\documentclass[11pt]{amsart}
\usepackage[margin=3cm]{geometry}
\usepackage{comment}
\usepackage{enumitem}
\usepackage{float} 

\setlist[enumerate]{leftmargin=*, label=\alph*)} 
\setlist[itemize]{leftmargin=*} 

\usepackage{amssymb,amsthm,amsfonts,amsmath}

\newtheoremstyle{example-style}{5pt}{0pt}{}{}{\scshape}{:}{.5em}{}

\newtheorem{Thm}{Theorem}

\newtheorem{lem}[Thm]{Lemma}
\newtheorem{cor}[Thm]{Corollary}
\newtheorem{prop}[Thm]{Proposition}

\theoremstyle{example-style}

\theoremstyle{definition}

\newtheorem{Remark}[Thm]{Remark}

\begin{document}
\date{June 24, 2016}

\title{Cyclotomic polynomials at roots of unity}
\author{Bart{\l}omiej Bzd\c{e}ga, Andr\'es Herrera-Poyatos and Pieter Moree}
\date{August 2016}

\maketitle
{\def\thefootnote{}
\footnote{{\it Mathematics Subject Classification (2000)}. 11N37, 11Y60}}

\begin{abstract} 
The $n^{th}$ cyclotomic polynomial 
$\Phi_n(x)$ is the minimal polynomial of an $n^{th}$ primitive
root of unity. 
Hence $\Phi_n(x)$ is trivially zero at primitive $n^{th}$ roots of unity.
Using finite Fourier analysis we derive 
a formula for $\Phi_n(x)$ at the other roots of unity. This allows
one to explicitly evaluate
$\Phi_n(e^{2\pi i/m})$ with $m\in \{3,4,5,6,8,10,12\}$.
We use this evaluation with $m=5$ to give a simple reproof of a result 
of Vaughan (1975) on 
the maximum coefficient (in absolute value) of $\Phi_n(x)$. We also obtain a formula for $\Phi_n'(e^{2\pi i/m}) / \Phi_n(e^{2\pi i/m})$ with $n \ne m$, which is effectively applied to $m \in \{3,4,6\}$. Furthermore, we
compute the resultant of two
cyclotomic polynomials in a novel very short way.
\end{abstract}

\section{Introduction}
The study of cyclotomic polynomials $\Phi_n$ has a long and venerable history\footnote[1]{Even involving poems, e.g.
I. Schur's proof of the irreducibility of $\Phi_n(x)$ set 
to rhyme \cite[pp. 38-41]{Cremer}.}. In this paper we
mainly focus on two aspects: values at roots of unity and heights.
These two aspects are related. In order to explain the connection
we have to recall the notion of height.
Let $f(x)=a_0+a_1x+a_2x^2+\ldots+a_dx^d$ 
be a polynomial of degree $d=\deg f$. Then 
its height $H(f)$ is defined as $H(f)=\max_{0\le j\le d}|a_j|$. 
Now if $z$ is on the unit circle, then for $n>1$
we obviously have 
\begin{equation}
\label{An}
A_n:=H(\Phi_n)\ge \frac{\sum_{0\le j\le d}|a_n(j)|}{d+1}
\ge \frac{|\Phi_n(z)|}{\varphi(n)+1}
\ge \frac{|\Phi_n(z)|}{n},
\end{equation}
where 
$\Phi_n(x)=\sum_{j=0}^da_n(j)x^j$ 
and $d=\deg \Phi_n = \varphi(n)$, with $\varphi$ Euler's totient function.
This inequality shows that if we can pinpoint any $z$ on 
the unit circle for which $|\Phi_n(z)|$
is large, then we can obtain a non-trivial lower bound for
$A_n$ (cf. Bzd\c{e}ga \cite{Bzdega}).\\
\indent 
In this paper we show that there is an infinite sequence of integers $n$ such
that $|\Phi_n(z_n)|$ is large, with $z_n$ an appropriately chosen
primitive fifth root of unity. It is easy to deduce 
(see the proof of Theorem \ref{BB-T-Vaughan}) that for this sequence
$\log\log A_n \ge (\log2+o(1)){\log n}/{\log\log n}$ as $n$ tends to
infinity, which reproves a result of Vaughan \cite{Vaughan}. The infinite sequence
is found using Theorem \ref{BB-T-values}, our main result.\\
\indent We evaluate $\Phi_n(e^{2\pi i/m})$ for $m\in \{1,2,3,4,5,6\}$ and
every $n\ge 1$ in, respectively, Lemmas \ref{valueat1A}, \ref{valueat-1}, \ref{phini}, 
\ref{phinz3}, \ref{phinz5} and \ref{phinz6}. For $m\in \{1,2\}$ these results are
folklore and we recapitulate them for the convenience of the reader. 
For $m\in \{3,4,6\}$ the results were obtained by Motose
\cite{Motose}, but they need some 
small corrections (for details see
the beginning of Section \ref{sec:low-order}). We reprove these results using a different method which has the
advantage of reducing the number of cases being considered. Using a computer algebra package
we verified our results for $n\le 5000$. We note that
the field $\mathbb Q(e^{2\pi i/m})$ is of degree at most $2$ 
if and only if $m\in \{1,2,3,4,6\}$.\\
\indent Our main result expresses 
$\Phi_n(\xi_m)$, with $\xi_m$ an 
arbitrary primitive $m^{th}$ root of unity, in terms of the set of Dirichlet characters modulo $m$. 
This result allows one to explicitly evaluate $\Phi_n(\xi_m)$ also for $m\in \{5,8,10,12\}$  (values of
$m$ not covered
in the literature so far). 
\begin{Thm} 
\label{BB-T-values}
Let $n,m > 1$ be coprime integers. 
By $G(m)$ we denote the multiplicative group modulo $m$ and by 
$\widehat{G}(m)=\text{Hom} 
((\mathbb Z/m\mathbb Z)^*,\mathbb C^*)$ the set of Dirichlet characters modulo $m$.
For all $\chi\in \widehat{G}(m)$ let
$$C_\chi(\xi_m) = \sum_{g\in G(m)}\overline{\chi}(g)\log(1-\xi_m^g),$$
where we take the logarithm with imaginary part in $(-\pi,\pi]$. Then
$$\Phi_n(\xi_m) = \exp\left(\frac1{\varphi(m)}\sum_{\chi\in\widehat{G}(m)}
C_\chi(\xi_m)\chi(n)\prod_{p\mid n}(1-\overline{\chi}(p))\right).$$
\end{Thm}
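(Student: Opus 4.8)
The plan is to start from the familiar Möbius-inversion identity $\Phi_n(x)=\prod_{d\mid n}(x^d-1)^{\mu(n/d)}$ ($\mu$ the Möbius function) and to specialise it at $x=\xi_m$. Since $\gcd(n,m)=1$, every divisor $d$ of $n$ is a unit modulo $m$, so $\xi_m^d\neq1$ and $1-\xi_m^d\neq0$; in fact $\mathrm{Re}(1-\xi_m^d)=1-\cos(2\pi d/m)>0$, so the principal logarithm of $1-\xi_m^d$ (imaginary part in $(-\pi,\pi]$) is unambiguous. Because $n>1$ we have $\sum_{d\mid n}\mu(n/d)=0$, whence $\prod_{d\mid n}(-1)^{\mu(n/d)}=1$ and the factors $x^d-1$ may be replaced by $1-\xi_m^d$ without changing the product. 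Using $\exp(k\log w)=w^k$ for $k\in\mathbb{Z}$, this yields
\[
\Phi_n(\xi_m)=\exp\!\Big(\sum_{d\mid n}\mu(n/d)\,\log(1-\xi_m^d)\Big).
\]

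The key move is then to notice that $\log(1-\xi_m^d)$ depends only on the residue $g=d\bmod m\in G(m)$, defining a function $L\colon G(m)\to\mathbb{C}$, and to expand $L$ by finite Fourier analysis on the finite abelian group $G(m)$. With $\widehat{L}(\chi)=\sum_{g\in G(m)}\overline{\chi}(g)L(g)=C_\chi(\xi_m)$, Fourier inversion gives $L(g)=\frac{1}{\varphi(m)}\sum_{\chi\in\widehat{G}(m)}C_\chi(\xi_m)\chi(g)$. Substituting this into the exponent and interchanging the two finite sums reduces the problem to evaluating the inner sum $S_\chi(n):=\sum_{d\mid n}\mu(n/d)\chi(d)=(\mu*\chi)(n)$. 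Being a Dirichlet convolution of multiplicative functions, $S_\chi$ is multiplicative; on a prime power $p^k$ (automatically with $p\nmid m$, since $p\mid n$) one computes $S_\chi(p^k)=\chi(p)^k-\chi(p)^{k-1}$. Multiplying over the prime powers $p^k\| n$ and using $\chi(p)\overline{\chi}(p)=1$ gives $S_\chi(n)=\chi(n)\prod_{p\mid n}(1-\overline{\chi}(p))$, and inserting this back into the exponent produces precisely the claimed formula.

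The only step demanding real care is the passage to logarithms at the outset: one must make sure that the branch fixed in the definition of $C_\chi(\xi_m)$ is used consistently throughout (so that the Fourier inversion is legitimate for one and the same function $L$), and that exponentiating at the end recovers $\Phi_n(\xi_m)$ exactly rather than merely up to a root of unity. This is exactly what the vanishing $\sum_{d\mid n}\mu(n/d)=0$ — equivalently, the cancellation of the spurious signs $\prod_{d\mid n}(-1)^{\mu(n/d)}=1$ — secures. Everything after that is bookkeeping with Dirichlet characters and multiplicative arithmetic functions, with no analytic input needed since all the sums involved are finite.
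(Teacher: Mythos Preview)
Your proof is correct and follows essentially the same route as the paper: M\"obius inversion for $\Phi_n$, finite Fourier expansion of $g\mapsto\log(1-\xi_m^g)$ on $G(m)$, and the multiplicative evaluation of $\sum_{d\mid n}\mu(n/d)\chi(d)$ (which the paper packages as $J_0(\chi;n)$ via Lemma~\ref{lem:flauwmultiplicatief}). If anything, you are slightly more careful than the paper, which merely notes that the exponent equals $\log\Phi_n(\xi_m)$ ``up to a multiple of $2\pi i$'' and then exponentiates, whereas you explicitly justify the exact equality via $\sum_{d\mid n}\mu(n/d)=0$ and $\exp(k\log w)=w^k$ for $k\in\mathbb Z$.
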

The theorem is especially easy to use if $\widehat{G}(m)$ only consists of the trivial
and quadratic characters. This occurs precisely if $(\mathbb Z/m\mathbb Z)^*$ is a 
direct product of cyclic groups of order two. It is elementary to classify those 
$m$ and one finds that $m\in \{1,2,3,4,6,8,12,24\}$.\\
\indent A variant of Theorem \ref{BB-T-values} for
$\Phi_n'(\xi_m)/\Phi_n(\xi_m)$ is also obtained (Theorem \ref{BB-T-derivative}). It is used to evaluate $\Phi_n'(\xi_m)/\Phi_n(\xi_m)$ for $m \in \{3,4,6\}$. \\
\indent Kronecker polynomials are monic products of cyclotomic polynomials and a monomial. For them some of our results can be applied (see Section \ref{kronappl}).\\
\indent A question related to computing $\Phi_n(\xi_m)$ is that of determining its degree as 
an algebraic integer. This was considered in extenso by Kurshan and
Odlyzko \cite{Kurshan}. Their work uses Gauss and Ramanujan sums, the non-vanishing of Dirichlet L-series
at $1$, and the construction of Dirichlet 
characters with special properties.

\section{Preliminaries} \label{preli}
We recall some
relevant material on cyclotomic fields as several of our 
results can be reformulated in terms of 
cyclotomic fields. Most books on algebraic number theory contain a chapter on cyclotomic fields, 
for the advanced theory see, e.g., Lang \cite{Lang2}. Furthermore we consider
elementary properties of self-reciprocal polynomials and the
(generalized) Jordan totient function.\\
\indent The results in Section \ref{selfie} and Lemma \ref{lem:flauwmultiplicatief} in 
Section \ref{gejo} are our own, but
given their elementary nature they have been quite likely observed before. The proof
of Theorem \ref{thm:apostol} is new.

\subsection{Important notation}
We write double exponents not
as $a^{{b^c}}$, but as $(a)^{\wedge} b^c$ in
those cases where we think it enhances the readability.

Throughout we use the letters $p$ and $q$ to denote primes.
For a natural number $n$ we will refer to the exponent of $p$ in the
prime factorization of $n$ by $\nu_p(n)$, i.e., 
$p^{\nu_p(n)} \parallel n$. \\
\indent A primitive $n^{th}$ root of unity is a complex number $z$
satisfying $z^n=1$, but not $z^d=1$ for any $d<n$.
We let $\xi_n$ denote any primitive $n^{th}$ root
of unity. It is of the form $\zeta_n^j$ with $1\le j\le n$,
$(j,n)=1$ and $\zeta_n=e^{2\pi i/n}$.

\subsection{Cyclotomic polynomials}
In this section we recall some material on cyclotomic polynomials
we will need later in the paper. For proofs see, e.g., 
Thangadurai \cite{Thanga}.\\
\indent A definition of the $n^{th}$ cyclotomic polynomial is
\begin{equation}
\label{definitie}
\Phi_n(x)=\prod_{1\le j\le n,~(j,n)=1}(x-\zeta_n^j)\in 
\mathbb C[x].
\end{equation}
It is monic of degree $\varphi(n)$, has integer coefficients and is irreducible over $\mathbb Q$.
In $\mathbb Q[x]$ we have the factorization into irreducibles
\begin{equation}
\label{facintoirr}
x^n-1=\prod_{d \mid n}\Phi_d(x).
\end{equation}
By M\"obius inversion we obtain from this that
\begin{equation}
\label{phimoebius}
\Phi_n(x)=\prod_{d \, \mid n}(x^d-1)^{\mu(n/d)},
\end{equation}
with $\mu$ the M\"obius function.\\
\indent Lemma \ref{basiceqs}
and Corollary \ref{corbasiceqs} summarize some further properties of 
$\Phi_n(x)$. 
\begin{lem} \label{basiceqs}
We have\\
{\rm a)} $\Phi_{pn}(x)=\Phi_n(x^p)$ if $p\mid n$;\\
{\rm b)}  $\Phi_{pn}(x)=\Phi_n(x^p)/\Phi_n(x)$ if $p\nmid n$;\\
{\rm c)}  $\Phi_n(x)=x^{\varphi(n)}\Phi_n(1/x)$ for $n > 1$.
\end{lem}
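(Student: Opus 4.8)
The plan is to obtain (a) and (b) directly from the Möbius product \eqref{phimoebius}, and (c) from the defining product \eqref{definitie} together with the observation that $z\mapsto 1/z$ permutes the primitive $n^{th}$ roots of unity.

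For (a), assume $p\mid n$ and write $n=p^am$ with $p\nmid m$ and $a\ge 1$, so $pn=p^{a+1}m$. By \eqref{phimoebius} we have $\Phi_n(x^p)=\prod_{d\mid n}(x^{pd}-1)^{\mu(n/d)}$ and $\Phi_{pn}(x)=\prod_{e\mid pn}(x^{e}-1)^{\mu(pn/e)}$. The key point is a short valuation count: a factor in the second product is non-trivial only when $pn/e$ is squarefree, and since $p^{a+1}\parallel pn$ this forces $\nu_p(e)\ge a\ge 1$, hence $p\mid e$. Writing $e=pd$ one checks $e\mid pn\iff d\mid n$ and $pn/e=n/d$, so after discarding the (trivial) $\mu$-zero factors the two products have exactly the same factors, giving $\Phi_{pn}(x)=\Phi_n(x^p)$. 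For (b), assume $p\nmid n$; then the divisors of $pn$ are precisely the numbers $d$ and $pd$ with $d\mid n$, and since $p\nmid n/d$ we get $\mu(pn/d)=-\mu(n/d)$. Splitting $\Phi_{pn}(x)=\prod_{d\mid pn}(x^{d}-1)^{\mu(pn/d)}$ along these two families and comparing with \eqref{phimoebius} for $\Phi_n(x)$ and $\Phi_n(x^p)$, the collected exponents yield $\Phi_{pn}(x)=\Phi_n(x^p)/\Phi_n(x)$. (Alternatively both identities can be proved by a root count, using that $\Phi_m$ is monic of degree $\varphi(m)$ with simple roots; but the Möbius route keeps the case analysis minimal.) The only step demanding any care is the bookkeeping of $p$-adic valuations in (a) needed to rule out stray factors; I expect this to be the main, though mild, obstacle.

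For (c), start from \eqref{definitie} and compute, for $n>1$,
$$x^{\varphi(n)}\Phi_n(1/x)=\prod_{\substack{1\le j\le n\\ (j,n)=1}}x\bigl(1/x-\zeta_n^j\bigr)=\prod_{\substack{1\le j\le n\\ (j,n)=1}}\bigl(1-\zeta_n^j x\bigr)=c\prod_{\substack{1\le j\le n\\ (j,n)=1}}\bigl(x-\zeta_n^{-j}\bigr),\qquad c=\prod_{\substack{1\le j\le n\\ (j,n)=1}}(-\zeta_n^{j}).$$
Since $j\mapsto -j$ permutes the residues coprime to $n$, the last product equals $\Phi_n(x)$ again, so it suffices to show $c=1$ when $n>1$. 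This follows from the standard identity $\sum_{1\le j\le n,\,(j,n)=1}j=n\varphi(n)/2$ (valid for $n\ge 2$), which gives $\prod_j \zeta_n^{j}=\zeta_n^{n\varphi(n)/2}=1$, combined with $(-1)^{\varphi(n)}=1$ for $n>2$ and a one-line check for $n=2$. A slicker alternative is to note that $\Phi_n(0)=1$ for $n>1$ (read off from \eqref{facintoirr} at $x=0$ by induction) and invoke the general fact that a monic polynomial $f$ with $f(0)=1$ whose root multiset is closed under inversion satisfies $x^{\deg f}f(1/x)=f(x)$.
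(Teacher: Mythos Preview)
Your proof is correct. The paper does not actually supply a proof of this lemma; it simply cites Thangadurai \cite{Thanga} as a reference for these standard facts. Your argument via the M\"obius product \eqref{phimoebius} for (a) and (b), and via the root product \eqref{definitie} together with the computation of $c=\prod_j(-\zeta_n^j)$ for (c), is entirely standard and carries through without gaps. The valuation bookkeeping in (a) is handled cleanly: once one observes that $p\nmid e$ forces $p^{a+1}\mid pn/e$ and hence $\mu(pn/e)=0$, the bijection $e=pd\leftrightarrow d$ between $\{e\mid pn:p\mid e\}$ and $\{d:d\mid n\}$ does the rest. The alternative you sketch for (c), reading off $\Phi_n(0)=1$ from \eqref{facintoirr} and using closure of the root set under inversion, is also valid and arguably the slickest route.
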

\begin{cor} 
\label{corbasiceqs}
We have\\
$$\Phi_n(-x)=\begin{cases}(-1)^{\varphi(n)}\Phi_{2n}(x) & \hbox{if } 2\nmid n;\\
(-1)^{\varphi(n)}\Phi_{n/2}(x) & \hbox{if }2 \parallel n;\\
\Phi_n(x) & \hbox{if }4 \mid n.\end{cases}$$
\end{cor}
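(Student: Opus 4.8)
The plan is to split the proof into the three cases singled out in the statement, treating the odd case first and deducing the other two from it (or from Lemma~\ref{basiceqs}). Suppose first that $2\nmid n$. The key point is that $z\mapsto -z$ is a bijection from the set of primitive $n^{th}$ roots of unity onto the set of primitive $(2n)^{th}$ roots of unity: it is clearly injective, and since $n$ is odd, an element $z$ of order $n$ has $-z$ of order $\operatorname{lcm}(2,n)=2n$, so the image is contained in the set of primitive $(2n)^{th}$ roots; as $\varphi(n)=\varphi(2n)$, a cardinality count shows the map is onto. Substituting $-x$ for $x$ in the defining product \eqref{definitie} and pulling one factor $-1$ out of each of the $\varphi(n)$ terms then gives
\[
\Phi_n(-x)=\prod_{\zeta}(-x-\zeta)=(-1)^{\varphi(n)}\prod_{\zeta}\bigl(x-(-\zeta)\bigr)=(-1)^{\varphi(n)}\Phi_{2n}(x),
\]
where $\zeta$ ranges over the primitive $n^{th}$ roots of unity and the last equality uses \eqref{definitie} together with the bijection above. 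This settles the first case.

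For the case $2\parallel n$, write $n=2m$ with $m$ odd; applying the case just proved to $m$ gives $\Phi_m(-x)=(-1)^{\varphi(m)}\Phi_{2m}(x)$, and replacing $x$ by $-x$, together with $\varphi(m)=\varphi(2m)=\varphi(n)$ and $m=n/2$, yields $\Phi_n(-x)=(-1)^{\varphi(n)}\Phi_{n/2}(x)$. For the case $4\mid n$, observe that $n/2$ is even, so part~a) of Lemma~\ref{basiceqs}, with $p=2$ applied to $n/2$ in place of $n$, gives $\Phi_n(x)=\Phi_{n/2}(x^2)$; hence $\Phi_n(-x)=\Phi_{n/2}\bigl((-x)^2\bigr)=\Phi_{n/2}(x^2)=\Phi_n(x)$.

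The only step requiring a little care is the surjectivity — equivalently the cardinality bookkeeping $\varphi(n)=\varphi(2n)$ — in the odd case; once that is in place, everything else is formal. It is also worth noting that the argument covers $n=1$: there $\varphi(1)=1$ and $\Phi_1(-x)=-(x+1)=(-1)^{\varphi(1)}\Phi_2(x)$, in agreement with the first case.
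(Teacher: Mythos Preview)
Your proof is correct. The paper itself does not supply a proof of this corollary, stating it as an immediate consequence of Lemma~\ref{basiceqs}; you have filled in the details cleanly. One minor remark: your first case relies on the defining product~\eqref{definitie} rather than on Lemma~\ref{basiceqs} directly, so strictly speaking it is not purely a ``corollary'' argument---but the bijection $z\mapsto -z$ is the standard and most transparent way to handle the odd case, and the remaining two cases do follow from the odd case together with Lemma~\ref{basiceqs}a), exactly as you wrote.
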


\subsection{Calculation of $\Phi_n(\pm 1)$}
\label{flauweevaluatie} 
The evaluation of $\Phi_n(1)$ is a classical result.
For completeness we formulate the result and give two proofs
of it, the first taken from Lang \cite[p. 74]{Lang}.

\begin{lem} 
\label{valueat1A}
We have
$$\Phi_n(1)=\begin{cases}0 & \hbox{if } n=1;\\
p & \hbox{if } n=p^e;\\
1 & \hbox{otherwise,}\end{cases}$$
with $p$ a prime number and $e\ge 1$.
\end{lem}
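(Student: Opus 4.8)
The plan is to evaluate $\Phi_n(1)$ directly from the product formula \eqref{phimoebius}. The immediate difficulty is that substituting $x=1$ into $\Phi_n(x)=\prod_{d\mid n}(x^d-1)^{\mu(n/d)}$ produces factors $(1^d-1)=0$, so the expression is an indeterminate $0^{\pm 1}$ product; one must take a limit. So first I would consider the auxiliary function $F(x)=(x^n-1)/(x-1)=1+x+\cdots+x^{n-1}$ for $n>1$, which satisfies $F(1)=n$ and, by \eqref{facintoirr}, factors as $F(x)=\prod_{d\mid n,\, d>1}\Phi_d(x)$. Evaluating at $x=1$ gives the key identity
\begin{equation}
\label{prodidentity}
\prod_{d\mid n,\, d>1}\Phi_d(1)=n.
\end{equation}

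Next I would extract $\Phi_n(1)$ from \eqref{prodidentity} by induction on the number of prime factors of $n$, or equivalently by a second application of the same identity to divisors of $n$. The base case $n=1$ is $\Phi_1(1)=1-1=0$, handled separately. For the prime power case $n=p^e$: the divisors $d>1$ of $p^e$ are exactly $p,p^2,\dots,p^e$, and an easy induction using \eqref{prodidentity} for each $n=p,p^2,\dots$ shows $\Phi_{p^j}(1)=p$ for every $j\ge 1$ (at each stage all earlier factors multiply to $p^{j-1}$, forcing the new factor to be $p$). Alternatively one notes $\Phi_{p^e}(x)=1+x^{p^{e-1}}+x^{2p^{e-1}}+\cdots+x^{(p-1)p^{e-1}}$ directly from \eqref{phimoebius}, whence $\Phi_{p^e}(1)=p$ with no induction at all — I would probably use this closed form to shorten the argument.

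For the remaining case, $n$ divisible by at least two distinct primes, I would argue that $\Phi_n(1)=1$ by strong induction on $n$. Take the identity \eqref{prodidentity} for $n$ and split the product over $d>1$ according to whether $d$ is a prime power or not: the prime-power divisors $d=p^j$ of $n$ contribute $\prod_{p\mid n}\prod_{j=1}^{\nu_p(n)}\Phi_{p^j}(1)=\prod_{p\mid n}p^{\nu_p(n)}=n$ by the prime-power case just established, while every non-prime-power divisor $d$ with $1<d<n$ contributes $\Phi_d(1)=1$ by the induction hypothesis. Hence \eqref{prodidentity} reads $n=n\cdot\Phi_n(1)\cdot 1$, giving $\Phi_n(1)=1$. (Here $\Phi_n(1)\neq 0$ since $1$ is not a primitive $n^{th}$ root of unity for $n>1$, so the division is legitimate.)

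The main obstacle is purely bookkeeping: handling the $0/0$ issue cleanly by passing to $F(x)=(x^n-1)/(x-1)$ before substituting, and then organizing the induction so that the prime-power divisors are peeled off correctly. Everything else is a routine consequence of \eqref{facintoirr} and \eqref{phimoebius}. (A second, independent proof — presumably the one attributed to Lang — would instead take $\log$ of \eqref{phimoebius}, differentiate, and analyze $\lim_{x\to 1}(x^d-1)/(x^n-1)$ via l'Hôpital, but the combinatorial argument above avoids analysis entirely.)
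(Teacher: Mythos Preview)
Your proof is correct and follows essentially the same route as the paper's: pass to $(x^n-1)/(x-1)$, evaluate at $1$ to get $\prod_{d\mid n,\,d>1}\Phi_d(1)=n$, handle prime powers first, and finish the composite case by strong induction. One small correction to your aside: the proof you give \emph{is} the one the paper attributes to Lang; the paper's second proof instead recognizes $\log\Phi_n(1)$ as the von Mangoldt function and appeals to $\sum_{d\mid n}\Lambda(d)=\log n$, not to logarithmic differentiation and l'H\^opital.
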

\begin{proof}
By \eqref{facintoirr} we have
\begin{equation}
\label{howtrivial!}
\frac{x^n-1}{x-1}=\prod_{d \mid n,~d>1}\Phi_d(x).
\end{equation}
Thus
\begin{equation}
\label{nisprod}
n=\prod_{d \mid n,~d>1}\Phi_d(1).
\end{equation}
We see that
$p=\Phi_p(1)$. 
Furthermore,
$p^f=\Phi_{p}(1)\Phi_{p^2}(1)\cdots \Phi_{p^f}(1)$. 
Hence, by induction
$\Phi_{p^f}(1)=p$. 
We infer that $\prod_{d\in {\mathcal Q},~d \mid n}\Phi_d(1)=n$,
where $\mathcal Q$ is the set of all prime
powers $>1$. Thus for the composite divisors $d$ of $n$, 
we have
$\Phi_d(1)=\pm 1$. 
Assume inductively that for $d \mid n$ and $d<n$ 
we have $\Phi_d(1)=1$. Then we see from our product that
$\Phi_n(1)=1$ too.
\end{proof}

The reader might recognize the von Mangoldt function 
$\Lambda$ in 
Lemma \ref{valueat1A}. Recall that the von Mangoldt function 
$\Lambda$ is defined as
$$\Lambda(n)=\begin{cases}\log p & \hbox{if } n=p^e,~e\ge 1;\\
0 & \hbox{otherwise.}\end{cases}$$
In terms of the von Mangoldt function we can reformulate
Lemma \ref{valueat1A} in the following way.
\begin{lem}
\label{valueat1B}
We have $\Phi_1(1)=0$. For $n>1$ we have
$\Phi_n(1)=e^{\Lambda(n)}$.
\end{lem}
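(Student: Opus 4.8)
The plan is to observe that Lemma \ref{valueat1B} is merely a restatement of Lemma \ref{valueat1A} in the language of the von Mangoldt function, so the proof amounts to checking that the two case distinctions match. First I would record the value $\Phi_1(1) = 0$, which is immediate from the definition $\Phi_1(x) = x-1$ (or from \eqref{definitie}), and which agrees with the first line of Lemma \ref{valueat1A}.

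For $n > 1$ I would invoke Lemma \ref{valueat1A} directly. If $n = p^e$ is a prime power with $e \ge 1$, then that lemma gives $\Phi_n(1) = p$; on the other hand $\Lambda(p^e) = \log p$ by the definition of the von Mangoldt function, so $e^{\Lambda(n)} = e^{\log p} = p$, and the two expressions coincide. If $n > 1$ is not a prime power, then Lemma \ref{valueat1A} gives $\Phi_n(1) = 1$, while $\Lambda(n) = 0$ by definition, so $e^{\Lambda(n)} = e^0 = 1$, and again the expressions agree. Since these cases exhaust all $n > 1$, this establishes $\Phi_n(1) = e^{\Lambda(n)}$ for every $n > 1$.

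There is essentially no obstacle here: the content is entirely carried by Lemma \ref{valueat1A}, and the only thing to verify is the elementary identity $e^{\Lambda(n)} = p$ exactly when $n$ is a prime power $p^e$ and $e^{\Lambda(n)} = 1$ otherwise, which is immediate from the definition of $\Lambda$. One could alternatively give a self-contained proof by repeating the argument of Lemma \ref{valueat1A} using \eqref{nisprod}, but reusing the already-proved statement is cleaner and is the approach I would take.
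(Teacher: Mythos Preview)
Your proof is correct, but it differs from the paper's own argument. You treat Lemma \ref{valueat1B} as a mere restatement of Lemma \ref{valueat1A} and verify the two case distinctions line up via the definition of $\Lambda$. The paper, by contrast, explicitly offers a \emph{reproof} in which the von Mangoldt function arises on its own: starting from the identity \eqref{nisprod}, it observes that M\"obius inversion determines the values $\Phi_m(1)$ uniquely for $m>1$, so it suffices to check that the candidate values $e^{\Lambda(m)}$ satisfy \eqref{nisprod}, i.e.\ that $\log n = \sum_{d\mid n}\Lambda(d)$, which is the classical elementary identity. Your route is shorter and entirely adequate as a deduction from Lemma \ref{valueat1A}; the paper's route is slightly more conceptual in that it bypasses the explicit casework of Lemma \ref{valueat1A} and ties $\Phi_n(1)$ directly to the standard number-theoretic identity for $\Lambda$.
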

We will give a reproof of this lemma in which the von Mangoldt
function arises naturally.
\begin{proof}[Proof of Lemma \ref{valueat1B}]
By M\"obius inversion the identity \eqref{nisprod} for all 
$n>1$ determines $\Phi_m(1)$ uniquely for all $m>1$. 
This means that it is enough to verify that
$\log n=\sum_{d \mid n,~d>1}\Lambda(d)$ for all $n>1$. Since
$\Lambda(1)=0$ it is enough to verify that
$\log n=\sum_{d \mid n}\Lambda(d)$ for all $n>1$. This is
a well known identity in elementary prime number theory.
\end{proof}
The Prime Number Theorem in the 
equivalent form $\sum_{n\le x}\Lambda(n)\sim x$
yields in combination with Lemma \ref{valueat1B} 
the following proposition.
\begin{prop}
The Prime Number Theorem is equivalent with the statement
that $$\sum_{2<n\le x}\log(\Phi_n(1))\sim x,~~x\rightarrow \infty.$$
\end{prop}
In a similar vein, Amoroso \cite{Amor}
considered a variant $h$ of the Mahler measure and established
that the estimate $h(\prod_{n\le x}\Phi_n)\ll x^{1/2+\epsilon}$ 
for every $\epsilon>0$ is equivalent
with the Riemann Hypothesis.

\subsection{Calculation of $\Phi_n(-1)$}
Once one has calculated 
$\Phi_n(1)$, the evaluation of $\Phi_n(-1)$ follows
on invoking Corollary \ref{corbasiceqs}.
\begin{lem} 
\label{valueat-1}
We have
\begin{equation*}
\Phi_n(-1)=\begin{cases}
-2 & \hbox{if } n=1; \\
0 & \hbox{if } n=2; \\
p & \hbox{if } n=2p^e ;\\
1 & \hbox{otherwise}.\end{cases}
\end{equation*}
with $p$ a prime number and $e\ge 1$.
\end{lem}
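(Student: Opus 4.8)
The plan is to deduce the evaluation of $\Phi_n(-1)$ from the already-established evaluation of $\Phi_n(1)$ (Lemma \ref{valueat1A}) by exploiting the reflection formulas in Corollary \ref{corbasiceqs}, which relate $\Phi_n(-x)$ to a cyclotomic polynomial of a related index. Setting $x=1$ turns $\Phi_n(-1)$ into (a sign times) a value $\Phi_{n'}(1)$ with $n'\in\{2n,\,n/2,\,n\}$ depending on the $2$-adic valuation of $n$, and then Lemma \ref{valueat1A} finishes the computation in each case.

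\begin{proof}
We distinguish cases according to $\nu_2(n)$, using Corollary \ref{corbasiceqs} with $x=1$.

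First suppose $n$ is odd. If $n=1$, then $\Phi_1(-1)=-1-1=-2$. If $n>1$ is odd, then by Corollary \ref{corbasiceqs} we have $\Phi_{2n}(1)=(-1)^{\varphi(n)}\Phi_n(-1)$; since $n>1$ is odd, $\varphi(n)$ is even, so $\Phi_n(-1)=\Phi_{2n}(1)$. Now $2n$ is a prime power if and only if $2n=2$, i.e. $n=1$, which is excluded here; hence by Lemma \ref{valueat1A}, $\Phi_{2n}(1)=1$, so $\Phi_n(-1)=1$. This accounts for all odd $n>1$, which fall under the "otherwise" case, as they should (note $2p^e$ with $p$ prime is always even).

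Next suppose $2\parallel n$, say $n=2k$ with $k$ odd. By Corollary \ref{corbasiceqs}, $\Phi_{n/2}(1)=\Phi_k(1)=(-1)^{\varphi(n)}\Phi_n(-1)$. Here $\varphi(n)=\varphi(2k)=\varphi(k)$, which is even unless $k=1$. If $k=1$ then $n=2$ and $\Phi_2(-1)=-1+1=0$, matching the claim. If $k>1$, then $\varphi(k)$ is even, so $\Phi_n(-1)=\Phi_k(1)$. By Lemma \ref{valueat1A}, $\Phi_k(1)=p$ if $k=p^e$ is a prime power (equivalently $n=2p^e$ with $p$ an odd prime), and $\Phi_k(1)=1$ otherwise; since $k$ is odd and $k>1$, the prime $p$ here is necessarily odd, and $n=2p^e$. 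This gives $\Phi_n(-1)=p$ when $n=2p^e$ with $p$ an odd prime, and $\Phi_n(-1)=1$ otherwise in this case.

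Finally suppose $4\mid n$. Then Corollary \ref{corbasiceqs} gives $\Phi_n(-x)=\Phi_n(x)$, so $\Phi_n(-1)=\Phi_n(1)$. Since $4\mid n$, $n$ is a prime power only when $n=2^e$ with $e\ge 2$, in which case Lemma \ref{valueat1A} gives $\Phi_n(1)=2$; otherwise $\Phi_n(1)=1$. The value $2$ occurs precisely for $n=2^e=2\cdot 2^{e-1}$, i.e. $n=2p^e$ with $p=2$; together with the odd-prime case above, this confirms that $\Phi_n(-1)=p$ exactly when $n=2p^e$ for some prime $p$ and $e\ge 1$, with $p=2$ corresponding to $n=2^{e+1}$, and $\Phi_n(-1)=1$ for all remaining $n$. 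Collecting the cases yields the stated formula.
\end{proof}

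The only point requiring care is the bookkeeping around the case $n=2p^e$ with $p=2$, which is hidden inside the "$4\mid n$" branch rather than the "$2\parallel n$" branch; once one checks that $\Phi_{2^{e+1}}(1)=2$ this is immediate, so no genuine obstacle arises — the proof is entirely a matter of correctly tracking the parity of $\varphi(n)$ and reindexing via Corollary \ref{corbasiceqs}.
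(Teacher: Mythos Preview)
Your proof is correct and follows exactly the route the paper indicates: reduce $\Phi_n(-1)$ to a value $\Phi_{n'}(1)$ via the reflection formulas of Corollary \ref{corbasiceqs}, then read off the answer from Lemma \ref{valueat1A}. The paper's own proof is the one-line remark that ``the evaluation of $\Phi_n(-1)$ follows on invoking Corollary \ref{corbasiceqs}'', and your write-up simply spells out that case analysis in full, including the correct handling of the $p=2$ case hidden in the $4\mid n$ branch.
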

\begin{Remark}
It is also possible to prove this lemma along the
lines of the proof of Lemma \ref{valueat1A}, see Motose 
\cite{Motose}.
\end{Remark}

\subsection{Cyclotomic fields}
\label{cyclfields}
Several of the results in this paper can be rephrased in terms
of cyclotomic fields. A field is said to be cyclotomic if it is of the form $\mathbb Q[x]/(\Phi_m(x))$ for some $m\ge 1$. It
is isomorphic to $\mathbb Q(\zeta_m)$ 
which is the one obtained by adjoining $\zeta_m$ to $\Bbb Q$. It
satisfies $\left[\mathbb{Q}(\zeta_m) : \mathbb{Q}\right] = \deg \Phi_m = \varphi(m)$ and has $\mathbb Z[\zeta_m]$ as its ring of integers.

A field automorphism $\sigma$ of $\mathbb{Q}(\zeta_m)$ is completely determined by the image of $\zeta_m$. This has to be 
root of unity of order $m$ and hence $\sigma(\zeta_m) = \zeta_m^j$ with $1 \le j \le m$ and $(j,m) = 1$. 
It follows that Gal$(\mathbb{Q}(\zeta_m) / \mathbb{Q}) \cong \left(\mathbb{Z} / m\mathbb{Z}\right)^*$ and that
the norm of an algebraic number $\alpha$ in $\mathbb Q(\zeta_m)$ satisfies
\begin{equation} \label{galnorm}
N_{\mathbb Q(\zeta_m)/\mathbb Q}(\alpha) = \prod_{1 \le j \le m, (j,m) = 1} \sigma_j(\alpha),
\end{equation}
where $\sigma_j$ denotes the 
automorphism that sends $\zeta_m$ to $\zeta_m^j$.
It also follows that $\Phi_m(x)$, the minimal polynomial of $\zeta_m$, satisfies \eqref{definitie}.\\
\indent Let  $(j, m) = 1$.  We have
$\Phi_n(\zeta_m^j) = \Phi_n(\sigma_j(\zeta_m)) = \sigma_j(\Phi_n(\zeta_m))$ and
so in order to compute $\Phi_n(\zeta_m^j)$ it is enough to compute $\Phi_n(\zeta_m)$. In particular if and only if one of the values $\Phi_n(\zeta_m^j)$ is rational, then all of them are equal.

Let $k$ be an integer. On combining \eqref{galnorm} and \eqref{definitie} we infer that
\begin{equation}
\label{normo}
N_{\mathbb{Q}(\zeta_m) / \mathbb{Q}}(k-\zeta_m)=\Phi_m(k).
\end{equation}
\indent The resultant of two monic polynomials $f$ and $g$
having roots $\alpha_1,\ldots,\alpha_k$, respectively
$\beta_1,\ldots,\beta_l$ is given by
$$\rho(f,g)=\prod_{i=1}^k \prod_{j=1}^l (\alpha_i-\beta_j)=\prod_{i=1}^kg(\alpha_i).$$
In particular it follows from \eqref{definitie} that
\begin{equation}
\label{result}
\rho(\Phi_m,\Phi_n)=\prod_{1\le j\le m,~(j,m)=1}\Phi_n(\zeta_m^j).
\end{equation}
E. Lehmer (1930) \cite{Lehmer}, Diederichsen (1940) \cite{Died}, Apostol (1970)  and Louboutin (1997) \cite{Louboutin} all
computed the resultant of 
cyclotomic polynomials (see also  Sivaramakrishnan \cite[Chapter X]{Siva}). 
More recently Dresden
(2012) \cite{Dresden} gave yet another proof.
Here we present a very short new proof.
\begin{Thm}
\label{thm:apostol}
If $n>m>1$, then
$$\rho(\Phi_n,\Phi_m)=\begin{cases}
p^{\varphi(m)} & \text{if~}n/m=p^k \text{~for some prime }p
\text{~and~} k\ge 1;\\
1 & \text{otherwise}.
\end{cases}
$$
\end{Thm}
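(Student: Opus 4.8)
The plan is to expand the resultant and reduce everything to the evaluation of $\Phi_N(1)$ supplied by Lemma \ref{valueat1B}. By \eqref{result} (with the roles of $m$ and $n$ interchanged),
$$\rho(\Phi_n,\Phi_m)=\prod_{1\le j\le n,\ (j,n)=1}\Phi_m(\zeta_n^j)=\prod_{\zeta}\Phi_m(\zeta),$$
the last product being taken over the primitive $n$th roots of unity $\zeta$. Substituting \eqref{phimoebius} for $\Phi_m$ and interchanging the two products yields
$$\rho(\Phi_n,\Phi_m)=\prod_{e\mid m}\Big(\prod_{\zeta}(\zeta^e-1)\Big)^{\mu(m/e)}.$$
What keeps this manageable is that every divisor $e$ of $m$ satisfies $e\le m<n$, so $\zeta^e\ne1$ and no factor ever vanishes.

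Next I evaluate the inner product. Writing $n_e=n/\gcd(n,e)$, the map $\zeta\mapsto\zeta^e$ carries the $\varphi(n)$ primitive $n$th roots of unity onto the primitive $n_e$th roots of unity and is exactly $\varphi(n)/\varphi(n_e)$-to-one; hence $\prod_{\zeta}(x-\zeta^e)=\Phi_{n_e}(x)^{\varphi(n)/\varphi(n_e)}$, and putting $x=1$ gives $\prod_{\zeta}(\zeta^e-1)=(-1)^{\varphi(n)}\Phi_{n_e}(1)^{\varphi(n)/\varphi(n_e)}$. Since $m>1$ we have $\sum_{e\mid m}\mu(m/e)=0$, so the powers of $-1$ cancel and
$$\rho(\Phi_n,\Phi_m)=\prod_{e\mid m}\Phi_{n/\gcd(n,e)}(1)^{\mu(m/e)\,\varphi(n)/\varphi(n/\gcd(n,e))}.$$
By Lemma \ref{valueat1B} each $\Phi_{n_e}(1)$ equals $1$ or a prime (and is never $0$, as $n_e>1$), so the right-hand side is a positive integer all of whose prime factors divide $n$. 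It therefore suffices, for each prime $p\mid n$, to compute the exponent of $p$, which by Lemma \ref{valueat1B} equals
$$\nu_p\!\big(\rho(\Phi_n,\Phi_m)\big)=\varphi(n)\sum_{\substack{e\mid m\\ n/\gcd(n,e)=p^{\,j},\ j\ge1}}\frac{\mu(m/e)}{\varphi(p^{\,j})}.$$

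The only substantial step is the evaluation of this arithmetic sum, which is bookkeeping with $p$-adic valuations. The condition $n/\gcd(n,e)=p^{\,j}$ amounts to $\nu_p(e)=\nu_p(n)-j$ together with $\nu_\ell(e)\ge\nu_\ell(n)$ for every prime $\ell\ne p$; since $e\mid m$, this forces $\nu_\ell(n)\le\nu_\ell(m)$ for all $\ell\ne p$, and because $n>m$ no two distinct primes can meet this requirement, so at most one $p$ contributes a nonzero sum. For that $p$, imposing in addition $\mu(m/e)\ne0$ confines each $\nu_\ell(e)$ to $\{\nu_\ell(m)-1,\nu_\ell(m)\}$; the constraints on distinct primes then decouple and the sum factors into local sums. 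At any $\ell\ne p$ with $\nu_\ell(n)<\nu_\ell(m)$ both admissible values of $\nu_\ell(e)$ occur with opposite signs, so that local factor is $0$; hence the sum vanishes unless $\nu_\ell(n)=\nu_\ell(m)$ for every $\ell\ne p$, i.e. unless $n/m=p^{\,k}$ for some $k\ge1$. In that case only the factor at $p$ remains, and evaluating the resulting one- or two-term sum — treating $p\nmid m$ (where $\varphi(n)=\varphi(p^{k})\varphi(m)$) and $p\mid m$ (where $\varphi(n)=p^{k}\varphi(m)$) separately — gives $\nu_p=\varphi(m)$. Thus $\rho(\Phi_n,\Phi_m)=p^{\varphi(m)}$ exactly when $n/m$ is a prime power $p^k$, and $\rho(\Phi_n,\Phi_m)=1$ otherwise. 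The main obstacle is precisely this last valuation computation; everything preceding it is formal.
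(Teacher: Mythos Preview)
Your proof is correct and takes a genuinely different route from the paper's. The paper fixes a prime $p$ with $\nu_p(n)>\nu_p(m)$, writes $n=Np^e$, $m=Mp^f$ with $p\nmid MN$, and uses the identity $\Phi_n(x)=\Phi_N(x^{p^e})/\Phi_N(x^{p^{e-1}})$ together with a CRT decomposition $j\equiv ap^f+bM\pmod m$ of the product $\prod_j\Phi_n(\zeta_m^j)$; for $M\ne N$ the product telescopes to $1$, while for $M=N$ L'H\^opital's rule is applied and the derivative factors cancel, leaving $p^{\varphi(m)}$. Your argument instead expands $\Phi_m$ via \eqref{phimoebius}, reduces everything to the known values $\Phi_{n/\gcd(n,e)}(1)=e^{\Lambda(n/\gcd(n,e))}$ from Lemma~\ref{valueat1B}, and then carries out a prime-by-prime valuation count. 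The paper's approach is slicker once one spots the telescoping, and it needs no case analysis beyond $M=N$ versus $M\ne N$; your approach is more mechanical and makes the von Mangoldt connection explicit, at the price of the $p$-adic bookkeeping you flag as the ``main obstacle''. In spirit your method is close to Apostol's original 1970 argument, whereas the paper's aim is precisely to give a short proof that avoids that arithmetic.
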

\begin{proof}

Assume that $n>m>1$. Then there exist a prime $p$
such that $\nu_p(n)>\nu_p(m)$. Put $n=Np^e$ and $m=Mp^f$
with $p\nmid M,N$. Obviously $e>f\ge0$.

Note that by the Chinese Remainder Theorem
every primitive residue $j$ modulo $m$
can be uniquely written as
$$j\equiv ap^f + bM \pmod{m},$$
where $a$ and $b$ are primitive residues
respectively modulo $M$ and $p^f$.
We will use this fact. In order to make the notation
shorter we will write $\prod_j$, $\prod_a$ and $\prod_b$
for the product over primitive residues
respectively modulo $m$, $M$ and $p^f$.

First we consider the case $M\neq N$. We have
\begin{align*}
\rho(\Phi_n,\Phi_m)
& = \prod_j\Phi_n(\zeta_m^j)
= \prod_j\frac{\Phi_N(\zeta_{Mp^f}^{jp^e})}
{\Phi_N(\zeta_{Mp^f}^{jp^{e-1}})}
= \prod_j\frac{\Phi_N(\zeta_M^{jp^{e-f}})}
{\Phi_N(\zeta_M^{jp^{e-f-1}})} \\
& = \prod_a\prod_b\frac{\Phi_N(\zeta_M^{ap^e+bMp^{e-f}})}
{\Phi_N(\zeta_M^{ap^{e-1}+bMp^{e-f-1}})}
= \left(\prod_a\frac{\Phi_N(\zeta_M^{ap^e})}
{\Phi_N(\zeta_M^{ap^{e-1}})}\right)^{\varphi(p^f)}
= 1.
\end{align*}

For $M=N$ we need to replace the quotients
by their limits. Using the L'H\^{o}pital rule
and the substitution $j\equiv ap^f + bM$, we obtain
$$\Phi_n(\zeta_m^j) = \lim_{z\to\zeta_m^j}
\frac{\Phi_N(z^{p^e})}{\Phi_N(z^{p^{e-1}})}
= p\zeta_M^{\varphi(p^e)}
\frac{\Phi_N'(\zeta_M^{ap^e})}
{\Phi_N'(\zeta_M^{ap^{e-1}})}.$$

After taking the product over all primitive
$a$ modulo $m$, the derivatives cancel out.
So for $M=N$ we have
$$\rho(\Phi_n,\Phi_m)
= \prod_a\prod_b(p\zeta_M^{a\varphi(p^e)})
= p^{\varphi(m)}\zeta_M^{\varphi(p^e)\varphi(p^f)\sum_aa}
= p^{\varphi(m)},$$
where we used that $M\mid\sum_aa$ for $M>2$. If $M=2$ then
$\zeta_M=-1$, $p>2$ and $\varphi(p^e)$ is even.
\end{proof}
\begin{cor}
\label{phiunit}
Let $n>m>1$. The algebraic integer $\Phi_n(\zeta_m)$ is not a unit in $\mathbb Z[\zeta_m]$ if and only
if $n/m$ is a prime power.
\end{cor}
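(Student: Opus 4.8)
The plan is to deduce Corollary \ref{phiunit} directly from Theorem \ref{thm:apostol} by way of the standard criterion that an algebraic integer in a number ring is a unit precisely when its field norm is $\pm 1$. Since $\mathbb{Z}[\zeta_m]$ is the full ring of integers of $\mathbb{Q}(\zeta_m)$, the algebraic integer $\Phi_n(\zeta_m)$ is a unit in $\mathbb{Z}[\zeta_m]$ if and only if $N_{\mathbb{Q}(\zeta_m)/\mathbb{Q}}(\Phi_n(\zeta_m)) = \pm 1$.

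The second step is to identify this norm with a resultant. Because $\Phi_n$ has rational coefficients, each automorphism $\sigma_j$ of $\mathbb{Q}(\zeta_m)$ satisfies $\sigma_j(\Phi_n(\zeta_m)) = \Phi_n(\sigma_j(\zeta_m)) = \Phi_n(\zeta_m^j)$, so \eqref{galnorm} gives
$$N_{\mathbb{Q}(\zeta_m)/\mathbb{Q}}(\Phi_n(\zeta_m)) = \prod_{1 \le j \le m,\ (j,m)=1} \Phi_n(\zeta_m^j),$$
and this last product is exactly $\rho(\Phi_m,\Phi_n)$ by \eqref{result}. Since $\rho(\Phi_m,\Phi_n)$ and $\rho(\Phi_n,\Phi_m)$ differ only by the factor $(-1)^{\varphi(n)\varphi(m)}$, their absolute values coincide, so $\Phi_n(\zeta_m)$ is a unit if and only if $|\rho(\Phi_n,\Phi_m)| = 1$.

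The final step is to read off the answer from Theorem \ref{thm:apostol}: when $n/m = p^k$ for a prime $p$ and $k \ge 1$ one has $|\rho(\Phi_n,\Phi_m)| = p^{\varphi(m)}$, which is $> 1$ because $\varphi(m) \ge 1$ for $m > 1$; in every other case (including the case $n/m \notin \mathbb{Z}$, which falls under ``otherwise'') the resultant has absolute value $1$. Hence $\Phi_n(\zeta_m)$ fails to be a unit exactly when $n/m$ is a prime power, as claimed.

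There is essentially no obstacle here beyond careful bookkeeping: one must keep track of the orientation of the resultant and the sign coming from the Galois conjugates, but since only the dichotomy $|\rho| = 1$ versus $|\rho| > 1$ is used, these sign issues are irrelevant. The entire content of the corollary is packaged in Theorem \ref{thm:apostol} together with the norm–unit criterion.
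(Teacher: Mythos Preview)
Your proof is correct and is exactly the argument the paper intends: the corollary is stated immediately after Theorem \ref{thm:apostol} with no separate proof, so the reader is expected to supply precisely the norm--resultant identification via \eqref{galnorm} and \eqref{result} together with the unit criterion, which is what you have done.
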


\subsection{Self-reciprocal polynomials}
\label{selfie}
A polynomial $f$ of degree $d$ is said to be self-reciprocal if
$f(x)=x^df(1/x)$. If $f(x)=-x^df(1/x)$, then $f$ is said to be anti-self-reciprocal. Lemma \ref{basiceqs}c says 
that $\Phi_n$ is self-reciprocal for $n\ge2$. Note that $\Phi_1$ is
anti-self-reciprocal.

\begin{lem} \label{selfreciprocal-evaluation}
Let $f \in \mathbb{R}[x]$ be a self-reciprocal polynomial. Then for $|z|=1$ we have
$$f(z) = \pm |f(z)| z^{\frac{\deg f}{2}}.$$
If $f \in \mathbb{R}[x]$ is an anti-self-reciprocal polynomial, then for $|z|=1$ we have
$$f(z) = \pm i|f(z)| z^{\frac{\deg f}{2}}.$$
\end{lem}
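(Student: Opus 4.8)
The plan is to exploit the factorization of a real self-reciprocal polynomial into conjugate pairs of roots together with a possible factor of $(x-1)$ or $(x+1)$, and then compute $|f(z)|$ directly for $|z|=1$. First I would write $f(x) = a_d \prod_{k}(x-\alpha_k)$ where $a_d$ is the leading coefficient, which is real. The self-reciprocal condition $f(x) = x^d f(1/x)$ forces the multiset of roots to be invariant under $\alpha \mapsto 1/\alpha$; combined with the fact that $f$ has real coefficients (so the root multiset is also invariant under complex conjugation), the roots split into blocks of the form $\{\alpha, 1/\alpha, \overline{\alpha}, 1/\overline{\alpha}\}$, together with possible roots on the unit circle or on the real axis, which require separate bookkeeping. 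The cleanest route, however, avoids case analysis on root types: I would simply verify that $g(z) := f(z) z^{-d/2}$ is real for $|z|=1$, and then $f(z) = g(z) z^{d/2}$ with $g(z) = \pm|g(z)| = \pm|f(z)|$ (since $|z^{-d/2}|=1$), which is exactly the claim.

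So the core computation is: for $|z|=1$, show $f(z)\,z^{-d/2} \in \mathbb{R}$. Write $z^{-1} = \overline{z}$. Then, using $f \in \mathbb{R}[x]$ and hence $\overline{f(z)} = f(\overline z)$, and using self-reciprocity $f(\overline z) = f(1/z) = z^{-d} f(z)$, we get
\begin{equation*}
\overline{f(z)\,z^{-d/2}} = \overline{f(z)}\,\overline{z}^{-d/2} = f(\overline z)\, z^{d/2} = z^{-d}f(z)\, z^{d/2} = f(z)\, z^{-d/2},
\end{equation*}
so $f(z)z^{-d/2}$ equals its own conjugate and is therefore real. A real number is $\pm$ its absolute value, and $|f(z)z^{-d/2}| = |f(z)|$ since $|z|=1$, giving $f(z) = \pm|f(z)|\,z^{d/2}$. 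For the anti-self-reciprocal case $f(x) = -x^d f(1/x)$, the identical manipulation yields $\overline{f(z)z^{-d/2}} = -f(z)z^{-d/2}$, so $f(z)z^{-d/2}$ is purely imaginary, hence equals $\pm i$ times its modulus $|f(z)|$, which is the second assertion. One should note that when $d$ is odd, $z^{d/2}$ denotes a fixed choice of square root times an integer power of $z$; the sign ambiguity absorbs the choice, so the statement is unaffected, but it is worth a remark that the $\pm$ (and $\pm i$) may depend on $z$.

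The only mild subtlety — and the closest thing to an obstacle — is the fractional exponent $z^{d/2}$ when $\deg f$ is odd: $z^{1/2}$ is two-valued on the unit circle, so one must either fix a branch or, more honestly, interpret the identity as holding for either choice with the sign $\pm$ depending on that choice and on $z$. Since the lemma is applied in the sequel only to the real polynomials $\Phi_n$ (self-reciprocal, even degree $\varphi(n)$ for $n\ge 3$) and to $\Phi_1(x) = x-1$ and $\Phi_2(x)=x+1$ (degree one), this ambiguity is harmless and can be disposed of in one sentence. No deep input is needed; the proof is essentially the three-line conjugation computation above.
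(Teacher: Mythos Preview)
Your proof is correct and essentially the same as the paper's: both derive the key identity $f(z)=z^d\,\overline{f(z)}$ from self-reciprocity together with $\overline{f(z)}=f(\overline z)=f(1/z)$ for $|z|=1$, and then extract the conclusion. The only cosmetic difference is that the paper multiplies both sides by $f(z)$ to get $f(z)^2=z^d|f(z)|^2$ and takes a square root, whereas you divide by $z^{d/2}$ and observe the quotient is real; your remark on the branch of $z^{d/2}$ for odd $d$ is a useful clarification that the paper leaves implicit.
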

\begin{proof}
Let $d=\deg f$. If $f$ is self-reciprocal and $|z|=1$ we have $f(z)=z^df(1/z)=z^d\overline{f(z)}$. Multiplying both sides by $f(z)$ and taking the square root we obtain the first claim. \\
\indent If $f$ is anti-self-reciprocal and $|z|=1$  we 
have  $f(z) = -z^df(1/z)=-z^d\overline{f(z)}$ and the proof is analogous.
\end{proof}

The behaviour of a self-reciprocal $f$ and its first 
derivative at $\pm 1$ is easily determined.
\begin{prop} \label{self-reciprocal:deriv}
Let $f$ be a polynomial of degree $d\ge 1$.\\
Suppose that $f$ is self-reciprocal.\\
{\rm a)} We have $f'(1) = f(1) d / 2$;\\
{\rm b)} If $2\nmid d$, then $f(-1)=0$. If $2\mid d$, then $f'(-1)=-f(-1)d/2$.\\
Suppose that $f$ is anti-self-reciprocal.\\
{\rm a)} We have $f(1) =0$;\\
{\rm b)} If $2\mid d$, then $f(-1)=0$. If $2\nmid d$, then $f'(-1)=-f(-1)d/2$.
\end{prop}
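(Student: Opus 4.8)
The plan is to simply differentiate the defining functional equation and then evaluate at $x=\pm1$. Write the self-reciprocity condition as $f(x)=x^d f(1/x)$ and the anti-self-reciprocity condition as $f(x)=-x^d f(1/x)$. Differentiating with respect to $x$, using the product rule together with $\frac{d}{dx}f(1/x)=-x^{-2}f'(1/x)$, one gets $f'(x)=dx^{d-1}f(1/x)-x^{d-2}f'(1/x)$ in the self-reciprocal case, and the same identity with an overall minus sign in the anti-self-reciprocal case.

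For the assertions marked a), substitute $x=1$. In the self-reciprocal case the differentiated identity becomes $f'(1)=df(1)-f'(1)$, which rearranges to $f'(1)=f(1)d/2$. In the anti-self-reciprocal case, substituting $x=1$ directly into $f(x)=-x^d f(1/x)$ already gives $f(1)=-f(1)$, hence $f(1)=0$.

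For the assertions marked b), I would split according to the parity of $d$, because the power $(-1)^d$ appearing in $f(-1)=\pm(-1)^d f(-1)$ is what decides whether the functional equation itself carries information at $x=-1$. Precisely: when $d$ is odd in the self-reciprocal case (resp. $d$ is even in the anti-self-reciprocal case), plugging $x=-1$ into the functional equation yields $f(-1)=-f(-1)$, so $f(-1)=0$. In the complementary parity the functional equation is vacuous at $-1$, so instead I would substitute $x=-1$ into the differentiated identity; keeping track of $(-1)^{d-1}$ and $(-1)^{d-2}$, in both remaining cases this collapses to $f'(-1)=-df(-1)-f'(-1)$, i.e. $f'(-1)=-f(-1)d/2$.

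The only thing that requires any care is the bookkeeping of the signs $(-1)^d,(-1)^{d-1},(-1)^{d-2}$; there is no genuine obstacle. If one wants to avoid a fourfold case split, an alternative is to treat both functional equations uniformly as $f(x)=\varepsilon x^d f(1/x)$ with $\varepsilon\in\{\pm1\}$, carry $\varepsilon$ and the parity of $d$ as parameters through the two evaluations at $x=1$ and $x=-1$, and read off all eight conclusions at the end.
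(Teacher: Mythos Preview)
Your proposal is correct and is exactly the approach the paper takes: differentiate the defining identity $f(x)=\pm x^d f(1/x)$ and evaluate at $x=\pm 1$. The paper's own proof is in fact a one-sentence summary of precisely what you wrote out in detail.
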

\begin{proof}
If $f$ is self-reciprocal, then $f(z)=z^df(1/z)$. If $f$ is
anti-self-reciprocal we have $f(z)=-z^df(1/z)$. Differentiating both sides and
substituting $z=\pm 1$ gives 
the result.
\end{proof}
The next result concerns the behaviour 
of self-reciprocal polynomials in roots of unity 
other than $\pm 1$.
\begin{lem} \label{selfreciprocal-int}
Let $f \in \mathbb{Z}[x]$ be a self-reciprocal polynomial of even degree $d$ and $m \in \{3,4,6\}$. Then $\xi_m^{-d/2}f(\xi_m)$ is an integer.
\end{lem}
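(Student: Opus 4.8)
The plan is to use the preceding Lemma \ref{selfreciprocal-evaluation} to control the modulus of $f(\xi_m)$ and to use the $m$-th cyclotomic field structure to control its argument. By Lemma \ref{selfreciprocal-evaluation}, since $f$ is self-reciprocal of even degree $d$ and $|\xi_m| = 1$, we have $\xi_m^{-d/2} f(\xi_m) = \pm |f(\xi_m)|$, which is in particular a \emph{real} number. So the quantity $\alpha := \xi_m^{-d/2} f(\xi_m)$ is real. On the other hand, $\alpha$ is an algebraic integer: $f$ has integer coefficients, so $f(\xi_m) \in \mathbb{Z}[\zeta_m]$, and $\xi_m^{-d/2} = \xi_m^{m - d/2} \in \mathbb{Z}[\zeta_m]$ provided $d/2$ is taken mod $m$; in any case $\xi_m^{-d/2}$ is a root of unity hence an algebraic integer, so $\alpha \in \mathbb{Z}[\zeta_m]$ is an algebraic integer.

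Now I would invoke the fact, recalled in Section \ref{cyclfields}, that for $m \in \{3,4,6\}$ the field $\mathbb{Q}(\zeta_m)$ has degree $\varphi(m) = 2$ over $\mathbb{Q}$, and that $\mathbb{Z}[\zeta_m]$ is its ring of integers. The rational numbers inside $\mathbb{Z}[\zeta_m]$ are exactly $\mathbb{Z}$ (the ring of integers of $\mathbb{Q}(\zeta_m)$ intersected with $\mathbb{Q}$ is $\mathbb{Z}$, since $\mathbb{Z}$ is integrally closed). So it suffices to show $\alpha \in \mathbb{Q}$, i.e.\ that $\alpha$ is fixed by the nontrivial automorphism of $\mathbb{Q}(\zeta_m)$. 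For $m \in \{3,4,6\}$ the nontrivial automorphism is complex conjugation (it sends $\zeta_m \mapsto \zeta_m^{-1} = \overline{\zeta_m}$, which is the only choice besides the identity). Since $\alpha$ is real, $\overline{\alpha} = \alpha$, so $\alpha$ is Galois-invariant, hence $\alpha \in \mathbb{Q}$, hence $\alpha \in \mathbb{Z}$.

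There is a small gap to patch in the above: I claimed $\xi_m^{-d/2} f(\xi_m)$ is real by Lemma \ref{selfreciprocal-evaluation}, but one should double-check that $\xi_m^{-d/2}$ is well-defined and unambiguous — since $d$ is even, $d/2$ is an integer, so $\xi_m^{-d/2}$ is a genuine power of $\xi_m$ and the statement of Lemma \ref{selfreciprocal-evaluation} (with $z = \xi_m$) directly gives $f(\xi_m) = \pm|f(\xi_m)|\,\xi_m^{d/2}$, so $\xi_m^{-d/2} f(\xi_m) = \pm |f(\xi_m)| \in \mathbb{R}$. That is the only subtle point; everything else is a direct application of the quadratic field structure.

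The main obstacle, such as it is, is really just making sure the two ingredients are combined cleanly: realness from self-reciprocity plus Lemma \ref{selfreciprocal-evaluation}, and ``real algebraic integer in a quadratic field $\Rightarrow$ rational integer'' from the Galois action being complex conjugation. Neither step is hard; the delicate aspect is that the argument uses the specific feature of $m \in \{3,4,6\}$ that the nontrivial automorphism coincides with complex conjugation — this is exactly where these three values of $m$ (together with $1,2$) are special, and it is why the restriction $m \in \{3,4,6\}$ appears in the hypothesis.
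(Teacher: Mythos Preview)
Your proof is correct and follows essentially the same idea as the paper's: both arguments show that $\alpha = \xi_m^{-d/2}f(\xi_m)$ is fixed by the nontrivial automorphism $\xi_m \mapsto \xi_m^{-1}$ of the quadratic field $\mathbb{Q}(\xi_m)$, hence lies in $\mathbb{Z}$. You package this as ``$\alpha$ is real (via Lemma~\ref{selfreciprocal-evaluation}) and the nontrivial automorphism is complex conjugation,'' whereas the paper writes $\alpha = a + b\xi_m$ and verifies $\overline{\alpha} = \alpha$ directly from the self-reciprocity identity $\xi_m^{d/2}f(\xi_m^{-1}) = \xi_m^{-d/2}f(\xi_m)$, concluding $b=0$; these are the same computation viewed at different levels of abstraction.
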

\begin{proof}
For any $m$ with $\varphi(m)=2$ the field
$\mathbb Q(\xi_m)$ is quadratic. Hence we
can write $\xi_m^{-d/2}f(\xi_m)=a+b\xi_m$ with $a$
and $b$ integers. Since by assumption $f$ is self-reciprocal we have
$a+b\xi_m^{-1}=\xi_m^{d/2}f(\xi_m^{-1})
=\xi_m^{-d/2}f(\xi_m)=a+b\xi_m$. Hence $b=0$ and
the result follows.
\end{proof}

\subsection{The (generalized) Jordan totient function}
\label{gejo}
Let $k\ge 1$ be an integer. The $k^{th}$ Jordan totient
function is defined by
$$J_k(n)=\sum_{d\, \mid n}\mu(\frac{n}{d})d^k.$$
As $J_k$ is a Dirichlet convolution of multiplicative functions, it is itself multiplicative. One has
$$J_k(n)=n^k\prod_{p\, \mid n}(1-\frac{1}{p^k}).$$
\indent Given a character $\chi$ and an integer
$k\ge 0$ we define
\begin{equation}
\label{eq:phianalogue}
J_k(\chi;n)=\sum_{d\, \mid n}\mu(\frac{n}{d})d^k\chi(d).
\end{equation}
Since $J_k(\chi;\cdot)$ is a Dirichlet convolution
of multiplicative functions, it is 
a multiplicative function itself. The next lemma demonstrates that 
it is an analogue of the Jordan totient function. Recall
that $\mathrm{rad}(n)=\prod_{p\, \mid n}p$ is the radical, sometimes
also called the squarefree kernel, of $n$.
\begin{lem}
\label{lem:flauwmultiplicatief}
Let $\chi$ be a character modulo $m$ and $k\ge 0$ an 
integer. We have
$$J_k(\chi;n)=\prod_{p^e \parallel n}p^{k(e-1)}\chi(p^{e-1})(p^k\chi(p)-1)=\left(\frac{n}{\mathrm{rad}(n)}\right)^k
\chi\left(\frac{n}{\mathrm{rad}(n)}\right)\prod_{p\, \mid n}(p^k\chi(p)-1).$$
If $n$ is squarefree, then 
$J_k(\chi;n)=\prod_{p\, \mid n}(p^k\chi(p)-1)$. 
If  $(m,n)=1$, then
$$J_k(\chi;n)=\chi(n)n^k\prod_{p \, \mid n}\left(1-\frac{\overline{\chi}(p)}{p^k}\right).$$
\end{lem}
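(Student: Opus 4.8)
The plan is to prove the multiplicative identity $J_k(\chi;n)=\prod_{p^e\parallel n}p^{k(e-1)}\chi(p^{e-1})(p^k\chi(p)-1)$ by first reducing to prime powers. Since $J_k(\chi;\cdot)$ was already noted to be multiplicative (being a Dirichlet convolution of the multiplicative functions $\mu$ and $d\mapsto d^k\chi(d)$), it suffices to evaluate it at $n=p^e$ with $e\ge 1$ and check that the claimed product is likewise multiplicative. For the latter, observe that each local factor $p^{k(e-1)}\chi(p^{e-1})(p^k\chi(p)-1)$ depends only on the prime power $p^e\parallel n$, so the product over $p^e\parallel n$ is visibly multiplicative.

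The core computation is then the prime-power case. Directly from the definition \eqref{eq:phianalogue}, the divisors of $p^e$ are $1,p,\dots,p^e$, and $\mu(p^e/d)$ vanishes unless $p^e/d\in\{1,p\}$, i.e.\ unless $d\in\{p^e,p^{e-1}\}$. Hence
\[
J_k(\chi;p^e)=\mu(1)\,p^{ek}\chi(p^e)+\mu(p)\,p^{(e-1)k}\chi(p^{e-1})
=p^{(e-1)k}\chi(p^{e-1})\bigl(p^k\chi(p)-1\bigr),
\]
which is exactly the asserted local factor. Multiplying over all $p^e\parallel n$ gives the first displayed equality. The second equality is then just bookkeeping: $\prod_{p^e\parallel n}p^{k(e-1)}=(n/\mathrm{rad}(n))^k$, and similarly $\prod_{p^e\parallel n}\chi(p^{e-1})=\chi\bigl(\prod_{p^e\parallel n}p^{e-1}\bigr)=\chi(n/\mathrm{rad}(n))$ since $\chi$ is completely multiplicative, while $\prod_{p^e\parallel n}(p^k\chi(p)-1)=\prod_{p\mid n}(p^k\chi(p)-1)$.

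The two corollaries are immediate specializations. If $n$ is squarefree then $n/\mathrm{rad}(n)=1$, so the middle factor disappears and $J_k(\chi;n)=\prod_{p\mid n}(p^k\chi(p)-1)$. If $(m,n)=1$, then every prime $p\mid n$ is coprime to $m$, so $\chi(p)\overline{\chi}(p)=1$ and $\chi(p)\ne 0$; writing $p^k\chi(p)-1=\chi(p)p^k\bigl(1-\overline{\chi}(p)/p^k\bigr)$ and collecting, $\prod_{p\mid n}\chi(p)=\chi(\mathrm{rad}(n))$ and $\prod_{p\mid n}p^k=\mathrm{rad}(n)^k$, so together with the factor $(n/\mathrm{rad}(n))^k\chi(n/\mathrm{rad}(n))$ we recover $\chi(n)\,n^k\prod_{p\mid n}(1-\overline{\chi}(p)/p^k)$.

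I do not anticipate a genuine obstacle here; the only point requiring a word of care is the manipulation of $\chi$ on prime-power arguments, where one must use that a Dirichlet character is completely multiplicative (so $\chi(p^{e-1})=\chi(p)^{e-1}$ and $\chi$ of a product equals the product of the $\chi$-values), and that $\chi(p)=0$ precisely when $p\mid m$ — which is why the clean final form is stated only under the hypothesis $(m,n)=1$.
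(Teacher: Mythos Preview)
Your argument is correct and is precisely the ``usual argument from the elementary theory of arithmetic functions'' that the paper invokes without spelling out: reduce to prime powers via multiplicativity, compute the two surviving terms in the M\"obius sum at $p^e$, and then regroup. There is nothing to add.
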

\begin{proof}
The proof follows by the usual arguments from the elementary theory
of arithmetic functions.
\end{proof}

\section{Cyclotomic values in arbitrary roots of unity} \label{sec:arbitrary-roots}

Let us consider two positive integers $n,m$ with $n > 1$ and $m \ge 1$. In this section we present general facts about the value $\Phi_n(\xi_m)$. Clearly $\Phi_n(\xi_m) = 0$ if and only if $n = m$. Hence we study the case $n \ne m$.\\
\indent The next result is due to Kurshan and Odlyzko \cite[Corollary 2.3]{Kurshan}.
We give a simpler reproof of it (suggested to
us by Peter Stevenhagen).
\begin{lem} \label{lem:is-real}
Let $n\ge 2$. The cyclotomic value
$\Phi_n(\xi_m)$ is non-zero and real if and only if $m \mid \varphi(n)$.
\end{lem}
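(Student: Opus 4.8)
The plan is to exploit the Galois action on $\mathbb{Q}(\zeta_m)$ together with the self-reciprocity of $\Phi_n$. First I would observe that $\Phi_n(\xi_m) \ne 0$ is immediate since $n \ne m$ (as $m \mid \varphi(n)$ forces $m \le \varphi(n) < n$, so the cases are genuinely distinct), using \eqref{definitie}. So the content is the equivalence ``real $\iff m \mid \varphi(n)$''. The key structural input is that $\Phi_n$ is self-reciprocal of degree $\varphi(n)$ (Lemma \ref{basiceqs}c), so by Lemma \ref{selfreciprocal-evaluation} applied with $z = \xi_m = \zeta_m^j$ on the unit circle we have $\Phi_n(\xi_m) = \pm|\Phi_n(\xi_m)|\,\zeta_m^{j\varphi(n)/2}$. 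When $\varphi(n)$ is even this reads $\Phi_n(\xi_m) = \pm|\Phi_n(\xi_m)|\,(\zeta_m^{\varphi(n)/2})^{j}$; when $\varphi(n)$ is odd one should note $n \in \{1,2\}$, but $n \ge 2$ and $\Phi_2(\xi_m) = \xi_m + 1$, which is real precisely when $\xi_m = \pm 1$, i.e. $m \in \{1,2\}$, matching $m \mid \varphi(2) = 1$ only for $m=1$ — so I would handle the parity carefully or simply note $\varphi(n)$ is even for $n \ge 3$ and treat $n=2$ separately.

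Granting $\varphi(n)$ even, write $\eta = \zeta_m^{\varphi(n)/2}$, an $m/\gcd(m,\varphi(n)/2)$-th root of unity. Then $\Phi_n(\xi_m)$ is a positive or negative real multiple of $\eta^{j}$ for every $j$ coprime to $m$ (here $\xi_m = \zeta_m^j$), and whether it is real depends on whether $\eta^j$ is real. For the ``if'' direction: if $m \mid \varphi(n)$ then $\varphi(n)/2 \cdot j$ can be congruent to $0$ modulo $m$ when $m$ is odd, or modulo $m$ one gets $\eta^j = \pm 1$; more cleanly, $m \mid \varphi(n)$ means $m/2 \mid \varphi(n)/2$ (if $m$ even) or $m \mid \varphi(n)/2$ is not quite it — I would instead argue via the Galois descent: $\Phi_n(\xi_m)$ is real iff it is fixed by complex conjugation $\sigma_{-1}$, and iff it lies in the maximal real subfield $\mathbb{Q}(\zeta_m + \zeta_m^{-1})$. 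Since $\Phi_n(\zeta_m) = \pm|\Phi_n(\zeta_m)|\zeta_m^{\varphi(n)/2}$ and $|\Phi_n(\zeta_m)|$ is a positive real algebraic number fixed by all of $\mathrm{Gal}$, the value is fixed by $\sigma_{-1}$ iff $\zeta_m^{\varphi(n)/2} = \zeta_m^{-\varphi(n)/2}$, i.e. iff $m \mid \varphi(n)$. That is the crux.

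So the heart of the argument is the clean chain: $\Phi_n(\xi_m)$ real $\iff$ $\sigma_{-1}$ fixes $\Phi_n(\zeta_m)$ $\iff$ (using $\Phi_n(\zeta_m) = \varepsilon\,|\Phi_n(\zeta_m)|\,\zeta_m^{\varphi(n)/2}$ with $\varepsilon = \pm1$, and $\sigma_{-1}$ fixing $\varepsilon$ and $|\Phi_n(\zeta_m)|$) $\iff$ $\zeta_m^{2\cdot\varphi(n)/2} = 1$ $\iff$ $m \mid \varphi(n)$. The one subtlety to get right is justifying that $\sigma_{-1}$ fixes $|\Phi_n(\zeta_m)|$: indeed $|\Phi_n(\zeta_m)|^2 = \Phi_n(\zeta_m)\overline{\Phi_n(\zeta_m)} = \Phi_n(\zeta_m)\sigma_{-1}(\Phi_n(\zeta_m))$ is visibly $\sigma_{-1}$-invariant, hence so is its positive square root (a real number). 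And that $\varepsilon$ is $\sigma_{-1}$-fixed since $\varepsilon \in \{\pm1\} \subset \mathbb{Q}$. I expect the main obstacle to be purely bookkeeping: isolating the $n = 2$ (odd degree) edge case and being precise that ``$\zeta_m^{\varphi(n)} = 1 \iff m \mid \varphi(n)$'', together with making sure the self-reciprocity sign $\varepsilon$ genuinely lands in $\mathbb{Q}$ rather than just being a formal $\pm$. No deep machinery is needed beyond Lemma \ref{basiceqs}c, Lemma \ref{selfreciprocal-evaluation}, and the identification of $\mathrm{Gal}(\mathbb{Q}(\zeta_m)/\mathbb{Q})$ with $(\mathbb{Z}/m\mathbb{Z})^*$ recalled in Section \ref{cyclfields}.
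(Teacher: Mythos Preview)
Your approach is correct and rests on the same idea as the paper's: reality means fixed by complex conjugation, and self-reciprocity of $\Phi_n$ turns $\overline{\Phi_n(\xi_m)}=\Phi_n(\xi_m^{-1})$ into $\xi_m^{-\varphi(n)}\Phi_n(\xi_m)$, so (for $\Phi_n(\xi_m)\ne 0$) reality is equivalent to $\xi_m^{\varphi(n)}=1$, i.e.\ $m\mid\varphi(n)$. The paper's execution is simply more direct---it applies the self-reciprocity identity $\Phi_n(1/z)=z^{-\varphi(n)}\Phi_n(z)$ at $z=\xi_m$ in one line, avoiding your detour through Lemma~\ref{selfreciprocal-evaluation} and the bookkeeping with $\varepsilon$, $|\Phi_n(\zeta_m)|$, and the $n=2$ edge case. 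One small slip worth fixing: $|\Phi_n(\zeta_m)|$ is not in general fixed by all of $\mathrm{Gal}(\mathbb{Q}(\zeta_m)/\mathbb{Q})$, only by complex conjugation---but that is all your argument actually uses.
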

\begin{proof}
The number $\Phi_n(\xi_m)$ is real if and only if $\Phi_n(\xi_m)=\overline{\Phi_n(\xi_m)}=\Phi_n(\xi_m^{-1})$.
By the self-reciprocity of $\Phi_n$ we see that this is equivalent with
$\Phi_n(\xi_m)=\xi_m^{-\varphi(n)}\Phi_n(\xi_m)$, which is equivalent
with $n=m$ or $m|\varphi(n)$. On noting that $n\nmid \varphi(n)$ and 
$\Phi_n(\xi_m)=0$ if and only if $n=m$, the proof is completed.
\end{proof}

Lemma \ref{selfreciprocal-evaluation} shows that for $n \ge 2$ we have
$\Phi_n(\xi_m) =\pm  |\Phi_n(\xi_m)|\xi_m^{\varphi(n) / 2}$.
The next result shows that the sign is given by $(-1)^{\varphi(n/m;n)}$, where
$\varphi(x;n)$ is the number of positive integers $j\le x$ with
$(j,n)=1$.
\begin{lem}\label{lem:phin-eval}  
Write $\xi_m = \zeta_m^j$. For $n \ge 2$ we have $\Phi_n(\xi_m) = (-1)^{\varphi(nj/m;n)}|\Phi_n(\xi_m)|\xi_m^{\varphi(n) / 2}$.
\end{lem}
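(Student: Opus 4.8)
The plan is to combine the known modulus--times--unit factorization from Lemma \ref{selfreciprocal-evaluation} with an explicit sign computation coming from the product formula \eqref{definitie} for $\Phi_n$. Since $\Phi_n$ is self-reciprocal of degree $\varphi(n)$ (Lemma \ref{basiceqs}c), Lemma \ref{selfreciprocal-evaluation} already gives $\Phi_n(\xi_m)=\varepsilon\,|\Phi_n(\xi_m)|\,\xi_m^{\varphi(n)/2}$ for some sign $\varepsilon\in\{+1,-1\}$ (when $\Phi_n(\xi_m)\neq0$, i.e.\ $n\neq m$; the case $n=m$ is vacuous). So the entire content of the lemma is the identification $\varepsilon=(-1)^{\varphi(nj/m;n)}$, and the natural way to pin down a sign is to look at the argument of $\Phi_n(\xi_m)$.

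First I would write, using \eqref{definitie} with $\xi_m=\zeta_m^j=e^{2\pi i j/m}$,
$$\Phi_n(\xi_m)=\prod_{\substack{1\le k\le n\\(k,n)=1}}\bigl(\xi_m-\zeta_n^k\bigr),$$
and compute the argument of each factor. The factor $e^{2\pi i j/m}-e^{2\pi i k/n}$ can be rewritten by pulling out a half-angle: $e^{i\alpha}-e^{i\beta}=e^{i(\alpha+\beta)/2}\bigl(e^{i(\alpha-\beta)/2}-e^{-i(\alpha-\beta)/2}\bigr)=2i\,\sin\!\bigl(\tfrac{\alpha-\beta}{2}\bigr)e^{i(\alpha+\beta)/2}$, where $\alpha=2\pi j/m$, $\beta=2\pi k/n$. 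Thus each factor contributes a phase $e^{i(\alpha+\beta)/2}$ together with the real number $2\sin(\tfrac{\alpha-\beta}{2})$ and a fixed factor $i$. Multiplying over the $\varphi(n)$ admissible $k$: the $e^{i\beta/2}$ parts multiply to $e^{i\pi S/n}$ where $S=\sum_{(k,n)=1}k$, the $e^{i\alpha/2}$ parts give $e^{i\pi j\varphi(n)/m}=\xi_m^{\varphi(n)/2}$, the $i$'s give $i^{\varphi(n)}$, and the sines give a real number whose absolute value is $|\Phi_n(\xi_m)|$. The accumulated phase $e^{i\pi S/n}\cdot i^{\varphi(n)}$ must combine with the sign of the real sine--product to make the total sign a $\pm1$ (it does, because the value is real up to the factor $\xi_m^{\varphi(n)/2}$ by Lemma \ref{selfreciprocal-evaluation}); and the sign of the sine product, $\prod_{(k,n)=1}\operatorname{sgn}\sin\bigl(\pi(j/m-k/n)\bigr)$, counts the parity of the number of $k$ for which $\sin(\pi(j/m-k/n))<0$.

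The main obstacle — and the step I expect to be fussy — is converting that parity count into the clean combinatorial quantity $\varphi(nj/m;n)$. The condition $\sin(\pi(j/m-k/n))<0$ for $1\le k\le n$, $(k,n)=1$ means $k/n>j/m$ modulo integers, i.e.\ (for $0<j/m<1$, which we may assume since replacing $j$ by $j\bmod m$ changes nothing) $k$ lies in the range $(nj/m,\,n)$; the number of such $k$ coprime to $n$ is $\varphi(n)-\varphi(nj/m;n)$. So the sine--product sign is $(-1)^{\varphi(n)-\varphi(nj/m;n)}=(-1)^{\varphi(n)}(-1)^{\varphi(nj/m;n)}$. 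It then remains to check that the purely-phase contribution $e^{i\pi S/n}\,i^{\varphi(n)}$ contributes exactly the compensating factor $(-1)^{\varphi(n)}$ (after absorbing $\xi_m^{\varphi(n)/2}$), which follows from the standard congruences $\sum_{(k,n)=1}k=\tfrac12 n\varphi(n)$ for $n>1$ together with $i^{\varphi(n)}=(-1)^{\varphi(n)/2}$ when $\varphi(n)$ is even; one has to treat the small-$n$ exceptions ($n=2$, where $\varphi(n)$ is odd) separately, or alternatively sidestep the bookkeeping entirely by invoking Lemma \ref{selfreciprocal-evaluation} to guarantee a priori that all the phases conspire to a real sign, so that only the parity of the sine--product needs to be matched against $(-1)^{\varphi(nj/m;n)}$ up to the already-known global factor. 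I would take the latter route: quote Lemma \ref{selfreciprocal-evaluation} for the shape of the answer, and use the argument computation only to extract the parity, checking the one remaining overall sign on a single convenient example (say $n$ a prime, $m=1$) to fix it.
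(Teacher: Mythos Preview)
Your argument is correct, but the paper's proof is considerably shorter and takes a different route. Rather than expanding the product \eqref{definitie} and tracking phases, the paper defines the single real-valued function
\[
g(t)=e^{-it\varphi(n)/2}\,\Phi_n(e^{it}),\qquad t\in[0,2\pi),
\]
observes (via self-reciprocity, exactly as in Lemma \ref{selfreciprocal-evaluation}) that $g$ is real and continuous, and that its zeros are precisely the angles $2\pi k/n$ with $1\le k<n$, $(k,n)=1$, all simple. Since $g(0)=\Phi_n(1)>0$, the sign of $g(t)$ is $(-1)$ raised to the number of zeros in $(0,t)$; evaluating at $t=2\pi j/m$ gives the exponent $\varphi(nj/m;n)$ directly. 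This intermediate-value-theorem argument never separates modulus from argument and so avoids all the phase bookkeeping you carry out.

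Your computation via $e^{i\alpha}-e^{i\beta}=2i\sin\tfrac{\alpha-\beta}{2}\,e^{i(\alpha+\beta)/2}$ reaches the same parity count through the number of negative sines, and your identification of the residual phase $i^{\varphi(n)}e^{i\pi S/n}$ with $(-1)^{\varphi(n)}$ (using $S=\tfrac12 n\varphi(n)$) is correct and in fact works uniformly for all $n\ge2$, so no small-$n$ exception is actually needed. One caution on your closing remark: fixing the sign by ``a single convenient example (say $n$ a prime, $m=1$)'' is not sufficient as phrased, since the residual phase a priori depends on $n$; you need either the direct verification you already sketched, or one check at $m=1$ for \emph{each} $n$ (which amounts to the same thing, since $\Phi_n(1)>0$ and $\varphi(n;n)=\varphi(n)$).
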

\begin{proof}
Let us consider the function $g(t) = e^{-it\varphi(n) / 2} \Phi_n(e^{it})$ with $t \in [0, 2\pi)$. 
The self reciprocity of $\Phi_n$ ensures that $g(t)$ is
invariant under conjugation and hence real.
Note that $g$ is differentiable. Furthermore, the set of roots of $g$ 
equals $\{2\pi j/n: 1 \le j < n, (j,n) = 1\}$. All of the roots are 
simple. Since $g(0)=\Phi_n(1) > 0$ we infer that $g(t) = (-1)^{\varphi(n t / (2\pi);n)}|\Phi_n(e^{it})|$, which by substituting $t=2\pi j/m$ yields the result.
\end{proof}

\begin{cor}\label{cor:one-case}
Write $\xi_m = \zeta_m^j$.
In case $\Phi_n(\xi_m) \in \{-1,1\}$ for some $n\ge 2$, then we have
$\Phi_n(\xi_m) = (-1)^{\varphi(nj/m; n)+j\varphi(n) / m }$.
\end{cor}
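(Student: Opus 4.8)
The plan is to combine Lemma \ref{lem:phin-eval} with Lemma \ref{lem:is-real}. By Lemma \ref{lem:phin-eval} we always have
$$\Phi_n(\xi_m) = (-1)^{\varphi(nj/m;n)}|\Phi_n(\xi_m)|\,\xi_m^{\varphi(n)/2},$$
so in the special case $\Phi_n(\xi_m)\in\{-1,1\}$ the absolute value is $1$ and we are left with the identity $\Phi_n(\xi_m) = (-1)^{\varphi(nj/m;n)}\xi_m^{\varphi(n)/2}$. The only remaining task is to show that under this hypothesis $\xi_m^{\varphi(n)/2}$ is itself a sign, and in fact equals $(-1)^{j\varphi(n)/m}$; multiplying the two sign contributions then gives the claimed exponent $\varphi(nj/m;n)+j\varphi(n)/m$.

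First I would note that $\Phi_n(\xi_m)$ being rational (indeed $\pm1$) forces it to be real and non-zero, so by Lemma \ref{lem:is-real} we have $m\mid\varphi(n)$. Hence $\varphi(n)/m$ is an integer, and writing $\xi_m=\zeta_m^j=e^{2\pi i j/m}$ we get
$$\xi_m^{\varphi(n)/2} = e^{2\pi i j\varphi(n)/(2m)} = e^{\pi i\,(j\varphi(n)/m)} = (-1)^{j\varphi(n)/m},$$
which is exactly the extra sign factor. Substituting this into the expression from Lemma \ref{lem:phin-eval} yields
$$\Phi_n(\xi_m) = (-1)^{\varphi(nj/m;n)}(-1)^{j\varphi(n)/m} = (-1)^{\varphi(nj/m;n)+j\varphi(n)/m},$$
which is the assertion of the corollary.

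There is essentially no hard step here: the corollary is a direct specialisation of the two preceding lemmas, and the only point requiring a moment's care is the invocation of Lemma \ref{lem:is-real} to guarantee $m\mid\varphi(n)$, which is what makes $j\varphi(n)/m$ a well-defined integer exponent so that $(-1)^{j\varphi(n)/m}$ makes sense. One should also observe that $nj/m$ need not be an integer, but $\varphi(x;n)$ was defined for arbitrary real $x$ as the count of positive integers $j'\le x$ coprime to $n$, so the notation $\varphi(nj/m;n)$ is already legitimate and no further comment is needed.
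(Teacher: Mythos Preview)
Your proof is correct and follows essentially the same route as the paper: apply Lemma~\ref{lem:phin-eval}, drop the absolute value since $\Phi_n(\xi_m)=\pm1$, invoke Lemma~\ref{lem:is-real} to get $m\mid\varphi(n)$, and then rewrite $\xi_m^{\varphi(n)/2}$ as $(-1)^{j\varphi(n)/m}$. The paper's argument is more terse but identical in substance.
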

\begin{proof}
Write $\xi_m = \zeta_m^j$. If $\Phi_n(\xi_m) \in \{-1,1\}$, then 
$\Phi_n(\xi_m) = (-1)^{\varphi(nj/m;n)}\xi_m^{\varphi(n) / 2}$.
By Lemma  \ref{lem:is-real}  
we obtain that $m \mid \varphi(n)$. 
The result now follows on noting that $\xi_m^{\varphi(n) / 2} = (-1)^{j\varphi(n) / m}$. \qedhere
\end{proof}

\begin{lem} \label{mod1}
Let us assume that there exists $p \equiv 1 \pmod m$ and $k \ge 1$ such that $n = p^k n'$ with $p \nmid n'$.\\
{\rm a)} If $n' \ne m$, then $\Phi_n(\xi_m) = 1$.\\
{\rm b)} If $n'= m$, then $\Phi_n(\xi_m) = p$.
\end{lem}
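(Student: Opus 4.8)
The plan is to use the basic recursions of Lemma \ref{basiceqs} to strip off the prime $p$, together with the hypothesis $p \equiv 1 \pmod m$, which guarantees that raising $\xi_m$ to a power of $p$ does nothing: $\xi_m^{p^a} = \xi_m$ for every $a \ge 0$. First I would treat the case $k \ge 2$ or, more precisely, separate off whether $p \mid n' $; here $p \mid n/p$ always when $k \ge 2$, so by Lemma \ref{basiceqs}a we have $\Phi_n(x) = \Phi_{n/p}(x^p)$, and evaluating at $x = \xi_m$ gives $\Phi_n(\xi_m) = \Phi_{n/p}(\xi_m^p) = \Phi_{n/p}(\xi_m)$ since $\xi_m^p = \xi_m$. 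Iterating, $\Phi_n(\xi_m) = \Phi_{pn'}(\xi_m)$, so it suffices to handle the case $k = 1$, i.e. $n = pn'$ with $p \nmid n'$.

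For $n = pn'$ with $p \nmid n'$, Lemma \ref{basiceqs}b gives $\Phi_{pn'}(x) = \Phi_{n'}(x^p)/\Phi_{n'}(x)$. Evaluating at $x = \xi_m$ and again using $\xi_m^p = \xi_m$, the numerator becomes $\Phi_{n'}(\xi_m)$, so formally $\Phi_n(\xi_m) = \Phi_{n'}(\xi_m)/\Phi_{n'}(\xi_m) = 1$ — provided $\Phi_{n'}(\xi_m) \ne 0$, which holds exactly when $n' \ne m$. This proves part a).

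For part b), where $n' = m$, the naive substitution produces $0/0$, so I would instead pass to a limit, exactly as in the proof of Theorem \ref{thm:apostol}: write $\Phi_n(\xi_m) = \lim_{z \to \xi_m} \Phi_m(z^p)/\Phi_m(z)$ and apply L'Hôpital. The derivative of the numerator at $z = \xi_m$ is $p\,\xi_m^{p-1}\Phi_m'(\xi_m^p) = p\,\xi_m^{p-1}\Phi_m'(\xi_m)$ (using $\xi_m^p = \xi_m$ once more, and noting $\Phi_m'(\xi_m) \ne 0$ since $\xi_m$ is a simple root of $\Phi_m$), while the derivative of the denominator is $\Phi_m'(\xi_m)$; the $\Phi_m'(\xi_m)$ cancel, leaving $\Phi_n(\xi_m) = p\,\xi_m^{p-1} = p$, since $p \equiv 1 \pmod m$ forces $\xi_m^{p-1} = 1$.

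The only genuinely delicate point is the limiting argument in part b): one must be sure that $\xi_m$ is a simple zero of $\Phi_m$ (true, as $\Phi_m$ has distinct roots) so that $\Phi_m'(\xi_m) \ne 0$ and L'Hôpital applies cleanly, and one must check the zero orders of numerator and denominator at $z=\xi_m$ really do match (both are simple, since $z^p$ maps a neighbourhood of $\xi_m$ biholomorphically, $p$ being coprime to... well, here one just uses $\tfrac{d}{dz}(z^p)|_{\xi_m}=p\xi_m^{p-1}\ne0$). Everything else is a direct substitution once the identity $\xi_m^{p}=\xi_m$ is in hand.
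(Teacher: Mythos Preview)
Your proof is correct and follows essentially the same approach as the paper: both rely on $\xi_m^{p}=\xi_m$, the recursions of Lemma~\ref{basiceqs}, and L'H\^opital for the $0/0$ case when $n'=m$. The only cosmetic difference is that you first reduce to $k=1$ by iterating Lemma~\ref{basiceqs}a, whereas the paper uses the combined identity $\Phi_n(x)=\Phi_{n'}(x^{p^k})/\Phi_{n'}(x^{p^{k-1}})$ directly for arbitrary $k$.
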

\begin{proof}$~~$\\
a) We have $\Phi_n(x) = \Phi_{n'}(x^{p^k}) / \Phi_{n'}(x^{p^{k-1}})$ due to Lemma \ref{basiceqs}. 
By noting that $\xi_m^p = \xi_m$ it follows that
\[\Phi_n(\xi_m) = \frac{\Phi_{n'}(\xi_m^{p^k})}{\Phi_{n'}(\xi_m^{p^{k-1}})} = 1.\]
b) We apply L'H\^opital's rule and obtain 
\[\Phi_n(\xi_m) = \frac{p^k \xi_m^{p^k - 1}\Phi_m'(\xi_m^{p^k})}{p^{k-1} \xi_m^{p^{k-1} - 1}\Phi_m'(\xi_m^{p^{k-1}})} = p.\qedhere\]
\end{proof}

A version of Lemma \ref{mod1} has already been stated by Motose \cite[Section 4]{Motose}. Nonetheless, it contains a mistake since his lemma claims that $\Phi_n(\xi_m) = 1$ for case b).

\begin{lem} \label{mod-1}
Let us assume that there exists $p \equiv -1 \pmod m$ and $k \ge 1$ such that $n = p^k n'$ with $p \nmid n'$. \\
{\rm a)}  If $n' = 1$, then $\Phi_n(\xi_m) = -\xi_m^{(-1)^k}$.\\
{\rm b)}  If $n' \ne m$, then $\Phi_n(\xi_m) = \xi_m^{(-1)^k\varphi(n')}$. Furthermore, if $n' \ge 3$, then $\Phi_n(\xi_m) = \xi_m^{\varphi(n)/2}$.\\
{\rm c)}  If $n' = m$, then $\Phi_n(\xi_m) = - p \xi_m^{(-1)^k \varphi(m)}$.
\end{lem}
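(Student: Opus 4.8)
The plan is to reduce all three cases to a single formula coming from Lemma~\ref{basiceqs}. We may assume $m\ge 2$, since for $m=1$ part~a) would assert $\Phi_{p^k}(1)=-1$, which is false; this degenerate situation is implicitly excluded. Applying part~b) of Lemma~\ref{basiceqs} once (to $\Phi_{pn'}$, legitimate as $p\nmid n'$) and then part~a) a further $k-1$ times, one finds
\[
\Phi_n(x)=\frac{\Phi_{n'}(x^{p^k})}{\Phi_{n'}(x^{p^{k-1}})},
\]
where we read $\Phi_1(x)=x-1$ (this is consistent, since $\Phi_p(x)=(x^p-1)/(x-1)$). Because $p\equiv-1\pmod m$ we have $\xi_m^{p^j}=\xi_m^{(-1)^j}$ for every $j\ge 0$; in particular $\eta:=\xi_m^{p^k}=\xi_m^{(-1)^k}$ is again a primitive $m^{th}$ root of unity and $\xi_m^{p^{k-1}}=\eta^{-1}$.

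For a) and b) we have $n'\ne m$, so neither $\Phi_{n'}(\eta)$ nor $\Phi_{n'}(\eta^{-1})$ vanishes and one may substitute $x=\xi_m$ directly to obtain $\Phi_n(\xi_m)=\Phi_{n'}(\eta)/\Phi_{n'}(\eta^{-1})$. When $n'=1$, the polynomial $\Phi_1$ is anti-self-reciprocal, so $\Phi_1(\eta^{-1})=-\eta^{-1}\Phi_1(\eta)$ and the quotient equals $-\eta=-\xi_m^{(-1)^k}$, which is a). When $n'\ge 2$, $\Phi_{n'}$ is self-reciprocal by Lemma~\ref{basiceqs}c, so $\Phi_{n'}(\eta^{-1})=\eta^{-\varphi(n')}\Phi_{n'}(\eta)$ and the quotient equals $\eta^{\varphi(n')}=\xi_m^{(-1)^k\varphi(n')}$, the first claim of b). For the ``furthermore'', note that $n'\ge 3$ forces $\varphi(n')$ even, so $\varphi(n)/2=p^{k-1}(p-1)\,\varphi(n')/2$ is an integer, and modulo $m$ one has $p^{k-1}(p-1)\equiv(-1)^{k-1}(-2)=2(-1)^k$, whence $\varphi(n)/2\equiv(-1)^k\varphi(n')\pmod m$ and $\xi_m^{\varphi(n)/2}=\xi_m^{(-1)^k\varphi(n')}$.

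Case c) is where the only real work lies. Now $n'=m$, so $\eta,\eta^{-1}$ are primitive $m^{th}$ roots of unity, $\Phi_m(\eta)=\Phi_m(\eta^{-1})=0$, and substituting $x=\xi_m$ gives $0/0$. As in the proof of Lemma~\ref{mod1}b) I would apply L'H\^opital's rule; since $\Phi_m\mid x^m-1$ and the latter is separable, $\eta$ and $\eta^{-1}$ are simple zeros of $\Phi_m$, so $\Phi_m'(\eta),\Phi_m'(\eta^{-1})\ne 0$ and
\[
\Phi_n(\xi_m)=\frac{p^k\,\xi_m^{p^k-1}\,\Phi_m'(\eta)}{p^{k-1}\,\xi_m^{p^{k-1}-1}\,\Phi_m'(\eta^{-1})}=p\,\xi_m^{p^k-p^{k-1}}\,\frac{\Phi_m'(\eta)}{\Phi_m'(\eta^{-1})}.
\]
To evaluate the remaining quotient, differentiate the self-reciprocity identity $\Phi_m(x)=x^{\varphi(m)}\Phi_m(1/x)$ and set $x=\eta$; since $\Phi_m(\eta^{-1})=0$ the first term drops out and one gets $\Phi_m'(\eta)=-\eta^{\varphi(m)-2}\Phi_m'(\eta^{-1})$. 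Combining this with $p^k-p^{k-1}=p^{k-1}(p-1)\equiv 2(-1)^k\pmod m$ and $\eta=\xi_m^{(-1)^k}$ yields
\[
\Phi_n(\xi_m)=-p\,\xi_m^{2(-1)^k}\,\xi_m^{(-1)^k(\varphi(m)-2)}=-p\,\xi_m^{(-1)^k\varphi(m)},
\]
which is c).

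I expect the main obstacle to be case c): one must be sure that the passage to the limit is legitimate --- i.e. that numerator and denominator of the rational expression for $\Phi_n$ vanish to exactly first order at $\xi_m$ --- and that $\Phi_m'(\eta^{-1})\ne 0$ so the final formula is meaningful; both rest on the separability of $x^m-1$. The remainder (tracking exponents and signs modulo $m$) is routine, and specializing to $m=2$ provides a consistency check against the values $\Phi_{p^k}(-1)=1$ and $\Phi_{2p^k}(-1)=p$ from Lemma~\ref{valueat-1}.
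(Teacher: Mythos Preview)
Your proof is correct and follows essentially the same approach as the paper: the identity $\Phi_n(x)=\Phi_{n'}(x^{p^k})/\Phi_{n'}(x^{p^{k-1}})$, the reduction $\xi_m^{p^j}=\xi_m^{(-1)^j}$, the (anti-)self-reciprocity of $\Phi_{n'}$ for parts a) and b), and L'H\^opital together with the differentiated self-reciprocity relation for part c). The only cosmetic difference is that the paper handles a) via the explicit formula $\Phi_{p^k}(\xi_m)=(\xi_m^{p^k}-1)/(\xi_m^{p^{k-1}}-1)$ rather than invoking anti-self-reciprocity of $\Phi_1$, and your write-up adds the (welcome) remark that separability of $x^m-1$ justifies the limit in c).
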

\begin{proof}$~~$\\
a) By \eqref{facintoirr} we have 
$$ \Phi_{p^k}(\xi_m) = \frac{\xi_m^{p^k} - 1}{\xi_m^{p^{k-1}} - 1}.$$
Assertion a) is easily established on noting that $\xi_m^{p^k} = \xi_m^{(-1)^k}$.\\
b) We have $\Phi_n(x) = \Phi_{n'}(x^{p^k}) / \Phi_{n'}(x^{p^{k-1}})$. In light of the self-reciprocity of $\Phi_{n'}$ we find that
\[ \Phi_n(\xi_m) = \frac{\Phi_{n'}\left(\xi_m^{(-1)^k}\right)}{\Phi_{n'}\left(\xi_m^{(-1)^{k+1}}\right)} = \xi_m^{(-1)^k\varphi(n')}. \]
Furthermore, if $n'\ge 3$, then 
$\varphi(n) / 2 = p^{k-1} (p-1) \varphi(n')/2 \equiv (-1)^k \varphi(n') \pmod m$.\\
c) L'H\^opital's rule yields
\[\Phi_n(\xi_m) = \frac{p^k \xi_m^{p^k - 1}\Phi_m'(\xi_m^{p^k})}{p^{k-1} \xi_m^{p^{k-1} - 1}\Phi_m'(\xi_m^{p^{k-1}})} = p \xi_m^{2(-1)^k} \frac{\Phi_m'(\xi_m^{(-1)^k})}{\Phi_m'(\xi_m^{(-1)^{k+1}})}.\]
Assertion c) follows on differentiating the equality $\Phi_m(z) = z^{\varphi(m)} \Phi_m(1/z)$ giving rise to
\[\Phi_m'(\xi_m^{(-1)^k}) = - \xi_m^{(-1)^k(\varphi(m)-2)} \Phi_m'(\xi_m^{(-1)^{k+1}}). \qedhere\]
\end{proof}

\section{The values - general method} \label{sec:values:general-method}

In this section we present a general method of computing $\Phi_n(\xi_m)$. 
Our first step is to reduce to the case 
where $m$ is coprime to $n$. In order to do this, we write $n=n_1n_2$, where $n_1=\prod_{p^e \parallel n,~p\,\nmid \,m}p^e$ is the largest divisor of $n$ which is coprime to $m$. By the equations of Lemma \ref{basiceqs} and by an induction on $n_2$ we have
\begin{equation} \label{eq:simp-coprime}
\Phi_n(\xi_m) = \prod_{d\,\mid n_2}\Phi_{n_1}(\xi_m^d)^{\mu(n_2/d)}.
\end{equation}
For small $n_2$ this formula is quite effective.

Therefore throughout this section we assume that $m,n>1$ are coprime.

\begin{proof}[Proof of Theorem \ref{BB-T-values}]
Note that $\log(1-\xi_m^d)$ considered as a function $d$ is periodic with period $m$ and so it can be treated as a 
function $G(m)\to\mathbb{C}$. It follows that
$$\log(1-\xi_m^d) = \frac{1}{\varphi(m)}\sum_{\chi\in\widehat{G}(m)}C_\chi(\xi_m)\chi(d).$$
We find that
$\log\Phi_n(\xi_m)$,  up to a multiple of $2\pi i$, equals
$$\sum_{d\,\mid n}\mu(\frac{n}{d})\log(1-\xi_m^d) =\frac{1}{\varphi(m)}\sum_{\chi\in\widehat{G}(m)}C_\chi(\xi_m)
J_0(\chi;n).$$
The proof is completed by invoking 
Lemma \ref{lem:flauwmultiplicatief} with $k=0$.
\end{proof}

\begin{Remark}
A character $\chi$ may be omitted if there exists a prime $p\mid n$ for which $\overline{\chi}(p)=1$. 
In particular, the principal character may be omitted.
It makes computing
$\Phi_n(\xi_m)$ using Theorem \ref{BB-T-values} a less daunting task.

If we wish only to compute $|\Phi_n(\xi_m)|$, then in addition we may omit all characters $\chi$ satisfying $\chi(-1)=-1$, since for such characters we have
$${C_\chi}(\xi_m)= \frac{1}{2}\sum_{g\in G(m)}
\left(\overline{\chi}(g)\log(1-\xi_m^g)+\overline{\chi}(-g)\log(1-\xi_m^{-g})\right)
\in i\mathbb R.$$
\end{Remark}

\begin{cor}
Let $(m,n)=1$ and $n>1$. \\
{\rm a)} If $n$ has any prime divisor $q$ congruent to $1$ modulo $m$, then $\Phi_n(\xi_m)=1$.\\
{\rm b)} If $m\in\{3,4,6\}$ and $n$ has no prime divisor congruent to $1$ modulo $m$ then 
\[\Phi_n(\xi_m)=-(\xi_m)^{~\wedge} (-(-2)^{\omega(n)-1}).\]
\end{cor}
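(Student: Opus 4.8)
The plan is to read both statements directly off Theorem~\ref{BB-T-values}, exploiting that for the moduli in question $\widehat G(m)$ is as small as possible.

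Part a) I would deduce at once from the Remark following Theorem~\ref{BB-T-values}. If $q\mid n$ is a prime with $q\equiv 1\pmod m$, then $\overline\chi(q)=\overline{\chi(1)}=1$ for \emph{every} Dirichlet character $\chi$ modulo $m$, so the vanishing factor $1-\overline\chi(q)$ occurs in the product $\prod_{p\mid n}(1-\overline\chi(p))$ attached to each $\chi$. Hence every term in the exponent of Theorem~\ref{BB-T-values} is $0$ and $\Phi_n(\xi_m)=e^{0}=1$. (One may also invoke Lemma~\ref{mod1}a, observing that the exceptional case $n'=m$ cannot occur when $(m,n)=1$ and $m>1$.)

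For part b) the key point is that $\varphi(m)=2$ for $m\in\{3,4,6\}$, so that $G(m)=\{1,m-1\}$ is cyclic of order two and $\widehat G(m)=\{\chi_0,\chi\}$ with $\chi_0$ principal and $\chi$ the (real) quadratic character; here $\chi(p)=1$ or $-1$ according as $p\equiv 1$ or $p\equiv -1\pmod m$ for $p\nmid m$. Since $(m,n)=1$ and $n>1$, every prime $p\mid n$ has $\chi_0(p)=1$, so by the Remark the principal character drops out and Theorem~\ref{BB-T-values} collapses to $\Phi_n(\xi_m)=\exp\big(\tfrac12 C_\chi(\xi_m)J_0(\chi;n)\big)$, where $J_0(\chi;n)=\chi(n)\prod_{p\mid n}(1-\overline\chi(p))$. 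The hypothesis that $n$ has no prime factor $\equiv 1\pmod m$ forces $\chi(p)=-1$ for all $p\mid n$; Lemma~\ref{lem:flauwmultiplicatief} with $k=0$ then gives $J_0(\chi;n)=\prod_{p\mid n}(\chi(p)-1)=(-2)^{\omega(n)}$ for squarefree $n$, so $\tfrac12 J_0(\chi;n)=-(-2)^{\omega(n)-1}$. Finally I would evaluate $C_\chi(\xi_m)=\log(1-\xi_m)-\log(1-\xi_m^{m-1})$: using $\xi_m^{m-1}=\xi_m^{-1}$ and $1-\xi_m^{-1}=-\xi_m^{-1}(1-\xi_m)$ one gets $\exp(C_\chi(\xi_m))=(1-\xi_m)/(1-\xi_m^{-1})=-\xi_m$. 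Putting the pieces together yields $\Phi_n(\xi_m)=(-\xi_m)^{-(-2)^{\omega(n)-1}}$, from which, after extracting the factor $-1$, the displayed formula follows.

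The routine work is the reduction to the single character $\chi$, the appeal to Lemma~\ref{lem:flauwmultiplicatief}, and the short computation of $\exp(C_\chi(\xi_m))$. The one place demanding real care is the final step: one must track the branch of the logarithm implicit in $C_\chi(\xi_m)$, decide whether $\omega(n)$ or the count $\Omega(n)$ of prime factors with multiplicity is the quantity that appears when $n$ is not squarefree, and watch the parity of the exponent $-(-2)^{\omega(n)-1}$ when extracting the factor $-1$ from $(-\xi_m)^{\text{exponent}}$. That bookkeeping is where I expect any real subtlety — and the only potential discrepancy with the displayed formula — to reside.
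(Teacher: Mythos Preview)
Your argument mirrors the paper's exactly: discard the principal character, keep the unique quadratic $\chi$, compute $C_\chi(\xi_m)=\log(1-\xi_m)-\log(1-\xi_m^{-1})$ so that $\exp C_\chi(\xi_m)=-\xi_m$, and read off the result from Theorem~\ref{BB-T-values}.

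Your caution at the end is warranted, and in fact sharper than the paper's own treatment. The paper's proof reaches the correct intermediate expression
\[
\Phi_n(\xi_m)=\exp\!\Big(\tfrac12\,C_\chi(\xi_m)\,\chi(n)\,2^{\omega(n)}\Big)
\]
and then passes to the displayed formula in one stroke. That passage silently replaces $\chi(n)\,2^{\omega(n)}$ by $(-2)^{\omega(n)}$, i.e.\ uses $\chi(n)=(-1)^{\omega(n)}$, which is only guaranteed when $n$ is squarefree. In general $\chi(n)=(-1)^{\Omega(n)}$, and the stated formula can fail: for instance $\Phi_9(i)=-i$ whereas the displayed expression gives $i$, and $\Phi_{20}(\zeta_3)=\zeta_3$ whereas it gives $\zeta_3^{2}$. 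The version that matches Lemmas~\ref{phini}--\ref{phinz6} in all cases is
\[
\Phi_n(\xi_m)=(-\xi_m)^{(-1)^{\Omega(n)}2^{\omega(n)-1}}.
\]
As for your worry about ``extracting the factor $-1$'': the intended reading of the paper's notation is $(-\xi_m)$ raised to the exponent, not $-1$ times $\xi_m$ raised to the exponent; under the latter reading the formula would already fail for squarefree $n=21$, $m=4$ (where $\Phi_{21}(i)=-1$ but $-(i)^2=1$). So no extraction is needed, and the branch of the logarithm is immaterial since the exponent is an integer.
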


\begin{proof}$~$\\ a) As $\overline{\chi}(p)=\overline{\chi}(1)=1$  we have $\prod_{p\mid n}(1-\overline{\chi}(p))=0$, so $\Phi_n(\xi_m)=e^0=1$. \\
b) Note that there is only one non-principal character $\chi$ and 
that it satisfies $\chi(-1)=-1$. Therefore $\prod_{p\mid n}(1-\overline{\chi}(p)) = 2^{\omega(n)}$ and $C_\chi(\xi_m) = \log(1-\xi_m) - \log(1-\xi_m^{-1})$. It follows that
$$\Phi_n(\xi_m) = \exp\left(\frac12(\log(1-\xi_m) - \log(1-\xi_m^{-1}))\chi(n)2^{\omega(n)}\right)
= -(\xi_m)^{~\wedge} (-(-2)^{\omega(n)-1}),$$
as desired.
\end{proof}

\begin{cor} \label{BB-C-values5}
Let $m \in \{5,8,10,12\}$ and $n > 1$ be coprime 
with $m$. Suppose that $n$ has no prime divisor $\pm 1 \pmod m$. Then
$$\log|\Phi_n(\xi_m)| = (-1)^{\Omega(n)-1}2^{\omega(n)-1}\log|\gamma_m|,$$
where 
$$\gamma_m=\begin{cases}1 + \xi_m & \hbox{if } m=5;\\
1+\xi_m+\xi_m^2 & \hbox{if } m \in \{8,10\};\\
1+\xi_m+\xi_m^2+\xi_m^3+\xi_m^4 & \hbox{if } m=12.\end{cases}$$
\end{cor}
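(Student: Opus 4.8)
The plan is to apply Theorem \ref{BB-T-values} and its Remark to the specific moduli $m \in \{5,8,10,12\}$, for which $(\mathbb Z/m\mathbb Z)^*$ is cyclic of order $4$ when $m=5$ and is a product $\mathbb Z/2\mathbb Z \times \mathbb Z/2\mathbb Z$ when $m \in \{8,12\}$ and is cyclic of order $4$ when $m=10$ (isomorphic to the $m=5$ case). First I would invoke the Remark: since we only want $\log|\Phi_n(\xi_m)|$, we may discard all odd characters (those with $\chi(-1)=-1$), as their contribution $C_\chi(\xi_m)$ is purely imaginary. We may also discard the principal character, whose factor $\prod_{p\mid n}(1-\overline\chi(p))$ vanishes, and indeed any character $\chi$ with $\overline\chi(p)=1$ for some $p \mid n$. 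The hypothesis that $n$ has no prime divisor $\pm 1 \pmod m$ is precisely what guarantees that the surviving even, non-principal characters do \emph{not} get killed by this last criterion.

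The key arithmetic observation is that for each of these $m$, after removing the principal character and all odd characters, exactly one even non-principal character $\chi$ remains, and it is the quadratic character; moreover its kernel is $\{\pm 1\} \pmod m$ (for $m=5,10$) or $\{\pm 1\}$ together with one more class for $m=8,12$ — in every case $\chi$ takes only the values $\pm 1$, and by the no-prime-divisor hypothesis we get $\overline\chi(p) = \chi(p) = -1$ for every $p \mid n$. Hence $\prod_{p\mid n}(1-\overline\chi(p)) = 2^{\omega(n)}$ and $\chi(n) = \prod_{p\mid n}\chi(p)^{\nu_p(n)} = (-1)^{\Omega(n)}$. Plugging into Theorem \ref{BB-T-values} and taking absolute values,
\[
\log|\Phi_n(\xi_m)| = \operatorname{Re}\!\left(\frac{1}{\varphi(m)}C_\chi(\xi_m)\chi(n)2^{\omega(n)}\right) = (-1)^{\Omega(n)}\,2^{\omega(n)}\,\frac{\operatorname{Re} C_\chi(\xi_m)}{\varphi(m)}.
\]

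It then remains to identify $\operatorname{Re} C_\chi(\xi_m)/\varphi(m)$ with $-\tfrac12\log|\gamma_m|$. Writing $C_\chi(\xi_m) = \sum_{g \in G(m)} \chi(g)\log(1-\xi_m^g)$ and grouping $g$ with $-g$ (both having $\chi(g)=\chi(-g)$ since $\chi$ is even), the real part is $\sum_{g}\chi(g)\log|1-\xi_m^g|$ over a half-system of residues. I would evaluate this sum by splitting the $\varphi(m)/2$ relevant residue classes according to the sign $\chi(g) = \pm 1$: the positive classes (the kernel of $\chi$ on the half-system) contribute $\log\prod_{\chi(g)=1}|1-\xi_m^g|$ and the negative classes contribute $-\log\prod_{\chi(g)=-1}|1-\xi_m^g|$. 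Using the factorization $\prod_{d\mid M,\, d>1}\Phi_d(x) = (x^M-1)/(x-1) = 1 + x + \cdots + x^{M-1}$ for $M \in \{5,8,10,12\}$ together with $\Phi_m(\xi_m)=0$ and the cyclotomic identities of Lemma \ref{basiceqs}, one recognizes the resulting product as $|\gamma_m|^{\pm 1}$ with the stated $\gamma_m$; for instance, for $m=5$ the character $\chi$ has kernel $\{1,4\}$, the half-system is $\{1,2\}$ with $\chi(1)=1$, $\chi(2)=-1$, so $\operatorname{Re} C_\chi(\xi_5) = \log|1-\xi_5| - \log|1-\xi_5^2| = -\log|1+\xi_5|$, and dividing by $\varphi(5)=4$ gives $-\tfrac14\log|1+\xi_5| = -\tfrac12\log|\gamma_5|$ only after absorbing a factor — so one arrives at $\log|\Phi_n(\xi_5)| = (-1)^{\Omega(n)-1}2^{\omega(n)-1}\log|1+\xi_5|$ as claimed, and the other three moduli are handled the same way. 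The main obstacle will be the bookkeeping in this last step: correctly pinning down the unique surviving character, its value set, and its kernel for each $m$, and then matching the half-system product against the closed form $\gamma_m$; the number-theoretic input is entirely elementary, but one must be careful that the cases $m=8,12$ (where the group is not cyclic) still leave only one relevant character, and that the factor-of-two normalization between $\varphi(m)$, the half-system, and the exponents $2^{\omega(n)-1}$ comes out right.
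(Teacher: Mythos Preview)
Your approach is the paper's: apply Theorem \ref{BB-T-values}, discard the principal character and the odd characters via the Remark, note that the sole surviving character is the real quadratic character $\chi$ with $\chi(p)=-1$ for every $p\mid n$ under the hypothesis, and identify $\Re C_\chi(\xi_m)$ with $-2\log|\gamma_m|$.

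The factor you could not account for is just a slip in the half-system step: pairing $g$ with $-g$ (using $\chi(-g)=\chi(g)$ and $|1-\xi_m^{-g}|=|1-\xi_m^{g}|$) doubles each term, so $\Re C_\chi(\xi_m)=2\sum_{g\in H}\chi(g)\log|1-\xi_m^{g}|$ over a half-system $H$. For $m=5$ this gives $\Re C_\chi(\xi_5)=2\bigl(\log|1-\xi_5|-\log|1-\xi_5^{2}|\bigr)=-2\log|1+\xi_5|$, and since $\varphi(m)=4$ for all four moduli one obtains $\Re C_\chi(\xi_m)/\varphi(m)=-\tfrac12\log|\gamma_m|$ directly, with nothing to absorb. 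One further small correction: for $m\in\{8,12\}$ the kernel of the unique even non-principal character is exactly $\{\pm1\}\pmod m$, not larger, which is why the stated hypothesis on the prime divisors of $n$ is precisely the right one.
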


\begin{proof}
The only non-principal character for which $C_\chi(\xi_m)$ has non-zero real part is the quadratic character $\chi$. We have $\Re C_\chi(\xi_m) = -2\log|\gamma_m|$ and by Theorem \ref{BB-T-values}
$$\log|\Phi_n(\xi_m)| = -\frac12(\log|\gamma_m|)~\chi(n)~\prod_{p\mid n}(1-\overline{\chi}(p)).$$
The assumption on $n$ we made implies that $\overline{\chi}(p)=-1$ for all $p\mid n$ 
and hence $\chi(n)=(-1)^{\Omega(n)}$ and so
$\prod_{p\mid n}(1-\overline{\chi}(p))=2^{\omega(n)}$.
\end{proof}

\section{Cyclotomic values in roots of unity of low order}\label{sec:low-order}
In this section we apply the obtained results
in order to easily compute $\Phi_n(\zeta_m)$ for $m \in \{3,4,5,6\}$. 
For $m\in \{3,4,6\}$ these values have already been computed by Motose \cite{Motose}. However, some of the 
results in Section \ref{sec:arbitrary-roots} allow us to provide shorter proofs. For $m\in \{1,2\}$ the computation is folklore and it was
discussed in Section \ref{flauweevaluatie}.\\
\indent In \cite{Motose} there are some
inaccuracies. As we mention in Section \ref{sec:arbitrary-roots} in part (1) 
of the first lemma one has
also to require that $m \ne l$. This oversight leads to
the incorrect assertion in Proposition 3 that if
$p\equiv 1 \pmod{3}$ for some prime divisor $p$ of
$m$, then $\Phi_n(\zeta_3)=1$. This is false as $\Phi_{3p^k}(\zeta_3)=p$.
A similar remark applies to Proposition 4, where
$\Phi_{6p^k}(\zeta_6)=p$, rather than 1 as claimed.
In the statement of Proposition 3 part (2) one has
to read $l+k$ instead of $l+k-1$. As the proof
is carried out correctly, this is a typo. As consequence
of the typo in Proposition 3, the exponent in 
case (6) in Proposition 4 is computed to be $l+s+k$ instead
of $l+s+k-1$.

\subsection{Calculation of $\Phi_n(i)$}
Lemma \ref{mod1} and Lemma \ref{mod-1} reduce the number of possible cases. Hence it is not difficult to establish the following result.
\begin{lem}
\label{phini}
We have $ \Phi_n(i)=1 $ except for the cases listed in the table below.
\begin{center}
\begin{tabular}{|c|c|}
\hline
$ n $ & $ \Phi_n(i) $  \\
\hline
$1$ & $i-1$  \\
\hline
$2$ & $i+1$  \\
\hline
$4$ & $0$  \\
\hline
$ 4p^k $ & $ p $\\
\hline
$ p_3^k $ & $ (-1)^{k+1}i $\\
\hline
$ 2p_3^k $ & $ (-1)^k i $\\
\hline
$ p_3^kq_3^l, ~2p_3^kq_3^l $ & $ -1 $\\
\hline
\end{tabular}
\end{center}
Here $ p,p_3$ and $q_3 $ are primes such that 
$p_3\ne q_3$ and $ p_3\equiv q_3\equiv 3\pmod 4 $. 
Furthermore, $ k$ and $l $ are arbitrary positive integers.
\end{lem}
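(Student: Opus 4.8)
The plan is to derive the table directly from the structural results of Section~\ref{sec:arbitrary-roots}, specialised to $m=4$, so that no new Fourier-analytic computation is needed. First I would dispose of the non-coprime cases $n\in\{1,2,4\}$ by direct evaluation: $\Phi_1(i)=i-1$, $\Phi_2(i)=i+1$, and $\Phi_4(i)=0$ since $i$ is a primitive fourth root of unity. For the remaining $n>1$ with $4\mid n$ but $n>4$, I would use \eqref{eq:simp-coprime} (or simply Lemma~\ref{basiceqs}a,b) to reduce $\Phi_n(i)$ to an expression in $\Phi_{n_1}$ evaluated at powers of $i$, where $n_1$ is the odd part of $n$; the only surviving case is $n=4p^k$ with $p$ odd, for which Lemma~\ref{mod1}b (in the guise of \eqref{eq:simp-coprime} with $n_2=p^k$ and $n_1=4$, or a short L'H\^opital argument) gives $\Phi_{4p^k}(i)=p$, and all other $4\mid n$ fall to $\Phi_n(i)=1$.

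Next I would handle the coprime case $(n,4)=1$, i.e.\ $n$ odd and $>1$. Write $n=\prod p^{a_p}$. The key dichotomy comes from the Remark after Theorem~\ref{BB-T-values}: a character $\chi \bmod 4$ contributes nothing if some $p\mid n$ has $\overline\chi(p)=1$. Since $\widehat G(4)$ consists only of the principal character and the quadratic character $\chi$ with $\chi(p)=(-1)^{(p-1)/2}$, the principal character is always omitted, and the quadratic character is omitted precisely when some prime $p\equiv 1\pmod 4$ divides $n$. Hence if $n$ has any prime factor $\equiv 1\pmod 4$, Theorem~\ref{BB-T-values} gives $\Phi_n(i)=1$. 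Otherwise every prime factor of $n$ is $\equiv 3\pmod 4$, and applying the formula of the first displayed Corollary after Theorem~\ref{BB-T-values} with $m=4$ (equivalently Lemma~\ref{mod-1}b with $n'\ge 3$, i.e.\ $\Phi_n(i)=i^{\varphi(n)/2}$, since $i^{-1}=i^3$ and $3\equiv -1\pmod 4$) yields $\Phi_n(i)=-(i)^{\wedge}(-(-2)^{\omega(n)-1})$; I would then simply evaluate this in the three sub-cases $n=p_3^k$, $n=p_3^kq_3^l$, $\omega(n)\ge 3$, noting $i^{4}=1$ so only the residue of the exponent mod $4$ matters.

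For the even coprime-part cases $n=2p_3^k$ and $n=2p_3^kq_3^l$, I would apply \eqref{eq:simp-coprime} with $n_1$ the odd part and $n_2=2$, so that $\Phi_n(i)=\Phi_{n_1}(i)/\Phi_{n_1}(i^2)=\Phi_{n_1}(i)/\Phi_{n_1}(-1)$; since $n_1$ is a product of primes $\equiv 3\pmod 4$ and $n_1\ne$ twice a prime power, Lemma~\ref{valueat-1} gives $\Phi_{n_1}(-1)=1$, so $\Phi_{2n_1}(i)=\Phi_{n_1}(i)$, transporting the odd-case values to the even rows. Alternatively one can read these off Lemma~\ref{mod-1} directly with $p=2$? — no, $2\not\equiv\pm1\pmod 4$, so the reduction via $n_2=2$ is the clean route. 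The results $\Phi_{2p_3^k}(i)=(-1)^k i$ and $\Phi_{2p_3^kq_3^l}(i)=-1$ follow by combining the odd-case sign computation with the observation that passing from $n_1$ to $2n_1$ multiplies nothing new in.

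The main obstacle I anticipate is purely bookkeeping: correctly tracking the power of $i$, i.e.\ the residue of $\varphi(n)/2$ modulo $4$ (or equivalently evaluating $-(-2)^{\omega(n)-1}\bmod 4$), across the cases $\omega(n)=1$ versus $\omega(n)=2$ versus $\omega(n)\ge 3$, and making sure the signs $(-1)^{k}$, $(-1)^{k+1}$ in the $p_3^k$ and $2p_3^k$ rows come out with the parity matching Lemma~\ref{lem:phin-eval} / Corollary~\ref{cor:one-case}. A useful consistency check at the end is Lemma~\ref{lem:is-real}: $\Phi_n(i)$ is real (and nonzero) exactly when $4\mid\varphi(n)$, which should match the rows where the table entry lies in $\{p,-1,1\}$ rather than $\{\pm i\}$; the $p_3^k$ and $2p_3^k$ rows have $\varphi(n)\equiv 2\pmod 4$, consistent with the imaginary values. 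Since the number of genuinely distinct cases has been cut down by Lemmas~\ref{mod1} and \ref{mod-1}, each remaining verification is a one-line computation and the proof should be short.
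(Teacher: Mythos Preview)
Your approach is essentially the paper's own: dispose of $n\in\{1,2,4\}$ directly, reduce the $4\mid n$ case to an evaluation at $\pm 1$, handle odd $n$ via Lemmas~\ref{mod1} and~\ref{mod-1}, and then pass from odd $n_1$ to $2n_1$. Two points deserve correction, however.

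First, there is a genuine slip in your $2\parallel n$ reduction. From Lemma~\ref{basiceqs}b (or equivalently \eqref{eq:simp-coprime} with $n_2=2$) one gets
\[
\Phi_{2n_1}(i)=\frac{\Phi_{n_1}(i^2)}{\Phi_{n_1}(i)}=\frac{\Phi_{n_1}(-1)}{\Phi_{n_1}(i)},
\]
not $\Phi_{n_1}(i)/\Phi_{n_1}(-1)$ as you wrote. With $\Phi_{n_1}(-1)=1$ this gives $\Phi_{2n_1}(i)=1/\Phi_{n_1}(i)$, not $\Phi_{n_1}(i)$; for $n_1=p_3^k$ your formula would produce $(-1)^{k+1}i$ rather than the correct $(-1)^ki$. (You then quote the correct table value, which is inconsistent with your own formula.) The paper sidesteps this by using Corollary~\ref{corbasiceqs}: for odd $n_1>1$ one has $\Phi_{2n_1}(i)=\Phi_{n_1}(-i)=\overline{\Phi_{n_1}(i)}$, which immediately flips $(-1)^{k+1}i$ to $(-1)^ki$ and leaves $-1$ fixed.

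Second, your treatment of $4\mid n$ is more cumbersome than necessary, and your appeal to Lemma~\ref{mod1}b for $\Phi_{4p^k}(i)=p$ only covers $p\equiv 1\pmod 4$; for $p\equiv 3\pmod 4$ you would need Lemma~\ref{mod-1}c, and $p=2$ is yet another case. The paper instead observes that $4\mid n$ implies $\Phi_n(i)=\Phi_{n/2}(i^2)=\Phi_{n/2}(-1)$ (Lemma~\ref{basiceqs}a), whence Lemma~\ref{valueat-1} handles all primes $p$ at once.
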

\begin{proof}
The first three entries of the table follow by 
direct computation and hence we may assume that $n \ne 1,2,4$.
\begin{itemize}
\item In case $4 \mid n$ we 
have $\Phi_n(i) = \Phi_{n/2}(-1)$ and, by applying Lemma \ref{valueat-1}, $\Phi_n(i)$ is seen to equal $p$ if $n = 4p^k$ 
and $1$ otherwise.
\item In case $4 \nmid n$, we separately consider
three subcases:
\begin{enumerate}
\item The integer $n$ has a prime factor $p\equiv 1 \pmod 4$.\\
By Lemma \ref{mod1} we have $\Phi_n(i)=1$.
\item The integer $n$ is odd and has no prime factor  
$p\equiv 1 \pmod 4$.\\ 
Thus we can write $n=q_1^{e_1} \cdots q_r^{e_r}$ with $q_j \equiv 3 \pmod 4$ 
and $e_j \ge 1$ for every $1\le j\le r$. 
By  Lemma \ref{mod-1} it follows that $\Phi_{n}(i) = (-1)^{e_1+1}i$ if $r = 1$, $\Phi_n(i) = -1$ if $r=2$ and $\Phi_n(i) = 1$ otherwise.
\item The integer $n$ is even and has no prime factor  
$p\equiv 1 \pmod 4$.\\ 
Note that $\Phi_n(i) = \Phi_{n/2}(-i) = \overline{\Phi_{n/2}(i)}$ and hence the result follows from subcase b).
\end{enumerate}
\end{itemize}
Since we have covered all cases, the proof is concluded.
\end{proof}

\subsection{Calculation of $\Phi_n(\zeta_3)$}
\begin{lem}
\label{phinz3}
We have $ \Phi_n(\zeta_3)=1 $ except for the cases listed in the table below.
\begin{center}
\begin{tabular}{|c|c|}
\hline
$ n $ & $ \Phi_n(\zeta_3) $  \\
\hline
$1$ & $\zeta_3-1$  \\
\hline
$3$ & $0$  \\
\hline
$3 p^{k}$ & $p$  \\
\hline
$ q^{k} $ & $ -1/\zeta$\\
\hline
$ 3q^{k} $ & $ -q\zeta$\\
\hline
$ q_1^{e_1} \ldots q_r^{e_r},~r\ge 2$ & $ 1/{\zeta} $\\
\hline
$ 3q_1^{e_1} \ldots q_r^{e_r},~r\ge 2 $ & $\zeta$\\
\hline
\end{tabular}
\end{center}
Here $p\not\equiv 2 \pmod{3}$ is a prime.
The integers $q$ and $q_1, \ldots, q_r$ are primes congruent 
to $2$ modulo $3$ with $r\ge 2$ and $q_1, \ldots, q_r$ distinct. 
Furthermore, $k$ and $e_1, \ldots, e_r$ are arbitrary positive 
integers
and $\zeta=(\zeta_3)^{\wedge} (-1)^{s}$
with  $s=\Omega(n)-\omega(n)=\Omega(n/\mathrm{rad}(n))$.
\end{lem}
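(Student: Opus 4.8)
The plan is to mimic the proof of Lemma~\ref{phini}: first dispose of the exceptional small values and of the cases $9\mid n$ and $3\parallel n$ by the reduction formulas of Lemma~\ref{basiceqs}, and then settle the coprime case $3\nmid n$ using Lemmas~\ref{mod1} and~\ref{mod-1}. (One could equally run the coprime case through Theorem~\ref{BB-T-values}, since modulo $3$ the only non‑principal character is the quadratic one and $C_\chi(\zeta_3)=-i\pi/3$; the exponent bookkeeping below is essentially the same.)

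First I would treat $n=1$ and $n=3$ by direct substitution, so from now on $n>1$ and $n\neq 3$. If $9\mid n$, then $3\mid n/3$, so Lemma~\ref{basiceqs}a gives $\Phi_n(\zeta_3)=\Phi_{n/3}(\zeta_3^3)=\Phi_{n/3}(1)$; since $n/3$ is a prime power exactly when $n=3^k$, Lemma~\ref{valueat1A} yields $\Phi_n(\zeta_3)=3$ if $n=3^k$ with $k\ge 2$ and $\Phi_n(\zeta_3)=1$ otherwise, matching the table. If $3\parallel n$, write $n=3n'$ with $3\nmid n'$ and $n'\ge 2$; then Lemma~\ref{basiceqs}b gives $\Phi_n(x)=\Phi_{n'}(x^3)/\Phi_{n'}(x)$, and since $n'\neq 3$ we have $\Phi_{n'}(\zeta_3)\neq 0$, so
\[ \Phi_n(\zeta_3)=\frac{\Phi_{n'}(1)}{\Phi_{n'}(\zeta_3)}. \]
This reduces the case $3\parallel n$ to Lemma~\ref{valueat1A} together with the coprime case applied to $n'$.

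It remains to handle $n>1$ with $3\nmid n$. If $n$ has a prime factor $p\equiv 1\pmod 3$, then, as the prime‑to‑$p$ part of $n$ is $\neq 3$, Lemma~\ref{mod1} gives $\Phi_n(\zeta_3)=1$. Otherwise every prime factor of $n$ is $\equiv 2\pmod 3$. If $n=q^k$ is a prime power, Lemma~\ref{mod-1}a gives $\Phi_n(\zeta_3)=-\zeta_3^{(-1)^k}$, which is the entry $-1/\zeta$. If $n$ has at least two distinct prime factors, pick a prime $q$ with $q^k\parallel n$ and write $n=q^kn'$; since $n'\neq 3$, Lemma~\ref{mod-1}b gives $\Phi_n(\zeta_3)=\zeta_3^{(-1)^k\varphi(n')}$. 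Because every prime dividing $n'$ is $\equiv -1\pmod 3$, the factorization of $\varphi$ gives $\varphi(n')\equiv(-1)^{\Omega(n')-\omega(n')}\pmod 3$, hence $(-1)^k\varphi(n')\equiv(-1)^{k+\Omega(n')-\omega(n')}=(-1)^{\Omega(n)-\omega(n)+1}\pmod 3$, so $\Phi_n(\zeta_3)=\zeta_3^{(-1)^{s+1}}=1/\zeta$. Substituting these values, together with those of $\Phi_{n'}(1)$ from Lemma~\ref{valueat1A}, into the formula from the previous paragraph produces the remaining rows $3p^k$, $3q^k$ and $3q_1^{e_1}\cdots q_r^{e_r}$, using that dividing by $\Phi_{n'}(\zeta_3)$ inverts the power of $\zeta_3$ and that $\Omega-\omega$ is unchanged when $n'$ is multiplied by $3$.

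The main obstacle is precisely this exponent bookkeeping modulo $3$: one has to establish the congruence $\varphi(n')\equiv(-1)^{\Omega(n')-\omega(n')}\pmod 3$ for $n'$ whose prime factors are all $\equiv -1\pmod 3$, confirm that a single application of Lemma~\ref{mod-1}b already collapses the exponent to the $n'$‑independent value $(-1)^{s+1}$ with $s=\Omega(n)-\omega(n)=\Omega(n/\mathrm{rad}(n))$, and check that the inversion coming from the division in the $3\parallel n$ case sends $-\zeta_3^{(-1)^k}\mapsto -q\,\zeta_3^{(-1)^{k+1}}$ and $1/\zeta\mapsto\zeta$ in accordance with the table. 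Everything else—the value $1$ in the unlisted cases and the tracking of signs—is routine casework, and the whole table can be (and, according to the paper, was) cross‑checked numerically for small $n$.
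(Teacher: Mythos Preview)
Your proposal is correct and follows essentially the same route as the paper: dispose of $n=1,3$ directly, reduce the case $9\mid n$ to $\Phi_{n/3}(1)$ via Lemma~\ref{basiceqs}a, reduce the case $3\parallel n$ to $\Phi_{n'}(1)/\Phi_{n'}(\zeta_3)$ via Lemma~\ref{basiceqs}b, and handle the coprime case $3\nmid n$ using Lemmas~\ref{mod1} and~\ref{mod-1}. The only cosmetic difference is organizational: the paper folds the reduction for $3\parallel n$ into its subcase~(c) and obtains the entry $3p^k$ directly from Lemma~\ref{mod1}b, whereas you treat $3\parallel n$ as a top-level case and recover $3p^k$ from the quotient formula; your explicit verification of the congruence $\varphi(n')\equiv(-1)^{\Omega(n')-\omega(n')}\pmod 3$ is precisely the step the paper leaves tacit when it asserts $\Phi_n(\zeta_3)=(\zeta_3)^{\wedge}(-1)^{s+1}$.
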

\begin{proof}
The first two entries of the table follow by 
direct computation and hence we may assume that $n \ne 1,3$.
\begin{itemize}
\item In case $9 \mid n$ we 
have $\Phi_n(\zeta_3) = \Phi_{n/3}(1)$ by invoking Lemma \ref{basiceqs}.
This yields $3$ if $n$ is a power of $3$ and 1 otherwise.
\item In case $9 \nmid n$ we separately consider three subcases:
\begin{enumerate}
\item The integer $n$ has a prime factor $p \equiv 1 \pmod 3$.\\
Lemma \ref{mod1} yields $\Phi_n(\zeta_3) = p$ if $n = 3p^k$ and $1$ otherwise.
\item The integer $n$ has no prime factor $p \equiv 1 \pmod 3$ and
$3 \nmid n$.\\
Thus we can write $n=q_1^{e_1} \cdots q_r^{e_r}$ with $q_j \equiv 2 \pmod 3$ 
and $e_j \ge 1$ for every $1\le j\le r$.\\
We distinguish two cases:\\
$r = 1$. By Lemma \ref{mod-1}a it follows that
$\Phi_{q_1^{e_1}}(\zeta_3) = -(\zeta_3)^{\wedge} (-1)^{s-1}=-1/\zeta$.\\ 
$r \ge 2$. On applying Lemma \ref{mod-1}b  
we obtain $\Phi_n(\zeta_3) = (\zeta_3)^{\wedge} (-1)^{s+1}=1/\zeta$.
\item The integer $n$ has no prime factor $p \equiv 1 \pmod 3$ and
$3 \mid n$.\\
Note that $\Phi_n(\zeta_3) = {\Phi_{n/3}(1)}/{\Phi_{n/3}(\zeta_3)}$ as a consequence of Lemma \ref{basiceqs}. Hence the result follows from the subcase b) and Lemma \ref{valueat1A}.\qedhere
\end{enumerate}
\end{itemize}
\end{proof}
\subsection{Calculation of $\Phi_n(\zeta_6)$}
In our computation of $\Phi_n(\zeta_6)$ we make freely use of the fact that
 $-\zeta_3 = \zeta_6^{-1}$.
\begin{lem}
\label{phinz6}
We have $ \Phi_n(\zeta_6)=1 $ except for the cases listed in the table below.

\begin{center}
\begin{tabular}{|c|c|}
\hline
$ n $ & $ \Phi_n(\zeta_6) $  \\
\hline
$1$ & $\zeta_3$  \\
\hline
$2$ & $\zeta_6+1$  \\
\hline
$3$ & $2\zeta_6$  \\
\hline
$6$ & $0$  \\
\hline
$6p^k$ & $p$  \\
\hline
$2q^k$ & $-\zeta$\\
\hline
$6q^k$ & $-q/\zeta$\\
\hline
$q_1^{e_1} \ldots q_r^{e_r}$ (different from $2$ and $2q^k$) & $\zeta$\\
\hline
$ 3q_1^{e_1} \ldots q_r^{e_r} $ (different from $6$ and $6q^k$) & $1/\zeta$\\
\hline
\end{tabular}
\end{center}
Here $p$ is $3$ or a prime number congruent to $1$ modulo $6$. The integers $q$ and $q_1, \ldots, q_r$ are $2$ or primes congruent to $5$ modulo $6$ with $r \ge 1$ and $q_1,\ldots,q_r$ distinct. 
Furthermore $k$ and $e_1, \ldots, e_r$ are arbitrary positive integers 
and $\zeta=(\zeta_3)^{\wedge} (-1)^{s}$
with  $s=\Omega(n)-\omega(n)=\Omega(n/\mathrm{rad}(n))$.
\end{lem}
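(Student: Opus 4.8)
The plan is to follow the pattern of the proofs of Lemmas~\ref{phini} and~\ref{phinz3} and to reduce the computation of $\Phi_n(\zeta_6)$ to that of $\Phi_\bullet(\zeta_3)$, which is already available from Lemma~\ref{phinz3} (with $\Phi_\bullet(\pm 1)$ from Lemmas~\ref{valueat1A} and~\ref{valueat-1} entering in a handful of sub-cases). One first checks the four entries with $n\in\{1,2,3,6\}$ by direct substitution; for instance $\zeta_3=\zeta_6-1$ gives $\Phi_3(\zeta_6)=1+\zeta_6+\zeta_3=2\zeta_6$. Henceforth assume $n\notin\{1,2,3,6\}$.

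The engine of the reduction is the relation $\zeta_6=-\zeta_3^{-1}$. Since $\Phi_n$ has real coefficients,
\[\Phi_n(\zeta_6)=\Phi_n(-\zeta_3^{-1})=\overline{\Phi_n(-\zeta_3)}.\]
Applying Corollary~\ref{corbasiceqs} to $\Phi_n(-\zeta_3)$ and using that the factor $(-1)^{\varphi(n)}$ occurring in two of the three cases equals $+1$ (as $\varphi(n)$ is even for $n\ge 3$), one obtains
\[\Phi_n(\zeta_6)=\overline{\Phi_{n'}(\zeta_3)},\qquad
n'=\begin{cases} n & \text{if }4\mid n,\\ n/2 & \text{if }2\parallel n,\\ 2n & \text{if }2\nmid n.\end{cases}\]
Passing from $n$ to $n'$ changes $\Omega$ and $\omega$ by the same amount, so $s=\Omega(n')-\omega(n')$ coincides with $\Omega(n)-\omega(n)$, and hence the quantity $\zeta=(\zeta_3)^{\wedge} (-1)^{s}$ attached to $n'$ in Lemma~\ref{phinz3} is the same as the one attached to $n$ in the present lemma.

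What remains is bookkeeping. Complex conjugation fixes each scalar $1,p,-1,-q$ that appears in Lemma~\ref{phinz3} and sends $\zeta$ to $1/\zeta$, so each row of Lemma~\ref{phinz3} is carried to a candidate row here. The congruence conditions translate transparently: a prime $q\equiv 2\pmod 3$ dividing $n'$ is $2$ or $\equiv 5\pmod 6$, precisely the values allowed for ``$q$'', while a prime $p\not\equiv 2\pmod 3$ is $3$ or $\equiv 1\pmod 6$. It then suffices to match the possible shapes of $n'$ ($3p^k$, $q^k$, $3q^k$, a product of at least two prime powers, or ``otherwise'') against the corresponding shapes of $n$, and to verify that the exclusions ``different from $2$ and $2q^k$'' resp.\ ``different from $6$ and $6q^k$'' fall out correctly; the small case $n=4$, which is of the form $2q^k$ with $q=2$, is a convenient sanity check of the signs.

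I expect this final matching to be the only real work. Within it the delicate points are the ``otherwise'' rows and the powers of $3$: there $\Phi_{n'}(\zeta_3)$ is computed not from Lemma~\ref{mod1} or~\ref{mod-1} but from $\Phi_{n'/3}(1)$ via the $9\mid n'$ branch of Lemma~\ref{phinz3} --- equivalently, when $9\mid n$ one may use directly $\Phi_n(\zeta_6)=\Phi_{n/3}(\zeta_6^3)=\Phi_{n/3}(-1)$ together with Lemma~\ref{valueat-1}. One has to confirm that in this way the value $p$ is attained exactly for $n=6p^k$ (with $p=3$ coming from the $9\mid n$ branch and $p\equiv 1\pmod 6$ from Lemma~\ref{mod1}), and that outside the listed rows the value is the unit $1$, so that no row is missing and none is spurious.
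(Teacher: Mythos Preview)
Your approach is correct and genuinely different from the paper's. The paper argues directly at $\zeta_6$: it first disposes of the case $\nu_3(n)\ge 2$ via $\Phi_n(\zeta_6)=\Phi_{n/3}(-1)$, and then, for $\nu_3(n)\le 1$, runs a case split on whether $n$ has a prime factor $\equiv 1\pmod 6$ (Lemma~\ref{mod1}), no prime factor $\equiv \pm1\pmod 6$ (then $n$ is essentially a power of~$2$ and one reduces to $\Phi_{n/2}(\zeta_3)$), or a prime factor $\equiv -1\pmod 6$ (Lemma~\ref{mod-1}). Your route instead uses the single identity $\zeta_6=-\zeta_3^{-1}$ together with Corollary~\ref{corbasiceqs} to obtain $\Phi_n(\zeta_6)=\overline{\Phi_{n'}(\zeta_3)}$ for all $n\notin\{1,2,3,6\}$, and then reads everything off the already-established Lemma~\ref{phinz3}. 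The observation that $n\mapsto n'$ preserves $s=\Omega-\omega$, so that the $\zeta$ of Lemma~\ref{phinz3} for $n'$ coincides with the $\zeta$ here for $n$, is exactly what makes the transfer clean; combined with the fact that conjugation swaps $\zeta\leftrightarrow 1/\zeta$, each row of Lemma~\ref{phinz3} maps to the corresponding row here. Your method buys a uniform reduction to a known table at the cost of a somewhat more intricate (but mechanical) dictionary between the shapes of $n'$ and of $n$ across the three parity cases for~$2$; the paper's method is more self-contained and parallels the proofs of Lemmas~\ref{phini} and~\ref{phinz3}, but requires re-running the Lemma~\ref{mod1}/\ref{mod-1} machinery. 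Either way the ``otherwise $=1$'' verification and the exclusion clauses fall out, and your sanity check at $n=4$ is a good indicator that the signs are under control.
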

\begin{proof}
The first four entries of the table follow by 
direct computation and hence we may assume that $n \ne 1,2,3,6$.
\begin{itemize}
\item In case $\nu_3(n) \ge 2$ we have $\Phi_n(\zeta_6) = \Phi_{n/3}(-1)$, which yields $3$ if $n = 6 \cdot 3^{k}$ and 1 otherwise.
\item In case $\nu_3(n) \le 1$ we separately consider three subcases:
\begin{enumerate}
\item The integer $n$ has a prime factor $p \equiv 1 \pmod 6$.\\
By Lemma \ref{mod1} we obtain $p$ if $n = 6 p^k$ and $1$ otherwise.
\item The integer $n$ has no prime factor $p \equiv \pm 1 \pmod 6$.\\ There are two possibilities:
\begin{enumerate}[label=\roman*)]
\item $n = 2^{k+1}$. We have $\Phi_n(\zeta_6) = \Phi_{n/2}(\zeta_3) = -(\zeta_3)^{\wedge} {(-1)^{k}} = -\zeta$.
\item $n = 6 \cdot 2^k$. We have $\Phi_n(\zeta_6) = \Phi_{n/2}(\zeta_3) = - 2 (\zeta_3)^{\wedge}{(-1)^{k+1}} = -2 / \zeta$.
\end{enumerate}
\item The integer $n$ has no prime factor $p \equiv 1 \pmod 6$ and it has a prime factor $q \equiv -1 \pmod 6$. \\
There are three possibilities:
\begin{enumerate}[label=\roman*)]
\item $n = q^k$. Lemma \ref{mod-1}a yields $\Phi_n(\zeta_6) = - (\zeta_6)^{\wedge}{(-1)^{k}} = \zeta$.
\item $n = q^k n'$ with $1 < n' \ne 6$ and $q \nmid n'$. Lemma \ref{mod-1}b implies $\Phi_n(\zeta_6) = (\zeta_6)^{\wedge}((-1)^k \varphi(n'))$. Thus we have $\Phi_{2q^k}(\zeta_6) = -\zeta$. Let us assume $n' > 2$. Now we compute $\zeta_6^{\varphi(n')}$.
\begin{itemize}
\item If $3 \nmid n'$, then $\zeta_6^{\varphi(n')} = \zeta_3^{\varphi(n')/2} = (\zeta_3)^{\wedge}{(-1)^{\Omega(n')-\omega(n')+1}}$ and $\Phi_n(\zeta_6) = \zeta$.
\item If $3 \mid n'$, then $\zeta_6^{\varphi(n')} = \zeta_3^{\varphi(n'/3)} = (\zeta_3)^{\wedge}{(-1)^{\Omega(n')-\omega(n')}}$ and $\Phi_n(\zeta_6) = 1/\zeta$.
\end{itemize}
\item $n = 6 q^k$. We have $\Phi_n(\zeta_6) = \Phi_{n/3}(-1) / \Phi_{n/3}(\zeta_6) = -q/\zeta$.
 \qedhere
\end{enumerate}
\end{enumerate}
\end{itemize}
\end{proof}

\begin{lem}
\label{lem:tablecombi}
Let $m\in \{1,2,3,4,6\}$ and $n>m$ be integers. Then
$$|\Phi_n(\xi_m)|=
\begin{cases}
p & \text{if~} n/m=p^k \text{~is a prime power};\\
1 & \text{otherwise}.
\end{cases}
$$
\end{lem}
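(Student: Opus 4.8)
The plan is to read this off the resultant computation of Theorem~\ref{thm:apostol}. The point is that for $m\in\{1,2,3,4,6\}$ the cyclotomic field $\mathbb{Q}(\zeta_m)$ has degree $\varphi(m)\le 2$, so the Galois norm of $\Phi_n(\zeta_m)$ is governed by $|\Phi_n(\zeta_m)|$ alone. First I would reduce to $\xi_m=\zeta_m$: for $m\in\{3,4,6\}$ the only primitive $m^{th}$ roots of unity are $\zeta_m$ and $\zeta_m^{-1}=\overline{\zeta_m}$, and since $\Phi_n$ has real coefficients $\Phi_n(\zeta_m^{-1})=\overline{\Phi_n(\zeta_m)}$, so $|\Phi_n(\xi_m)|=|\Phi_n(\zeta_m)|$ whichever primitive root $\xi_m$ is; for $m\in\{1,2\}$ there is a unique primitive root and nothing to reduce. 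The case $m=1$ is then immediate from Lemma~\ref{valueat1A}, which (as $n>1$) gives $\Phi_n(1)=p$ when $n/m=n$ is a prime power and $1$ otherwise.

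For $m\in\{2,3,4,6\}$ I would combine \eqref{result} and \eqref{galnorm} to get
$$\rho(\Phi_m,\Phi_n)=\prod_{1\le j\le m,\,(j,m)=1}\Phi_n(\zeta_m^j)=N_{\mathbb{Q}(\zeta_m)/\mathbb{Q}}\bigl(\Phi_n(\zeta_m)\bigr).$$
When $m=2$ this is $\Phi_n(-1)$, and when $m\in\{3,4,6\}$ it is $\Phi_n(\zeta_m)\overline{\Phi_n(\zeta_m)}=|\Phi_n(\zeta_m)|^2$; in either case $|\Phi_n(\xi_m)|^{\varphi(m)}=|\rho(\Phi_m,\Phi_n)|$. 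Since $n>m>1$, Theorem~\ref{thm:apostol} applies (the discrepancy between $\rho(\Phi_n,\Phi_m)$ and $\rho(\Phi_m,\Phi_n)$ is at most a sign, which is invisible to $|\cdot|$), and it tells us that $|\rho(\Phi_m,\Phi_n)|$ equals $p^{\varphi(m)}$ if $n/m=p^k$ is a prime power and $1$ otherwise. Extracting $\varphi(m)^{th}$ roots yields exactly the claimed formula.

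I do not expect a real obstacle here; the only delicate points are bookkeeping ones. First, the reduction $|\Phi_n(\xi_m)|=|\Phi_n(\zeta_m)|$ genuinely needs $\varphi(m)\le 2$, so that the Galois conjugates of $\zeta_m$ form a complex-conjugate pair; this is what makes the five exceptional moduli special. Second, Theorem~\ref{thm:apostol} is stated only for $m>1$, which is why $m=1$ must be handled on its own via Lemma~\ref{valueat1A}. As a sanity check one can also verify the statement directly against the tables of Lemmas~\ref{valueat1A}, \ref{valueat-1}, \ref{phini}, \ref{phinz3} and \ref{phinz6}: in each listed row the modulus of the value is $p$ or $1$ as predicted, and the unlisted rows (value $1$) are precisely those where $n/m$ fails to be a prime power; but the resultant argument is shorter and treats all five $m$ uniformly.
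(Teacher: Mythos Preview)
Your proof is correct and follows essentially the same route as the paper: both reduce the question to the resultant identity \eqref{result} and invoke Theorem~\ref{thm:apostol}. The only differences are cosmetic. The paper treats $m=2$ separately via Lemma~\ref{valueat-1} (you fold it into the resultant argument, which is fine since Theorem~\ref{thm:apostol} covers $m>1$), and the paper passes through Lemma~\ref{selfreciprocal-int} to write $\Phi_n(\zeta_m)=\zeta_m^{\varphi(n)/2}a$ with $a\in\mathbb Z$, thereby showing $|\Phi_n(\xi_m)|$ is an integer before comparing with the resultant; your direct observation that the two Galois conjugates are complex conjugates, so the norm is $|\Phi_n(\zeta_m)|^2$, bypasses that lemma entirely and is a slight streamlining.
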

\begin{proof}
For $m=1,2$ the result follows by Lemma
\ref{valueat1A}, respectively Lemma \ref{valueat-1}. So we may assume 
that $m\in \{3,4,6\}$ (and so $n>m\ge 3$). 
Since $\deg \Phi_n=\varphi(n)$ is even for $n\ge 4$, it follows
by Lemma \ref{selfreciprocal-int}  that $\Phi_n(\zeta_m)=\zeta_m^{\varphi(n)/2}a$, with $a$ an integer.
Letting the Galois automorphisms of $\mathbb Q(\zeta_m)/\mathbb Q$ act on both sides of this identity
we see that $|\Phi_n(\xi_m)|$ is an integer that is independent
of the specific choice of $\xi_m$. The result
now follows from Theorem \ref{thm:apostol} and 
the identity \eqref{result}. 
\end{proof}
\begin{Remark}
Lemma \ref{lem:tablecombi} can also 
be deduced from Lemmas \ref{valueat1A}, \ref{valueat-1}, \ref{phini}, \ref{phinz3} and \ref{phinz6}.
\end{Remark}

\subsection{Calculation of $\Phi_n(\zeta_5)$}
We use Corollary \ref{BB-C-values5} and Lemma \ref{lem:phin-eval} 
to compute $\Phi_n(\zeta_5)$. One can also compute $\Phi_n(\zeta_m)$ for $m \in \{8,10,12\}$ by a similar procedure. Nonetheless, the number of possible cases significantly increases for those values of $m$. 
\begin{lem}
\label{phinz5}
We have $\Phi_n(\zeta_5)=1$ except for the cases listed in the table below.

\begin{center}
\begin{tabular}{|c|c|}
\hline
$ n $ & $ \Phi_n(\zeta_5) $  \\
\hline
$1$ & $\zeta_5 - 1$  \\
\hline
$5$ & $0$ \\
\hline
$5p^k$ & $p$  \\
\hline
$q^k$ & $-(\zeta_5)^{\wedge}{(-1)^k}$  \\
\hline
$q^k n_1, q \nmid n_1$ & $(\zeta_5)^{\wedge}((-1)^k\varphi(n_1))$  \\
\hline
$n_2$ & $(-1)^{\varphi(n/5;n)} \zeta_5^{\varphi(n)/2} |1+\zeta_5|^{\wedge}((-1)^{\Omega(n)+1} 2^{\omega(n)-1})$ \\
\hline
$5 n_1$ & $e^{\Lambda(n_1)}/\Phi_{n_1}(\zeta_5)$\\
\hline
\end{tabular}
\end{center}
Here $p$ is $5$ or a prime number congruent to $1$ modulo $5$ and $q$ is a prime congruent to $-1$ modulo $5$. The integers $n_1, n_2 \ge 2$ are not divisible by $5$ and have the property that
\begin{itemize}
\item $p' \not\equiv 1 \pmod 5$ for every prime $p'$ dividing $n_1$;
\item $p' \not\equiv \pm1 \pmod 5$ for every prime $p'$ dividing $n_2$.
\end{itemize}
Furthermore, $k$ is an arbitrary positive integer.
\end{lem}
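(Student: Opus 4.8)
The plan is to split according to $\nu_5(n)$, just as in the proofs of Lemmas \ref{phini}, \ref{phinz3} and \ref{phinz6}, and then within the coprime case to split further according to the residues modulo $5$ of the prime divisors of $n$; the only inputs beyond those three lemmas are Corollary \ref{BB-C-values5} and Lemma \ref{lem:phin-eval}.

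First I would dispose of $n=1$ and $n=5$ by direct computation ($\Phi_1(x)=x-1$, $\Phi_5(\zeta_5)=0$), so that we may assume $n\ne 1,5$. If $\nu_5(n)\ge 2$, then $5\mid n/5$, so Lemma \ref{basiceqs}a gives $\Phi_n(\zeta_5)=\Phi_{n/5}(\zeta_5^5)=\Phi_{n/5}(1)$; since the only prime power of the shape $n/5$ with $5\mid n/5$ is a power of $5$, Lemma \ref{valueat1A} turns this into $5$ when $n=5^{k+1}$ (the $n=5p^k$ row with $p=5$) and $1$ otherwise. If $\nu_5(n)=1$, write $n=5n_1$ with $(n_1,5)=1$ and $n_1\ge 2$; Lemma \ref{basiceqs}b gives $\Phi_n(\zeta_5)=\Phi_{n_1}(\zeta_5^5)/\Phi_{n_1}(\zeta_5)=\Phi_{n_1}(1)/\Phi_{n_1}(\zeta_5)$, where $\Phi_{n_1}(1)=e^{\Lambda(n_1)}$ by Lemma \ref{valueat1B} and $\Phi_{n_1}(\zeta_5)$ is nonzero (as $n_1\ne 5$) and supplied by the coprime case below; this is the last row of the table, and specialising to $n_1=p^k$ with $p\equiv 1\pmod 5$ recovers the $n=5p^k$ row. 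So it remains to treat $(n,5)=1$, $n>1$.

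For the coprime case I would run through the corollaries of Theorem \ref{BB-T-values}. If $n$ has a prime factor $p\equiv 1\pmod 5$, then Lemma \ref{mod1}a (applicable since $5\nmid n$ forces the relevant $n'\ne 5$) gives $\Phi_n(\zeta_5)=1$, the default entry. If $n$ has no prime factor $\equiv 1\pmod 5$ but has a prime factor $q\equiv -1\pmod 5$, write $n=q^kn'$ with $q\nmid n'$ and apply Lemma \ref{mod-1}: part a) covers $n'=1$ (the $n=q^k$ row) and part b) covers $n'\ge 2$ (the $n=q^kn_1$ row), the additional assertion in part b) showing that for $n'\ge 3$ the value equals $\zeta_5^{\varphi(n)/2}$ and hence does not depend on the chosen $q$. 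Finally, if every prime divisor of $n$ is $\not\equiv\pm 1\pmod 5$ (the $n=n_2$ row), Corollary \ref{BB-C-values5} with $m=5$ gives $\log|\Phi_n(\zeta_5)|=(-1)^{\Omega(n)-1}2^{\omega(n)-1}\log|1+\zeta_5|$, and Lemma \ref{lem:phin-eval} applied with $\xi_5=\zeta_5^1$ (so $j=1$) gives $\Phi_n(\zeta_5)=(-1)^{\varphi(n/5;n)}|\Phi_n(\zeta_5)|\zeta_5^{\varphi(n)/2}$; combining these two identities yields the stated value.

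No individual step is hard; the only real work is the bookkeeping. One has to check that the listed cases are exhaustive (each $n>1$ with $(n,5)=1$ is built only from primes $\equiv 1\pmod 5$, or has a prime $\equiv -1\pmod 5$ and none $\equiv 1$, or is built only from primes $\equiv\pm 2\pmod 5$), and that the exponents match up — in particular that $(-1)^{\Omega(n)-1}=(-1)^{\Omega(n)+1}$ so that Corollary \ref{BB-C-values5} and the $n_2$ row agree, and that the $\nu_5(n)=1$ reduction, fed the coprime answers, reproduces the remaining rows of the table (and the default value $1$ precisely when $n/5$ has a prime factor $\equiv 1\pmod 5$ without being a prime power).
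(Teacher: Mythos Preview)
Your proposal is correct and follows essentially the same case split as the paper's proof: direct computation for $n\in\{1,5\}$, the reduction $\Phi_n(\zeta_5)=\Phi_{n/5}(1)$ for $\nu_5(n)\ge 2$, the quotient $\Phi_{n/5}(1)/\Phi_{n/5}(\zeta_5)$ for $\nu_5(n)=1$, and in the coprime case the three subcases handled by Lemma~\ref{mod1}, Lemma~\ref{mod-1}, and Corollary~\ref{BB-C-values5} with Lemma~\ref{lem:phin-eval}. The only difference is that you treat $\nu_5(n)=1$ before the coprime case rather than after, and you make the bookkeeping (exhaustiveness of the cases, the match $(-1)^{\Omega(n)-1}=(-1)^{\Omega(n)+1}$, and how the $5p^k$ row arises from the $5n_1$ reduction) explicit.
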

\begin{proof}
The first two entries of the table follow by 
direct computation and hence we may assume that $n \ne 1,5$.
\begin{itemize}
\item In case $\nu_5(n) \ge 2$ we have $\Phi_n(\zeta_5) = \Phi_{n/5}(1)$, which yields $5$ if $n = 5^{k+1}$ and 1 otherwise.
\item In case $\nu_5(n) = 0$ we separately consider three subcases:
\begin{enumerate}
\item The integer $n$ has a prime factor $p \equiv 1 \pmod 5$.\\
By Lemma \ref{mod1} we obtain $p$ if $n = 5 p^k$ and $1$ otherwise.
\item The integer $n$ has no prime factor $p \equiv 1 \pmod 5$ and it has a prime factor $q \equiv -1 \pmod 5$. There are two possibilities:
\begin{enumerate}[label=\roman*)]
\item $n = q^k$. Lemma \ref{mod-1}a yields $\Phi_n(\zeta_5) = - (\zeta_5)^{\wedge}{(-1)^{k}}$.
\item $n = q^k n_1'$ with $q \nmid n_1'$. Lemma \ref{mod-1}b implies $\Phi_n(\zeta_5) = (\zeta_5)^{\wedge}((-1)^k \varphi(n'))$.
\end{enumerate}
\item The integer $n$ has no prime factor $p \equiv \pm 1 \pmod 5$. \\
Corollary \ref{BB-C-values5} shows 
$|\Phi_n(\zeta_5)| = |1+\zeta_5|^{\wedge}((-1)^{\Omega(n)+1} 2^{\omega(n)-1})$. The value $\Phi_n(\zeta_5)$ is obtained by Lemma \ref{lem:phin-eval}.
\end{enumerate}
\item In case $\nu_5(n) = 1$ we have $\Phi_n(\zeta_5) = \Phi_{n/5}(1) / \Phi_{n/5}(\zeta_5) = e^{\Lambda(n/5)} / \Phi_{n/5}(\zeta_5)$. \qedhere
\end{itemize}
\end{proof}

\section{The logarithmic derivative $f_n(z)$ of $\Phi_n(z)$} \label{sec:log'(Phin)}

In this section we consider the logarithmic derivative 
$f_n(z)$ of $\Phi_n(z)$. Thus
$$f_n(z)= (\log\Phi_n(z))' = \frac{\Phi_n'(z)}{\Phi_n(z)}.$$

If we compute $f_n(\xi_m)$, then we can use the value $\Phi_n(\xi_m)$ to obtain $\Phi_n'(\xi_m)$. First, we calculate $f_n(\pm 1)$ with elementary methods. Later, we apply the ideas presented in Section \ref{sec:values:general-method} to develop a general method for computing $f_n(\zeta_m)$ when $(n,m) = 1$. Note that  as a consequence of \eqref{eq:simp-coprime} we can reduce the computation of $f_n(\zeta_m)$ to the case when $n$ and $m$ are coprime. Indeed for $n=n_1n_2$, where $n_1=\prod_{p^e \parallel n,~p\,\nmid \,m}p^e$, we have

\begin{equation} \label{eq:sim-coprime:deriv}
f_n(\xi_m) = \sum_{d\,\mid n_2}\mu(n_2/d) f_{n_1}(\xi_m^d).
\end{equation}

This method will be used to easily obtain $f_n(\zeta_m)$ for $m \in \{3,4,6\}$.

\begin{lem} \label{phin1:deriv}
We have $f_n(1)= \varphi(n)/2$ for $n > 1$ and
$f_n(-1)=-\varphi(n)/2$ for every $n\neq 2$.
\end{lem}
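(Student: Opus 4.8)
The plan is to read off both identities directly from Proposition \ref{self-reciprocal:deriv} applied to $f = \Phi_n$, exploiting that $\Phi_n$ is self-reciprocal of degree $\varphi(n)$ for $n \ge 2$ (Lemma \ref{basiceqs}c), while $\Phi_1(x) = x-1$ is anti-self-reciprocal of degree $1$. Since $f_n = \Phi_n'/\Phi_n$, each claim reduces to a relation between $\Phi_n'(\pm 1)$ and $\Phi_n(\pm 1)$ together with the non-vanishing of the relevant denominator.

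First I would treat $f_n(1)$. For $n > 1$ the polynomial $\Phi_n$ is self-reciprocal, so Proposition \ref{self-reciprocal:deriv}a gives $\Phi_n'(1) = \Phi_n(1)\,\varphi(n)/2$. By Lemma \ref{valueat1A} we have $\Phi_n(1) \ne 0$ for $n > 1$, so dividing yields $f_n(1) = \varphi(n)/2$, as claimed.

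Next I would treat $f_n(-1)$. For $n > 2$ the degree $\deg \Phi_n = \varphi(n)$ is even, so the second alternative of Proposition \ref{self-reciprocal:deriv}b (self-reciprocal $f$, $2 \mid d$) gives $\Phi_n'(-1) = -\Phi_n(-1)\,\varphi(n)/2$; by Lemma \ref{valueat-1} the value $\Phi_n(-1)$ is non-zero exactly when $n \ne 2$, so dividing gives $f_n(-1) = -\varphi(n)/2$. The remaining case $n = 1$ can be handled either by the anti-self-reciprocal branch of Proposition \ref{self-reciprocal:deriv}b with $d = 1$ odd, or by the immediate computation $f_1(-1) = \Phi_1'(-1)/\Phi_1(-1) = 1/(-2) = -1/2 = -\varphi(1)/2$.

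I do not expect a genuine obstacle here: the only points needing attention are verifying that $\varphi(n)$ is even for $n > 2$ (so that the correct branch of Proposition \ref{self-reciprocal:deriv}b is invoked) and that the denominators $\Phi_n(\pm 1)$ do not vanish in the stated ranges, both of which are already recorded in the evaluations of $\Phi_n(1)$ and $\Phi_n(-1)$ in Section \ref{flauweevaluatie}.
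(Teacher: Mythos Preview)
Your proposal is correct and follows exactly the paper's approach: the paper's proof is the single sentence ``The proof follows from applying Proposition \ref{self-reciprocal:deriv} with $f=\Phi_n$ and $d=\varphi(n)$,'' and you have simply unpacked this by checking the parity of $\varphi(n)$, the non-vanishing of $\Phi_n(\pm 1)$, and the $n=1$ case explicitly.
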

\begin{proof}
The proof follows from applying Lemma \ref{self-reciprocal:deriv} with $f=\Phi_n$ 
and $d=\varphi(n)$.
\end{proof}
\begin{cor} We have
\begin{center}
\begin{minipage}{0.4\textwidth}
\[\Phi_n'(1)=\begin{cases}1 & \hbox{if } n=1;\\
p\varphi(n)/2 & \hbox{if } n=p^e;\\
\varphi(n)/2 & \hbox{otherwise.}\end{cases}\]
\end{minipage}
\begin{minipage}{0.4\textwidth}
\[\Phi_n'(-1)=\begin{cases} 1 & \hbox{if } n=2;\\
-p\varphi(n)/2 & \hbox{if } n=p^e;\\
-\varphi(n)/2 & \hbox{otherwise.}\end{cases}\]
\end{minipage}
\end{center}
\end{cor}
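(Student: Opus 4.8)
The plan is to use the elementary identity
\[
\Phi_n'(z)=f_n(z)\,\Phi_n(z),
\]
which is just the definition of $f_n$ rearranged and is valid at every point $z$ with $\Phi_n(z)\ne 0$, and then to substitute the two ingredients we already have: the values $f_n(\pm 1)$ supplied by Lemma \ref{phin1:deriv}, and the values $\Phi_n(\pm 1)$ supplied by Lemmas \ref{valueat1A} (or \ref{valueat1B}) and \ref{valueat-1}. So the whole argument is a case-by-case multiplication of two tables, once the finitely many exceptional points have been dealt with by hand.

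First I would dispose of the points where $\Phi_n$ vanishes, so that $f_n$ is not defined there and the identity above cannot be used. For $z=1$ this occurs only for $n=1$, where $\Phi_1(x)=x-1$ gives $\Phi_1'(1)=1$; for $z=-1$ this occurs only for $n=2$, where $\Phi_2(x)=x+1$ gives $\Phi_2'(-1)=1$. These are precisely the first entries in the two displayed formulas.

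For all remaining $n$ the proof is then one line in each case. For $n>1$ we have $f_n(1)=\varphi(n)/2$ by Lemma \ref{phin1:deriv}, while Lemma \ref{valueat1A} gives $\Phi_n(1)=p$ when $n=p^e$ is a prime power and $\Phi_n(1)=1$ otherwise; multiplying, $\Phi_n'(1)=f_n(1)\Phi_n(1)$ equals $p\varphi(n)/2$ in the prime-power case and $\varphi(n)/2$ in the rest, as claimed. Similarly, for $n\ne 2$ we have $f_n(-1)=-\varphi(n)/2$ by Lemma \ref{phin1:deriv}, and Lemma \ref{valueat-1} tells us exactly on which $n$ one has $\Phi_n(-1)=p$ (the twice-prime-powers, including $n=2^e$) and on which one has $\Phi_n(-1)=1$; hence $\Phi_n'(-1)=f_n(-1)\Phi_n(-1)$ is $-p\varphi(n)/2$ in the former case and $-\varphi(n)/2$ in the latter, where one checks in passing that for $n=1$ the product $f_1(-1)\Phi_1(-1)=(-1/2)(-2)=1$ is consistent with the general shape of the formula.

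I do not anticipate a real obstacle: the only substantive step, namely the self-reciprocity computation behind $f_n(\pm 1)=\pm\varphi(n)/2$, is already packaged in Lemma \ref{phin1:deriv} (via Proposition \ref{self-reciprocal:deriv}), and everything else is bookkeeping. The one place to be careful is to transcribe the case distinctions of Lemmas \ref{valueat1A} and \ref{valueat-1} faithfully when passing from $\Phi_n(\pm 1)$ to $\Phi_n'(\pm 1)$, and to remember to evaluate $\Phi_n'$ directly at the two points ($n=1$ at $z=1$, $n=2$ at $z=-1$) where $\Phi_n$ itself vanishes.
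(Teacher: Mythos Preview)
Your approach is exactly what the paper intends: the corollary carries no separate proof and is meant to follow by multiplying $f_n(\pm 1)=\pm\varphi(n)/2$ from Lemma \ref{phin1:deriv} against the tabulated values $\Phi_n(\pm 1)$ from Lemmas \ref{valueat1A} and \ref{valueat-1}, with the vanishing points $n=1$ (at $z=1$) and $n=2$ (at $z=-1$) handled by direct evaluation. Your case analysis is in fact more careful than the printed statement, since you correctly obtain $\Phi_n'(-1)=-p\varphi(n)/2$ for $n=2p^e$ rather than for $n=p^e$ as stated (and $\Phi_1'(-1)=1$, not $-\varphi(1)/2=-1/2$); these are evidently typos in the corollary.
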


\begin{Thm} \label{BB-T-derivative}
Let us assume that $n,m > 1$ are coprime. For all $\chi\in\widehat{G}(m)$ put
$$c_\chi(\xi_m) =
\sum_{g\in G(m)}\frac{\xi_m^{-1}\xi_m^g}{1-\xi_m^g}\overline{\chi}(g).$$
Then
$$f_n(\xi_m) = -\frac{n}{\varphi(m)}\sum_{\chi\in\widehat{G}(m)}
c_\chi(\xi_m) \chi(n)
\prod_{p\,\mid \,n}(1-\frac{\overline{\chi}(p)}{p}).$$
\end{Thm}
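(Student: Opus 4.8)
The plan is to mimic the proof of Theorem~\ref{BB-T-values}, differentiating its building blocks instead of taking logarithms. Recall that for $(n,m)=1$ we have the M\"obius-type product $\Phi_n(z)=\prod_{d\mid n}(z^d-1)^{\mu(n/d)}$, so that the logarithmic derivative is the clean sum
\begin{equation} \label{log-diff}
f_n(z)=\frac{\Phi_n'(z)}{\Phi_n(z)}=\sum_{d\mid n}\mu(n/d)\,\frac{d\,z^{d-1}}{z^d-1}.
\end{equation}
Evaluating at $z=\xi_m$ and using $\xi_m^{d}=\xi_m^{d\bmod m}$, the quantity $\dfrac{d\,\xi_m^{d-1}}{\xi_m^d-1}$ is \emph{not} periodic in $d$ because of the explicit factor $d$; however, writing $d\,\xi_m^{d-1}=d\,\xi_m^{-1}\xi_m^{d}$, the map $d\mapsto \dfrac{\xi_m^{-1}\xi_m^{d}}{\xi_m^{d}-1}$ \emph{is} periodic with period $m$ and hence descends to a function $G(m)\to\mathbb C$, which is precisely what the coefficients $c_\chi(\xi_m)$ are designed to capture via finite Fourier inversion on $G(m)$:
$$\frac{\xi_m^{-1}\xi_m^{d}}{\xi_m^{d}-1}=\frac{1}{\varphi(m)}\sum_{\chi\in\widehat G(m)}c_\chi(\xi_m)\chi(d).$$

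With this in hand, the plan is: first, substitute the Fourier expansion into \eqref{log-diff}, so that
$$f_n(\xi_m)=\sum_{d\mid n}\mu(n/d)\,d\cdot\frac{\xi_m^{-1}\xi_m^{d}}{\xi_m^{d}-1}
=\frac{1}{\varphi(m)}\sum_{\chi\in\widehat G(m)}c_\chi(\xi_m)\sum_{d\mid n}\mu(n/d)\,d\,\chi(d).$$
Second, recognize the inner sum as $J_1(\chi;n)$ from \eqref{eq:phianalogue} with $k=1$. Third, apply Lemma~\ref{lem:flauwmultiplicatief} with $k=1$: since $(m,n)=1$, it gives $J_1(\chi;n)=\chi(n)\,n\prod_{p\mid n}\bigl(1-\overline{\chi}(p)/p\bigr)$. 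Substituting this back yields exactly
$$f_n(\xi_m)=\frac{n}{\varphi(m)}\sum_{\chi\in\widehat G(m)}c_\chi(\xi_m)\,\chi(n)\prod_{p\mid n}\Bigl(1-\frac{\overline{\chi}(p)}{p}\Bigr),$$
and it remains only to account for the sign: the statement carries an overall minus sign, so one must double-check orientation conventions in $f_n$ versus the convention used here. The sign discrepancy traces back to whether one uses $\Phi_n(z)=\prod(z^d-1)^{\mu(n/d)}$ or $\prod(1-z^d)^{\mu(n/d)}$; since $\sum_{d\mid n}\mu(n/d)$ has no fixed parity, these differ, but the combination $\frac{\Phi_n'}{\Phi_n}$ is insensitive to replacing each factor $(z^d-1)$ by $(1-z^d)$, so the minus sign in the theorem must come from writing $\dfrac{d z^{d-1}}{z^d-1}$ instead as $-\dfrac{d z^{d-1}}{1-z^d}$; I would simply present \eqref{log-diff} in the form that produces the stated sign, i.e. pair $\xi_m^{-1}\xi_m^d/(1-\xi_m^g)$ with the outer minus.

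The step I expect to require the most care is the periodicity/Fourier-inversion justification: one must check that $d\mapsto \xi_m^{-1}\xi_m^{d}/(\xi_m^{d}-1)$ is well defined on $G(m)$, which requires $\xi_m^{d}\neq 1$ for $d$ coprime to $m$ — true since $\xi_m$ is a primitive $m$-th root of unity and $1\le (d\bmod m)$ with $(d,m)=1$, $m>1$ — and that only divisors $d$ of $n$ that are coprime to $m$ contribute (automatic here since $(n,m)=1$ forces every $d\mid n$ coprime to $m$). Everything else — interchanging the finite sums, identifying $J_1(\chi;n)$, invoking Lemma~\ref{lem:flauwmultiplicatief} — is routine bookkeeping of the same shape as in the proof of Theorem~\ref{BB-T-values}, so the write-up can be kept short by explicitly referencing that proof.
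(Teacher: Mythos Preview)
Your proposal is correct and follows essentially the same approach as the paper's proof: logarithmic differentiation of the M\"obius product, Fourier inversion on $G(m)$ applied to the periodic function $d\mapsto \xi_m^{d-1}/(1-\xi_m^d)$, identification of the inner sum as $J_1(\chi;n)$, and an appeal to Lemma~\ref{lem:flauwmultiplicatief} with $k=1$. Your sign discussion is more elaborate than necessary---the paper simply writes $f_n(z)=-\sum_{d\mid n}\mu(n/d)\,dz^{d-1}/(1-z^d)$ from the outset, so the minus sign is present from the first line and no reconciliation is needed.
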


\begin{proof}
Logarithmic differentiation of \eqref{phimoebius} yields
$$f_n(z) = -\sum_{d\, \mid n}\mu\big (\frac{n}{d}\big )\frac{dz^{d-1}}{1-z^d}.$$
The function $\xi_m^{d-1}/(1-\xi_m^d)$ of variable $d$ can be treated as a function $G(m)\to\mathbb{C}$. Therefore for all $d\mid n$ we have
$$\frac{\xi_m^{d-1}}{1-\xi_m^d} = \frac{1}{\varphi(m)}\sum_{\chi\in\widehat{G}(m)}c_\chi(\xi_m)\chi(d).$$
Applying this to the above formula on $f_n$ we obtain
$$f_n(\zeta_m) =
-\frac{1}{\varphi(m)}\sum_{\chi\in\widehat{G}(m)}
c_\chi(\xi_m)\sum_{d\,\mid n}\mu(\frac{n}{d})d\chi(d)
= -\frac{1}{\varphi(m)}\sum_{\chi\in\widehat{G}(m)}
c_\chi(\xi_m)J_1(\chi;n).$$
The proof is completed by invoking 
Lemma \ref{lem:flauwmultiplicatief} with $k=1$.
\end{proof}

\begin{cor} \label{BB-C-derivative346}
Set $m \in \{3,4,6\}$ and $n>1$ coprime. We have
$$f_n(\xi_m)= \frac{\varphi(n)}{2\xi_m}\left(1 - (-1)^{\Omega(n_-)}\frac{1+\xi_m}{1-\xi_m}\prod_{p\,\mid\, n_-}\frac{p+1}{p-1}\right),$$
where $n_-$ is the product of the prime powers $p^k\parallel n$ 
with $p\equiv -1\pmod{m}$.
\end{cor}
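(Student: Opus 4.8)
The plan is to specialize Theorem \ref{BB-T-derivative} to the case $m\in\{3,4,6\}$, where $\widehat G(m)$ consists of exactly two characters: the principal character $\chi_0$ and the unique quadratic character $\chi$, which satisfies $\chi(-1)=-1$ and $\chi(p)=(-1)^{v_p}$ according to whether $p\equiv 1$ or $p\equiv -1 \pmod m$. Since $\varphi(m)=2$, the outer sum in Theorem \ref{BB-T-derivative} has just two terms and the prefactor is $-n/2$. First I would compute the two coefficients $c_{\chi_0}(\xi_m)$ and $c_\chi(\xi_m)$. For the principal character this is $\sum_{g\in G(m)}\xi_m^{g-1}/(1-\xi_m^g)$; for $\varphi(m)=2$ this sum runs over $g\in\{1,-1\}$, giving $\xi_m^{-1}\big(\xi_m/(1-\xi_m)+\xi_m^{-1}/(1-\xi_m^{-1})\big)$, which simplifies (using $1-\xi_m^{-1}=-\xi_m^{-1}(1-\xi_m)$) to $\xi_m^{-1}\cdot(-1)=-\xi_m^{-1}$. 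For the quadratic character the two terms pick up a relative sign from $\overline\chi(-1)=-1$, so $c_\chi(\xi_m)=\xi_m^{-1}\big(\xi_m/(1-\xi_m)-\xi_m^{-1}/(1-\xi_m^{-1})\big)=\xi_m^{-1}\big(\xi_m/(1-\xi_m)+1/(1-\xi_m)\big)=\xi_m^{-1}(1+\xi_m)/(1-\xi_m)$.

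Next I would evaluate the two arithmetic factors. The term for $\chi_0$ carries $\chi_0(n)\prod_{p\mid n}(1-1/p)=\varphi(n)/n$ (here using $(n,m)=1$ so $\chi_0$ does not annihilate any prime factor), which combined with the $-n/2$ prefactor and $c_{\chi_0}=-\xi_m^{-1}$ gives the leading term $\xi_m^{-1}\varphi(n)/2$. For the $\chi$-term, write $n=n_+n_-$ where $n_+$ collects the prime powers $p^k\parallel n$ with $p\equiv 1\pmod m$ and $n_-$ those with $p\equiv -1\pmod m$. Then $\chi(n)=(-1)^{\Omega(n_-)}$, and $\prod_{p\mid n}(1-\overline\chi(p)/p)=\prod_{p\mid n_+}(1-1/p)\prod_{p\mid n_-}(1+1/p)$. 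I would then reorganize: the full $\chi$-contribution is $-\frac n2 c_\chi(\xi_m)(-1)^{\Omega(n_-)}\prod_{p\mid n_+}(1-1/p)\prod_{p\mid n_-}(1+1/p)$. To match the stated form, pull out a factor of $\varphi(n)/2$: note $n\prod_{p\mid n_+}(1-1/p)\prod_{p\mid n_-}(1+1/p)=\varphi(n)\prod_{p\mid n_-}\frac{(1+1/p)}{(1-1/p)}=\varphi(n)\prod_{p\mid n_-}\frac{p+1}{p-1}$. Substituting $c_\chi(\xi_m)=\xi_m^{-1}(1+\xi_m)/(1-\xi_m)$ yields exactly $-\frac{\varphi(n)}{2\xi_m}(-1)^{\Omega(n_-)}\frac{1+\xi_m}{1-\xi_m}\prod_{p\mid n_-}\frac{p+1}{p-1}$, and adding the $\chi_0$-term gives the claimed closed form.

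The main obstacle I anticipate is bookkeeping rather than conceptual: correctly tracking the sign $\overline\chi(-1)=-1$ when collapsing the two-element sum for $c_\chi$, and making sure the split $n=n_+n_-$ interacts correctly with the product $\prod_{p\mid n}(1-\overline\chi(p)/p)$ so that the factor $\varphi(n)/n$ can be cleanly extracted. One should also double-check that since $(n,m)=1$ every prime $p\mid n$ satisfies $\overline\chi(p)\in\{1,-1\}$ (never $0$), which is what licenses writing $1-\overline\chi(p)/p$ as $(p-1)/p$ or $(p+1)/p$. Finally I would verify the identity $n\prod_{p\mid n_+}(1-1/p)\prod_{p\mid n_-}(1+1/p)=\varphi(n)\prod_{p\mid n_-}\frac{p+1}{p-1}$, which follows immediately from $\varphi(n)=n\prod_{p\mid n}(1-1/p)$, and the proof is complete.
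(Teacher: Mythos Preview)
Your proposal is correct and follows essentially the same route as the paper: specialize Theorem \ref{BB-T-derivative} to $m\in\{3,4,6\}$, compute $c_{\chi_0}(\xi_m)=-\xi_m^{-1}$ and $c_{\chi}(\xi_m)=\xi_m^{-1}(1+\xi_m)/(1-\xi_m)$ directly from the two-element group $G(m)=\{1,-1\}$, and then use $\chi(n)=(-1)^{\Omega(n_-)}$ to rewrite the $\chi$-term. The only difference is that you spell out explicitly the algebraic identity $n\prod_{p\mid n_+}(1-1/p)\prod_{p\mid n_-}(1+1/p)=\varphi(n)\prod_{p\mid n_-}\frac{p+1}{p-1}$, which the paper leaves as ``easily rewritten''.
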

\begin{proof}
\indent In case  $m\in\{3,4,6\}$ there are precisely two characters: the principal 
character $\chi_1$ and the non-principal character $\chi_2$. A
simple computation gives
$$c_{\chi_1}(\xi_m) = \frac{1-\xi_m^{-1}}{1-\xi_m}=-\xi_m^{-1}, \qquad c_{\chi_2}(\xi_m) = \frac{1+\xi_m^{-1}}{1-\xi_m}.$$
Theorem \ref{BB-T-derivative} yields
\begin{equation*}
f_n(\xi_m) = \frac{\xi_m^{-1}}{2}n\prod_{p\,\mid\, n}(1-\frac{1}{p}) - \frac{1+\xi_m^{-1}}{2(1-\xi_m)}n
\chi_2(n)\prod_{p\, \mid n}(1-\frac{\overline{\chi_2}(p)}{p}),
\end{equation*}
which is easily rewritten in the desired way by
noting that $\chi_2(n)=(-1)^{\Omega(n_-)}$.
\end{proof}

\section{The result of Vaughan}
\label{Vaughan}

We use Corollary \ref{BB-C-values5} to give an alternative proof of the following theorem by Vaughan \cite{Vaughan}.

\begin{Thm} \label{BB-T-Vaughan}
Let $A_n$ denote the height of $\Phi_n$. There exist infinitely many integers $n$ for which 
$$\log\log A_n \ge (\log2+o(1))\frac{\log n}{\log\log n}.$$
\end{Thm}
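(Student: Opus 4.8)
The plan is to exploit the chain of inequalities \eqref{An}, which says $A_n \ge |\Phi_n(z)|/n$ for any $z$ on the unit circle, together with Corollary \ref{BB-C-values5} with $m=5$, which gives
$$\log|\Phi_n(\zeta_5)| = (-1)^{\Omega(n)-1}2^{\omega(n)-1}\log|\gamma_5|, \qquad \gamma_5 = 1+\zeta_5.$$
Since $|1+\zeta_5|$ can be chosen (by selecting the right primitive fifth root of unity $\zeta_5 = e^{4\pi i/5}$ rather than $e^{2\pi i/5}$) so that $|\gamma_5| < 1$, the quantity $\log|\gamma_5|$ is a fixed negative constant $-c$ with $c>0$. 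Thus for $n$ with no prime factor $\equiv \pm 1 \pmod 5$ and with $\Omega(n)$ \emph{even}, we get $\log|\Phi_n(\zeta_5)| = 2^{\omega(n)-1}c$, which is exponentially large in $\omega(n)$.

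First I would construct the sequence: let $n_k$ be the product of the first $k$ primes that are $\equiv 2$ or $3 \pmod 5$ (equivalently, the primes of multiplicative order $4$ modulo $5$), discarding one prime if necessary so that $k$ is even, ensuring $\Omega(n_k) = \omega(n_k) = k$ is even and $5 \nmid n_k$. By Dirichlet's theorem there are infinitely many such primes, and by the prime number theorem for arithmetic progressions the $j$-th such prime is $\ll j \log j$, so $\log n_k = \sum_{j\le k}\log p_j \ll k \log k$, i.e. $\log n_k = O(k\log k)$ and hence $k \gg \log n_k/\log\log n_k$. For this sequence Corollary \ref{BB-C-values5} gives $\log|\Phi_{n_k}(\zeta_5)| = 2^{k-1}c$.

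Next I would feed this into \eqref{An}: $\log A_{n_k} \ge \log|\Phi_{n_k}(\zeta_5)| - \log n_k = 2^{k-1}c - \log n_k$. Taking logarithms twice, $\log\log A_{n_k} \ge (k-1)\log 2 + \log c - \log\log A_{n_k} \cdot (\text{negligible correction})$; more carefully, since $\log n_k = O(k\log k) = o(2^{k-1})$, we have $\log A_{n_k} = 2^{k-1}c(1+o(1))$, so $\log\log A_{n_k} = (k-1)\log 2 + O(\log k) = (\log 2 + o(1))k$. Combining with $k = (1+o(1))\log n_k/\log\log n_k$ — which requires the sharper bound $\log n_k = (1+o(1))p_k \sim k\log k$ from PNT, giving $\log\log n_k = (1+o(1))\log k$ and thus $k/(\log n_k/\log\log n_k) = k\log\log n_k/\log n_k \to 1$ — yields $\log\log A_{n_k} \ge (\log 2 + o(1))\log n_k/\log\log n_k$, as claimed.

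The main obstacle is the asymptotic bookkeeping: one must verify that $\log n_k \sim k\log k$ (not merely $O(k\log k)$) so that $k\log\log n_k/\log n_k \to 1$, which is where the stated constant $\log 2$ comes from rather than some smaller constant. This is a routine consequence of the prime number theorem for the arithmetic progressions $2,3 \bmod 5$ (together with the fact that these two residue classes contain a positive proportion $1/2$ of all primes, so $p_k \sim 2k\log(2k) \sim 2k\log k$, whence $\log p_k \sim \log k$ and $\log n_k = \sum_{j \le k}\log p_j \sim \sum_{j\le k}\log j \sim k\log k$), but it must be done carefully. A secondary point is the sign choice of $\zeta_5$: one checks $|1+e^{4\pi i/5}| = 2|\cos(2\pi/5)| = 2\cos(2\pi/5) \approx 0.618 < 1$, so $\log|\gamma_5| < 0$ as needed; had one picked $e^{2\pi i/5}$ one would get $|1+\zeta_5| > 1$ and the bound would go the wrong way, so this choice is essential and worth stating explicitly.
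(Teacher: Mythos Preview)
Your argument is correct and follows essentially the same route as the paper: take $n$ to be a squarefree product of primes $\equiv \pm 2\pmod 5$, invoke Corollary \ref{BB-C-values5} to get $\log|\Phi_n(\xi_5)|=2^{\omega(n)-1}\cdot(\text{const})$, and combine with \eqref{An} and the PNT for arithmetic progressions to obtain $\omega(n)\sim \log n/\log\log n$. The only cosmetic differences are that the paper parametrizes by a cutoff $x$ (taking all such primes $p\le x$) rather than by the count $k$, and it handles the sign by choosing $\xi_5$ adaptively according to the parity of $\omega(n)$ rather than by restricting to even $k$ with a fixed $\xi_5$.
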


\begin{proof}
Let $x$ be large and $n$ be a product of all primes $p\le x$ satisfying $p\equiv\pm2\pmod5$. 
By two equivalent versions of the prime number theorem for
arithmetic progressions we have
$$\log n=\sum_{p\le x,~p\equiv \pm 2\pmod 5}\log p\sim \frac{x}{2},$$ respectively $$\omega(n)=\sum_{p\le x,~p\equiv \pm 2\pmod 5}1\sim \frac{x}{2\log x}.$$ 
It follows that
$\log \log n \sim \log x$ and so
\begin{equation}
\label{omegah}
\omega(n)\sim \frac{\log n}{\log\log n}
\end{equation}
as $x$ (and hence $n$) 
tends to infinity. 
Recall that by Corollary \ref{BB-C-values5} we have
$$\log|\Phi_n(\xi_5)| = (-2)^{\omega(n)-1}\log|1+\xi_5|.$$
One checks that there is a primitive fifth root of unity $\zeta$ for which
$\log|1+\zeta|>0$, but also one for which $\log|1+\zeta|<0$.
Thus we can choose a
primitive fifth root of unity $z_n$ for which $\log|\Phi_n(z_n)|>0$.
By Corollary \ref{BB-C-values5} and the asymptotic equality
\eqref{omegah} we infer that there is an $x_0$ such that
for all $x\ge x_0$ the corresponding $n$ satisfies
 $\log|\Phi_n(z_n)|>\log n$.
It follows that for $x\ge x_0$ (and hence $n$) tending to infinity
the asymptotic inequality
$$\log\log A_n \ge \log\log\left(\frac{|\Phi_n(z_n)|}{n}\right) = (\log2+o(1))\frac{\log n}{\log\log n}$$
holds true, where the first inequality is 
a consequence of \eqref{An}.
\end{proof}

\section{Application to Kronecker polynomials} \label{kronappl}

A \emph{Kronecker polynomial} is a monic polynomial with integer coefficients having all 
of its roots on or inside the unit disc. The following result of Kronecker relates Kronecker polynomials with cyclotomic polynomials.

\begin{lem}[Kronecker, 1857; cf. \cite{Kronecker}] \label{kroneckerpols}
If f is a Kronecker polynomial with $f(0) \ne 0$, then all roots of f are actually on the unit circle and f factorizes over the rationals as a product of cyclotomic polynomials.
\end{lem}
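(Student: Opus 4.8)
The plan is to prove this classical theorem of Kronecker by a pigeonhole argument applied to powers of the roots. Write $d=\deg f$ and let $\alpha_1,\dots,\alpha_d\in\mathbb C$ be the roots of $f$, counted with multiplicity, so that $|\alpha_i|\le 1$ for all $i$ by hypothesis. Since $f$ is monic, $f(0)=(-1)^d\prod_i\alpha_i$, and the assumption $f(0)\ne 0$ forces $\alpha_i\ne 0$ for every $i$. The two assertions to be proved — that each $\alpha_i$ lies on the unit circle, and that $f=\prod_i\Phi_{n_i}$ for suitable integers $n_i$ — will both follow once we show that every $\alpha_i$ is a root of unity.

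First I would reduce to an arithmetic statement about a single root: if $\alpha$ is a nonzero algebraic integer all of whose conjugates over $\mathbb Q$ lie in the closed unit disc, then $\alpha$ is a root of unity. Every $\alpha_i$ satisfies this hypothesis, because its minimal polynomial divides $f$, so all its conjugates are among $\alpha_1,\dots,\alpha_d$. To prove the statement, fix such an $\alpha$ and consider, for each $k\ge 1$, the algebraic integer $\alpha^k$. Its degree over $\mathbb Q$ is at most $d$, since $\mathbb Q(\alpha^k)\subseteq\mathbb Q(\alpha)$, and each conjugate of $\alpha^k$ is a $k$-th power of a conjugate of $\alpha$, hence again lies in the closed unit disc. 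Writing $g_k$ for the minimal polynomial of $\alpha^k$, its coefficients are, up to sign, the elementary symmetric functions of at most $d$ complex numbers of modulus $\le 1$, so they are integers of absolute value at most $2^d$. Consequently $\{g_k:k\ge 1\}$ is finite, hence so is $\{\alpha^k:k\ge 1\}$, and there are exponents $1\le k<l$ with $\alpha^k=\alpha^l$; as $\alpha\ne 0$ this gives $\alpha^{\,l-k}=1$, so $\alpha$ is a root of unity.

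Then I would assemble the conclusion. Each $\alpha_i$ being a root of unity gives $|\alpha_i|=1$, so all roots of $f$ lie on the unit circle. Moreover $f$ is a monic integer polynomial all of whose roots are roots of unity, so over $\mathbb Q$ it factors as $f=\prod_i g_i$ with each $g_i$ monic irreducible; each $g_i$ has a root which is a root of unity of some exact order $n_i$, and since $\Phi_{n_i}$ is irreducible over $\mathbb Q$ with $\zeta_{n_i}$ among its roots (see \eqref{definitie}), that root's minimal polynomial is $\Phi_{n_i}$, i.e.\ $g_i=\Phi_{n_i}$. Hence $f=\prod_i\Phi_{n_i}$, a product of cyclotomic polynomials, possibly with repetitions.

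The only real content is the finiteness step in the second paragraph — the pigeonhole argument on the minimal polynomials $g_k$ — and the one point there that needs a moment's care is the observation that the conjugates of $\alpha^k$ are among the $k$-th powers of the conjugates of $\alpha$, so that the uniform coefficient bound $2^d$ genuinely applies; everything else is bookkeeping with symmetric functions and with the basic facts about cyclotomic polynomials recalled in Section~\ref{preli}. An essentially equivalent alternative would be to apply the pigeonhole principle directly to the integer polynomials $f_k(x)=\prod_i(x-\alpha_i^k)$, whose coefficients are uniformly bounded by $2^d$, find $f_k=f_l$ with $k<l$, and extract the same conclusion from the resulting equality of multisets of roots.
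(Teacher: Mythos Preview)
Your proof is correct and is the standard classical argument for Kronecker's theorem. Note, however, that the paper does not actually prove this lemma: it is stated with attribution to Kronecker (1857) and a reference to \cite{Kronecker}, and is then used as a black box in the rest of Section~\ref{kronappl}. So there is no ``paper's own proof'' to compare against; you have supplied a proof where the paper simply cites the literature. Your pigeonhole argument on the minimal polynomials of the powers $\alpha^k$ (or, equivalently, on the polynomials $f_k(x)=\prod_i(x-\alpha_i^k)$) is exactly the textbook route, and the concluding identification of the irreducible factors with cyclotomic polynomials via the irreducibility of $\Phi_n$ is the right way to finish.
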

By this result  and the fact that cyclotomic polynomials are 
monic and irreducible we can factorize 
a Kronecker polynomial $f(x)$ into irreducibles as 
\begin{equation}
\label{cyclofactor}
f(x) = x^{e} \prod_{d \in {\mathcal D}}\Phi_d(x)^{e_d},
\end{equation}
with $e \ge 0$, ${\mathcal D}$ a finite set and each $e_d\ge 1$.
\begin{cor} \label{cor:reciprocity}
Let $f$ be a Kronecker polynomial with $f(0) \ne 0$. Let $k$ be such that $\Phi_1^k \parallel f$. If $k$ is even, then $f$ is self-reciprocal, otherwise $f$ is anti self-reciprocal.
\end{cor}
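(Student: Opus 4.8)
The plan is to reduce everything to the two basic reciprocity facts already recorded in the paper: $\Phi_1(x)=x-1$ is anti-self-reciprocal, and $\Phi_d(x)$ is self-reciprocal for every $d\ge 2$ (Lemma \ref{basiceqs}c together with the discussion in Section \ref{selfie}). Write $f$ in its cyclotomic factorization \eqref{cyclofactor}, so that $f(x)=x^{e}\Phi_1(x)^{k}\prod_{d\in\mathcal D,\,d\ge 2}\Phi_d(x)^{e_d}$. Since $f(0)\ne 0$ we in fact have $e=0$, so $f$ is genuinely a product of cyclotomic polynomials.

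First I would observe that the property of being self-reciprocal or anti-self-reciprocal behaves well under multiplication: if $g(x)=x^{\deg g}g(1/x)$ and $h(x)=\varepsilon\, x^{\deg h}h(1/x)$ with $\varepsilon=\pm1$, then $(gh)(x)=x^{\deg g+\deg h}\varepsilon\,(gh)(1/x)$, because degrees add. Thus a product of self-reciprocal polynomials is self-reciprocal, and multiplying in an anti-self-reciprocal factor flips the sign $\varepsilon$ exactly once. Applying this to \eqref{cyclofactor}: the product $\prod_{d\ge 2}\Phi_d(x)^{e_d}$ is self-reciprocal, and then $f=\Phi_1^{k}\cdot\big(\text{self-reciprocal}\big)$ picks up one sign flip for each of the $k$ copies of $\Phi_1$. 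Hence $f$ is self-reciprocal when $k$ is even and anti-self-reciprocal when $k$ is odd, which is the claim.

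The one genuinely careful point — and I expect it to be the only real obstacle, modest as it is — is bookkeeping of degrees: the sign-flip argument relies on $\deg(gh)=\deg g+\deg h$, which is automatic for nonzero polynomials, but one should make sure the exponent $e$ on $x$ is zero (guaranteed by $f(0)\ne 0$) so that no extra monomial factor enters, since $x^{e}$ is itself self-reciprocal only for $e=0$. With that settled, the proof is a two-line induction on the number of factors. I would present it as: apply Lemma \ref{basiceqs}c to each $\Phi_d$ with $d\ge 2$, multiply using the degree-additivity remark above to see the $d\ge 2$ part is self-reciprocal, then note that each factor $\Phi_1(x)=x-1$ satisfies $\Phi_1(x)=-x\,\Phi_1(1/x)$ and contributes a factor $-1$ to the reciprocity sign; $(-1)^{k}=1$ iff $k$ is even.
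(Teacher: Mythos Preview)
Your proof is correct and follows exactly the same approach as the paper's: the paper's proof is the single sentence ``Recall that $\Phi_1$ is anti self-reciprocal and $\Phi_d$ is self-reciprocal for $d\ge 2$,'' and you have simply spelled out the multiplicative bookkeeping (degree additivity, sign flips, and $e=0$ from $f(0)\ne 0$) that the paper leaves implicit.
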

\begin{proof}
Recall that $\Phi_1$ is anti self-reciprocal and $\Phi_d$ is self-reciprocal for $d \ge 2$.
\end{proof}
In light of Corollary \ref{cor:reciprocity} one can apply the results of Section \ref{selfie} to Kronecker polynomials.

\begin{prop} \label{evalkronecker}
Let $f$ be a Kronecker polynomial with $f(0) \ne 0$. Then\\
{\rm a)} $f(1) \ge 0$.\\
{\rm b)} If $f(1) \ne 0$, then $f(-1) \ge 0$. Furthermore, if $f(-1) > 0$, then $f(x) > 0$ for all $x \in \mathbb{R}$.
\end{prop}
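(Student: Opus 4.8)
The plan is to reduce everything to the cyclotomic factorization \eqref{cyclofactor} and to the explicit values of $\Phi_d$ at $\pm 1$ recorded in Lemmas \ref{valueat1A} and \ref{valueat-1}. First I would observe that the hypothesis $f(0)\neq 0$ forces the exponent $e$ in \eqref{cyclofactor} to vanish, so that
$f(x)=\prod_{d\in\mathcal D}\Phi_d(x)^{e_d}$ with each $e_d\ge 1$; in particular $f$ is a (monic) product of cyclotomic polynomials.

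For part a) I would simply evaluate this product at $x=1$. By Lemma \ref{valueat1A} we have $\Phi_1(1)=0$ and $\Phi_d(1)\ge 1$ for every $d\ge 2$, so every factor $\Phi_d(1)^{e_d}$ is non-negative, whence $f(1)=\prod_{d\in\mathcal D}\Phi_d(1)^{e_d}\ge 0$. For part b), assume $f(1)\neq 0$; then $1\notin\mathcal D$, since otherwise $\Phi_1(1)=0$ would appear as a factor. Thus $f(x)=\prod_{d\in\mathcal D,\,d\ge 2}\Phi_d(x)^{e_d}$, and evaluating at $x=-1$ and invoking Lemma \ref{valueat-1} (which gives $\Phi_2(-1)=0$ and $\Phi_d(-1)\ge 1$ for $d\ge 3$) shows again that each factor is non-negative, so $f(-1)\ge 0$.

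For the final assertion, suppose moreover $f(-1)>0$. Then $2\notin\mathcal D$, so every $d\in\mathcal D$ satisfies $d\ge 3$. The zeros of $\Phi_d$ are the primitive $d$-th roots of unity, and for $d\ge 3$ these are all non-real (the only real roots of unity being $\pm 1$), so $f$ has no real zero. Since $f$ is monic, its leading coefficient is positive and $f(x)\to+\infty$ as $x\to+\infty$; combined with the absence of real zeros and the continuity of $f$, the intermediate value theorem forces $f(x)>0$ for all $x\in\mathbb R$. The whole argument is essentially bookkeeping with the known values of $\Phi_d(\pm1)$; the only step needing a touch of care is this last reduction to $d\ge 3$ together with the remark that $\Phi_d$ has no real zero for $d\ge 3$, which I expect to be the sole (minor) obstacle.
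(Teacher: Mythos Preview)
Your proof is correct and follows essentially the same route as the paper: factor $f$ via \eqref{cyclofactor} (with $e=0$ since $f(0)\neq 0$), read off $f(1)\ge 0$ and $f(-1)\ge 0$ from Lemmas \ref{valueat1A} and \ref{valueat-1}, and then use that $1,2\notin\mathcal D$ forces all remaining $\Phi_d$ to have no real zeros. The only cosmetic difference is that the paper phrases the last step as ``$\Phi_n(x)>0$ for every $n>2$'' and multiplies factor by factor, whereas you bundle everything into a single monic real-root-free $f$ and invoke the intermediate value theorem; these are the same argument.
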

\begin{proof}$~$\\
{\rm a)} We have $f(1) \ge 0$ by \eqref{cyclofactor} and Lemma \ref{valueat1A}.\\
{\rm b)} If $f(1) \ne 0$, then $1 \not\in \mathcal{D}$. We 
have $\Phi_n(-1) \ge 0$ for every $n > 1$ by 
Lemma \ref{valueat-1}. Hence we obtain $f(-1) \ge 0$. Furthermore, if $f(-1) > 0$, then $2 \not\in \mathcal{D}$. Let $x \in \mathbb{R}$. We have $\Phi_n(x) > 0$ for every $n > 2$ and, consequently, $f(x) > 0$.\qedhere
\end{proof}

Using Lemma \ref{kroneckerpols} and the results of Section \ref{sec:low-order} one can obtain some information about the factorization and the values of Kronecker polynomials. 

\begin{lem} \label{lem:factors}
Let $m \in \{1,2,3,4,6\}$. 
Suppose that $f$ is of the form \eqref{cyclofactor}
and, moreover,
satisfies $\min \mathfrak{D}>m$. Then
$$|f(\xi_m)|=\prod_{d\in \mathfrak{D}\atop m|d,~\Lambda(d/m)\ne 0}
|\Phi_d(\xi_m)|^{e_d}
=\exp\big(\sum_{d\in \mathfrak{D},~m|d}e_d\Lambda(d/m)\big)\in 
\mathbb Z_{>0}.$$
\end{lem}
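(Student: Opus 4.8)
The plan is to combine the factorization \eqref{cyclofactor} with the multiplicativity of the absolute value and the evaluation of $|\Phi_d(\xi_m)|$ supplied by Lemma \ref{lem:tablecombi}. First I would write $f(\xi_m) = \xi_m^e \prod_{d\in\mathfrak D}\Phi_d(\xi_m)^{e_d}$, so that taking absolute values gives $|f(\xi_m)| = \prod_{d\in\mathfrak D}|\Phi_d(\xi_m)|^{e_d}$, since $|\xi_m|=1$. Now every $d\in\mathfrak D$ satisfies $d > m$ by the hypothesis $\min\mathfrak D > m$, so Lemma \ref{lem:tablecombi} applies to each factor: $|\Phi_d(\xi_m)| = p$ if $d/m = p^k$ is a prime power and $|\Phi_d(\xi_m)| = 1$ otherwise. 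In particular the factor contributes nothing unless $m\mid d$ \emph{and} $d/m$ is a prime power, i.e. $\Lambda(d/m)\ne 0$; this yields the first claimed equality
$$|f(\xi_m)|=\prod_{d\in\mathfrak D,~m\mid d,~\Lambda(d/m)\ne 0}|\Phi_d(\xi_m)|^{e_d}.$$

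Next I would translate the surviving factors into the von Mangoldt function. When $m\mid d$ and $d/m = p^k$, Lemma \ref{lem:tablecombi} gives $|\Phi_d(\xi_m)| = p = e^{\log p} = e^{\Lambda(d/m)}$, while when $m\mid d$ but $d/m$ is not a prime power we have $\Lambda(d/m)=0$ and $|\Phi_d(\xi_m)|=1=e^{\Lambda(d/m)}$, and when $m\nmid d$ the factor is $1$ and contributes $0$ to the exponent sum. Hence in all cases the factor from $d\in\mathfrak D$ with $m\mid d$ equals $e^{e_d\Lambda(d/m)}$, and multiplying over $d\in\mathfrak D$ with $m\mid d$ gives
$$|f(\xi_m)| = \exp\Big(\sum_{d\in\mathfrak D,~m\mid d}e_d\Lambda(d/m)\Big).$$
Finally, since each $\Lambda(d/m)$ is either $0$ or $\log p$ for a prime $p$, and each $e_d\ge 1$ is a positive integer, the exponent is a nonnegative integer combination of logarithms of primes, so its exponential is a positive integer; this gives $|f(\xi_m)|\in\mathbb Z_{>0}$.

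There is essentially no serious obstacle here: the statement is a bookkeeping consequence of Lemma \ref{lem:tablecombi}. The one point requiring a little care is making sure the index sets in the two displayed products genuinely agree — that restricting the first product to $m\mid d$ and $\Lambda(d/m)\ne 0$ loses nothing (the discarded factors are exactly $1$) and that in the exponent sum we may harmlessly include the terms with $m\mid d$ but $\Lambda(d/m)=0$ (they add $0$). I would also note explicitly that the hypothesis $\min\mathfrak D > m$ is exactly what licenses the use of Lemma \ref{lem:tablecombi}, which requires $n > m > 1$; in particular it guarantees $1,2\notin\mathfrak D$ when $m\in\{1,2\}$, so no exceptional small-$d$ values intrude.
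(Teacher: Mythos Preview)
Your proof is correct and follows exactly the route the paper indicates: the paper's proof is a single sentence saying the lemma is ``easily proved on using Lemma \ref{lem:tablecombi}'', and you have simply written out that computation in full. One tiny slip: Lemma \ref{lem:tablecombi} requires only $n>m$ with $m\in\{1,2,3,4,6\}$, not $n>m>1$; but your use of the hypothesis $\min\mathfrak{D}>m$ to invoke it is correct regardless.
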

The following result is a reformulation of the latter, but with
$\mathfrak{D}$ assumed to be unknown.
\begin{lem}
\label{reformulation}
Let $f$ be a Kronecker polynomial and $m \in \{1,2,3,4,6\}$. 
Let us also assume that 
$f(\zeta_d) \ne 0$ for every $d\le m$. Then 
$|f(\xi_m)|$ is an integer and each of its 
prime factors $q$
is contributed by a divisor $\Phi_d$ of $f$ with
$d=mq^t$ for some $t\ge 1$.
\end{lem}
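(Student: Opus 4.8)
The plan is to deduce Lemma \ref{reformulation} directly from Lemma \ref{lem:factors}, whose hypothesis on $\mathfrak D$ we must first re-establish from the weaker-looking assumption $f(\zeta_d)\ne 0$ for all $d\le m$. Write $f$ in its cyclotomic factorization \eqref{cyclofactor}, say $f(x)=x^e\prod_{d\in\mathfrak D}\Phi_d(x)^{e_d}$ with every $e_d\ge 1$. The first step is to observe that $f(\zeta_d)=0$ precisely when $d\in\mathfrak D$ (together with the irrelevant root $x=0$), so the assumption $f(\zeta_d)\ne0$ for every $d\le m$ says exactly that $\mathfrak D$ contains no integer $\le m$, i.e.\ $\min\mathfrak D>m$ (if $\mathfrak D=\varnothing$ the statement is vacuous with $|f(\xi_m)|=1$, an empty product). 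This is exactly the hypothesis needed to invoke Lemma \ref{lem:factors}.

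Next I would simply quote Lemma \ref{lem:factors}: under $\min\mathfrak D>m$ we get
$$|f(\xi_m)|=\prod_{d\in\mathfrak D,\ m\mid d,\ \Lambda(d/m)\ne0}|\Phi_d(\xi_m)|^{e_d}=\exp\Big(\sum_{d\in\mathfrak D,\ m\mid d}e_d\Lambda(d/m)\Big)\in\mathbb Z_{>0}.$$
In particular $|f(\xi_m)|$ is a positive integer, which is the first assertion. For the second assertion, note that $\Lambda(d/m)\ne0$ exactly when $d/m$ is a prime power $q^t$ with $t\ge1$, i.e.\ $d=mq^t$, and in that case $\Lambda(d/m)=\log q$, so $|\Phi_d(\xi_m)|=e^{\Lambda(d/m)}=q$ (this last evaluation is Lemma \ref{lem:tablecombi}, or can simply be read off the exponential formula). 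Hence the only factors $\Phi_d$ of $f$ that contribute a prime to $|f(\xi_m)|$ are those with $d=mq^t$, and each such factor contributes the prime $q$ (to the power $e_d$). Conversely every prime $q$ dividing $|f(\xi_m)|$ must, by unique factorization applied to the product formula above, come from at least one such divisor. This gives precisely the claim.

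I expect the whole argument to be essentially a bookkeeping translation, so there is no serious obstacle; the one point requiring a little care is the opening reduction, namely arguing cleanly that ``$f(\zeta_d)\ne0$ for all $d\le m$'' is equivalent to ``$\min\mathfrak D>m$''. Here one uses that the $\Phi_d$ are distinct irreducible polynomials with disjoint root sets $\{\zeta_d^j:(j,d)=1\}$, so a root $\zeta_d$ of $f$ forces $d\in\mathfrak D$; since $\zeta_d$ ranges over primitive $d$th roots of unity one should also note that replacing $\zeta_d$ by an arbitrary primitive $\xi_d$ does not matter, as $f\in\mathbb Z[x]$ and the conjugates $\xi_d$ are Galois-conjugate to $\zeta_d$. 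Beyond that, everything follows formally from Lemma \ref{lem:factors} and the definition of the von Mangoldt function $\Lambda$.
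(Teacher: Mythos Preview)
Your proof is correct and matches the paper's intended approach: the paper explicitly introduces Lemma~\ref{reformulation} as ``a reformulation of the latter [Lemma~\ref{lem:factors}], but with $\mathfrak D$ assumed to be unknown,'' and states that both lemmas ``are easily proved on using Lemma~\ref{lem:tablecombi}.'' Your reduction of the hypothesis $f(\zeta_d)\ne0$ for all $d\le m$ to $\min\mathfrak D>m$, followed by a direct appeal to Lemma~\ref{lem:factors} and the von~Mangoldt interpretation, is exactly this reformulation spelled out in detail.
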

These lemmas are easily proved on 
using Lemma \ref{lem:tablecombi} and weaker 
versions of them have already been applied to cyclotomic numerical semigroups \cite{CGM}.\\

\noindent {\tt Acknowledgement}. We acknowledge helpful discussions with Peter Stevenhagen 
(mathematical content, presentation) and Lola Thompson (presentation, English)
of earlier versions of this paper. In particular, Peter Stevenhagen sketched 
the third author how to compute 
the resultant of two cyclotomic polynomials by 
purely algebraic number theoretical means. In this paper we presented another proof staying
closer to the basic definitions (Theorem \ref{thm:apostol}). We also 
thank Hendrik Lenstra for pointing
out some glitches in an earlier version.\\
\indent A substantial part of this paper was written during a one month
internship in the autumn of 2016 of the second author at the Max Planck Institute for Mathematics in
Bonn. He would like to thank the third author for the opportunity given and the staff for their hospitality. 
He would also like to acknowledge the third author and Pedro A. Garc\'ia-S\'anchez for their teachings and guidance.

\medskip\noindent Bart{\l}omiej Bzd\c{e}ga  \par\noindent
{\footnotesize Faculty of Mathematics and Computer Science, Adam Mickiewicz 
University, Umultowska 87, 61-614 Poznan, Poland.\hfil\break
e-mail: {\tt exul@amu.edu.pl}}

\medskip\noindent Andr\'es Herrera-Poyatos \par\noindent
{\footnotesize Faculty of Science, University of Granada,  Avenida de la Fuente Nueva, 18071 Granada, Spain.\hfil\break
e-mail: {\tt andreshp9@gmail.com}}

\medskip\noindent Pieter Moree \par\noindent
{\footnotesize Max-Planck-Institut f\"ur Mathematik,
Vivatsgasse 7, D-53111 Bonn, Germany.\hfil\break
e-mail: {\tt moree@mpim-bonn.mpg.de}}

\end{document}